\theoremstyle{plain}
\newtheorem{Th}{Theorem}[section]
\theoremstyle{definition}
\newtheorem{Def}[Th]{Definition}
\newtheorem{corollary}[Th]{Corollary}
\newtheorem{lemma}[Th]{Lemma}
\theoremstyle{remark}
\newtheorem{remark}[Th]{Remark}
\theoremstyle{proof}
\begin{document}
\sloppy
\binoppenalty=10000
\relpenalty=10000

\title{Differential geometry in the theory of Hessian operators\footnote{The paper was supported by the RFBR grant 18-01-00472.}}
\author{N. M. Ivochkina\footnote{St.Petersburg State University of Architecture and Civil Engineering,
Russia, 190005, St.Petersburg, Vtoraya Krasnoarmeiskaya ul., 4. E-mail: ninaiv@NI1570.spb.edu.},
N. V. Filimonenkova\footnote{Peter the Great St.Petersburg Polytechnic University,
Russia, 195251, St.Petersburg, Polytechnicheskaya, 29. E-mail: nf33@yandex.ru.}}
\date{}
\maketitle

\begin{abstract}
The paper introduces a new differential-geometric system which originates from the theory of $m$-Hessian operators. The core of this system is a new notion of invariant differentiation on multidimensional surfaces. This novelty gives rise to the following absolute geometric invariants: invariant derivatives of the surface position vector, an invariant connection on a surface via subsurface, curvature matrices of a hypersurface and its normal sections, $p$-curvatures and $m$-convexity of a hypersurface, etc. Our system also produces a new interpretation of the classic geometric invariants and offers new tools to solve geometric problems. In order to expose an application of renovated geometry we deduce an a priori $C^1$-estimate for solutions to the Dirichlet problem for $m$-Hessian equations.

\vskip 0.1in
Key words: smooth surface, invariant differentiation, curvature matrix, $p$-curvature, $m$-convexity, $m$-Hessian equation, kernel of the boundary sub-barrier.
\vskip 0.1in
MSC: 53A07, 53A55, 35J66.
\end{abstract}

\section{Introduction}

The modern theory of fully nonlinear partial differential equations (FNPDE) counts roughly 35 years and was started by the papers
\cite{Ev82}, \cite{Kr83}, \cite{S83}, \cite{I83} \cite{CNS85}, etc. The main goal of this development has been to find differential operators and functional sets which admit correct setting of the Dirichlet problem. The classic example of such development has been discovered in frames of differential geometry, that is Monge -- Ampere operator and the set of convex functions (see for instance \cite{P75}).

This activity produced new algebraic and geometric structures which are per se interesting irrespective of partial differential equations. The principal absolute geometric invariant appeared in 1985 in the paper \cite{CNS85} by L.Cafarelli, L.Nirenberg and J.Spruck, Theorem 3, p.264:
\begin{Th}
The Dirichlet problem
\begin{equation}\sigma_m(\lambda(u_{xx}))=f>0 \quad in\quad \bar\Omega\subset\mathbb{R}^{n},\quad m>1,\label{cns}\end{equation}
$$u=\varphi\quad on \quad \partial\Omega$$
admits a (unique) admissible solution $u\in C^{\infty}(\bar\Omega)$ provided
\begin{equation} \partial\Omega \hbox{ is connected, and at every point } x\in\partial\Omega,\;\sigma_{m-1}(\kappa_1,\dots,\kappa_{n-1})>0.\label{bc}\end{equation}
In case $\varphi\equiv const$, condition (\ref{bc}) is also necessary for existence of a solution in $C^2(\bar\Omega)$.
\end{Th}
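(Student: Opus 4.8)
The proof is by the method of continuity, with the whole burden resting on a priori estimates. I would first recast admissibility via the G\aa rding cone $\Gamma_m=\{\lambda\in\mathbb{R}^n:\sigma_1(\lambda)>0,\dots,\sigma_m(\lambda)>0\}$, calling $u$ \emph{admissible} if $\lambda(u_{xx})\in\Gamma_m$ on $\bar\Omega$. On $\Gamma_m$ one has $\partial\sigma_m/\partial\lambda_i>0$ and $\sigma_m^{1/m}$ concave (G\aa rding), so on admissible functions the operator is elliptic and the equation is concave in $u_{xx}$; these two facts power, respectively, the linearization step and the Evans--Krylov step below. Next, embed (\ref{cns}) in a family $\sigma_m(\lambda(u^t_{xx}))=f_t>0$ on $\Omega$, $u^t=\varphi$ on $\partial\Omega$, with $f_1=f$ and $f_0$ chosen so that an admissible solution is already available (for instance, the one furnished by an explicit admissible subsolution with boundary values $\varphi$), and let $S\subseteq[0,1]$ be the set of parameters admitting an admissible solution $u^t\in C^{2,\alpha}(\bar\Omega)$. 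Openness of $S$ is the implicit function theorem: the linearized operator $v\mapsto F^{ij}(u^t_{xx})v_{ij}$, $F=\sigma_m^{1/m}$, is a genuinely elliptic linear operator (we lie strictly inside $\Gamma_m$ since $f_t>0$), hence an isomorphism $C^{2,\alpha}_0\to C^{\alpha}$. Closedness of $S$ is the content of the a priori estimates, uniform along the family; once that is granted, $S=[0,1]$, so $1\in S$, and a Schauder bootstrap promotes the solution to $C^{\infty}(\bar\Omega)$.

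For the estimates I would work up the scale. The $C^0$ bound is immediate: $\lambda(u_{xx})\in\Gamma_m\subset\Gamma_1$ gives $\Delta u>0$, so $u\le\max_{\partial\Omega}\varphi$, and the lower bound follows from comparison with a large convex admissible barrier vanishing to first order on $\partial\Omega$. Interior $C^1$ and $C^2$ bounds are Bernstein type: differentiate the equation once (resp. twice) and run a maximum principle for $|\nabla u|^2$ times a cutoff (resp. for $\sup_{|\xi|=1}u_{\xi\xi}$ times $\exp(|\nabla u|^2)$), using the concavity of $\sigma_m^{1/m}$ to absorb the bad terms. The first real use of (\ref{bc}) is the boundary $C^1$ estimate: the upper barrier is routine, but the lower barrier must be built explicitly as $\varphi$ (suitably extended) plus $A\psi$ with $\psi$ comparable to $\mathrm{dist}(\cdot,\partial\Omega)$, arranged so that near $\partial\Omega$ the Hessian of the barrier has tangential block $\approx A\psi_\nu\,\mathrm{II}$ and a large negative double-normal entry, whose eigenvalue vector sits in $\Gamma_m$ \emph{precisely because} $\sigma_{m-1}(\kappa_1,\dots,\kappa_{n-1})>0$ on $\partial\Omega$.

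The boundary $C^2$ estimate is the main obstacle. Tangential--tangential derivatives are free: differentiating $u=\varphi$ twice along $\partial\Omega$ expresses $u_{\tau\tau}(x_0)$ through $\varphi$, the already-bounded $u_\nu$, and the second fundamental form. Mixed tangential--normal derivatives $u_{\tau\nu}(x_0)$ are controlled by a barrier argument applied to a tangential derivative of $u$. The double-normal derivative $u_{\nu\nu}(x_0)$ is read off the equation itself: in a boundary-adapted frame $\sigma_m(\lambda(u_{xx}))$ is \emph{affine} in the entry $u_{\nu\nu}$ with slope equal to $\sigma_{m-1}$ of the tangential block of $u_{xx}(x_0)$, so $u_{\nu\nu}(x_0)=\big(f-(\hbox{bounded})\big)/\sigma_{m-1}(\hbox{tangential block})$, and everything turns on a uniform lower bound $\sigma_{m-1}(\hbox{tangential block})\ge c_0>0$ --- which is where (\ref{bc}) and comparison against the subsolution enter once more. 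With a full $C^2(\bar\Omega)$ bound and $f\ge f_{\min}>0$, the eigenvalues $\lambda(u^t_{xx})$ stay in a compact subset of $\Gamma_m$, so the equation is uniformly elliptic; concavity of $\sigma_m^{1/m}$ then delivers interior $C^{2,\alpha}$ (Evans--Krylov) and, with Krylov's boundary technique, global $C^{2,\alpha}(\bar\Omega)$. Uniqueness follows from the comparison principle for the concave elliptic operator $\sigma_m^{1/m}$.

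For necessity when $\varphi\equiv\mathrm{const}$, let $u\in C^2(\bar\Omega)$ be admissible. Since $\Delta u>0$, $u$ is strictly subharmonic, so by the strong maximum principle $u<\varphi$ in $\Omega$, and Hopf's lemma gives $u_\nu(x_0)>0$ at each $x_0\in\partial\Omega$ ($\nu$ the outer normal). Differentiating $u=\varphi\equiv\mathrm{const}$ twice along $\partial\Omega$ in a principal frame yields that the tangential block of $u_{xx}(x_0)$ equals $u_\nu(x_0)\,\mathrm{diag}(\kappa_1,\dots,\kappa_{n-1})$. But a standard property of G\aa rding cones --- the restriction of an admissible Hessian to any hyperplane lies in $\Gamma_{m-1}^{(n-1)}$, seen from the fact that the derivative polynomial $\partial_{a_{nn}}\sigma_m=\sigma_{m-1}(\hbox{upper block})$ has hyperbolicity cone containing $\Gamma_m$ --- forces $\sigma_{m-1}$ of that tangential block to be positive, hence $\sigma_{m-1}(\kappa_1,\dots,\kappa_{n-1})>0$ (the argument applies componentwise, so connectedness of $\partial\Omega$ is not used here). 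I expect the a priori $C^2$ boundary estimate, and within it the lower bound for $\sigma_{m-1}$ of the tangential Hessian, to be by far the hardest point; this is exactly the place where the curvature condition (\ref{bc}) is indispensable.
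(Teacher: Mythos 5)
This theorem is not proved in the paper at all: it is quoted verbatim from Caffarelli--Nirenberg--Spruck \cite{CNS85}, and the paper only re-derives fragments of the underlying machinery in its own language --- namely the boundary gradient estimate via the $m$-Hessian kernel of local sub-barriers (Lemmas 6.8, 6.10, 6.12, Theorems 6.11, 6.13) and the necessity-type statement Theorem 6.7. Your outline is the standard continuity-method proof of the CNS theorem and is essentially the correct route. Where it touches the paper's material it agrees with it: your explicit lower barrier ``$\varphi$ plus $A\psi$'' built from the distance function and condition (\ref{bc}) is exactly what the paper packages as the kernel $W^\beta$ in (\ref{W}) and Theorem 6.11; and your necessity argument (Hopf's lemma, tangential block of $u_{xx}$ equal to $u_\nu$ times the curvature diagonal, plus the fact that crossing out the normal row and column of an $m$-positive matrix leaves an $(m-1)$-positive one) is precisely the paper's Theorem 6.7 combined with its Sylvester criterion, Lemma 5.4. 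So the comparison is: you sketch the whole CNS proof, while the paper proves only two of its ingredients, in a coordinate-free geometric formalism.

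Two soft spots in your sketch deserve a flag. First, a genuinely \emph{interior} (cutoff-localized) $C^2$ Bernstein estimate is not available for $m$-Hessian equations in general ($m\geqslant3$); the correct statement, and the one CNS use, is a global maximum-principle argument for $\sup_{|\xi|=1}u_{\xi\xi}$ with an exponential gradient weight that reduces the global $C^2$ bound to the boundary $C^2$ bound --- your phrasing is ambiguous on this, and the cutoff version would fail. Second, the starting point $t=0$ of your continuity path needs an $m$-admissible function whose boundary values are exactly $\varphi$; constructing it (e.g.\ as an extension of $\varphi$ plus a large multiple of a defining function) again uses (\ref{bc}), so this is not a free step and should be stated as such. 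With these caveats your proposal is a faithful roadmap of the cited proof, with the hardest point correctly identified as the double-normal boundary estimate and the uniform positive lower bound for $\sigma_{m-1}$ of the tangential Hessian block.
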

Here $\sigma_m$ is the elementary symmetric function of order $m$, $\{\lambda_i(u_{xx})\}_1^n$ is the set of eigenvalues of the Hessian matrix $u_{xx}$, $\{\kappa_i\}_1^{n-1}$ is the set of principal curvatures of $\partial\Omega\subset\mathbb{R}^{n}$.

Notice that for $m=1$ and $m=n$ equations (\ref{cns}) are equivalent to classic Poisson and Monge -- Ampere equations which are differentiable if $u\in C^k$, $k\geqslant3$. For $1<m<n$ presentation (\ref{cns}) is rather restrictive, it does not admit differentiation of the equation. But this complication is artificial, since $\sigma_m(\lambda(u_{xx}))\equiv T_m(u_{xx})$, where $T_m$ is the $m$-trace of matrices. We qualify operators $T_m[u]=T_m(u_{xx})$ and equations $T_m[u]=f$ as $m$-Hessian.

Denote
\begin{equation}\mathbf{k}_p[\partial\Omega]=\sigma_p(\kappa_1,\kappa_2,\dots,\kappa_{n-1}),\quad p=1,\dots,n-1,\label{pk1}\end{equation}
and name $\mathbf{k}_p[\partial\Omega]$ as $p$-{\it curvature} of $\partial\Omega$. We lay $\mathbf k_0\equiv1$ by definition. As is well known, $\mathbf{k}_1[\partial\Omega]$ and $\mathbf{k}_{n-1}[\partial\Omega]$ are the mean and Gauss curvatures of the hypersurface $\partial\Omega$ respectively. Obviously,  they are $C^{k-2}$-smooth if $\partial\Omega\in C^k$, $k\geqslant2$. To extend the smoothness property to $\mathbf{k}_p[\partial\Omega]$ for $1<p<n-1$ one needs to construct some analog of the Hessian matrix for hypersurfaces.

The absolute geometric invariant (\ref{pk1}) was discovered in \cite{CNS85} and actually motivated geometric research in frames of FNPDE. This geometric research continues to bring out new tools and to set up new problems in geometry as well as in FNPDE.

In this paper we give a systematic description of these new geometric tools, integrate them in the respectively corrected classic surface geometry and demonstrate their application to problem (\ref{cns}). Implementing of this programme requires some pedantry and thus our paper keeps to a text-book style. This paper summarizes and advances results from our previous papers \cite{I90}, \cite{IYP12}, \cite{I12}, \cite{IF13}, \cite{IF14fix}, \cite{FB17}, \cite{IF15smfn}, \cite{IF16prep}.

So Section 2 describes initial notions of surface geometry starting from the very beginning, i.e., from the notions of $C^k$-smooth surface $\Gamma^n\subset\mathbb R^N$, $k\geqslant 2$, and of absolute geometric invariant.

Section 3 introduces the key notion to our new approach -- {\it invariant differentiation} onto a surface. In contrast to classic covariant derivatives, invariant derivatives do not depend on parametrization of $\Gamma^n$ and this invariance removes tensors from further consideration. The first-order invariant partial derivatives form a tangent {\it moving frame} on $\Gamma^n$. We also introduce here the notion of {\it $q$-direction} on $\Gamma^n$ as a generalization of $1$-direction, $1\leqslant q\leqslant n$. We show that a smooth embedding $\Gamma^q\subset\Gamma^n$ may be interpreted as a support of $q$-directions on $\Gamma^n$.

Section 4 introduces {\it curvature matrices} $\mathcal K[\Gamma^n]$ of some hypersurface $\Gamma^n\subset\mathbb R^{n+1}$ and establish their connection with classic notions: with the first and the second quadratic forms, the principal curvatures and the principal directions of $\Gamma^n$. We introduce also {\it normal $q$-sectional curvature matrices} as a generalization of the classic notion of normal curvatures. Such extension became possible due to the notion of {\it normal $q$-section} of $\Gamma^n$ and was a natural sequel to introduction of $q$-directions. We see this piece as one of the principal geometric novelties.

Section 5 starts with a brief survey from algebra of $m$-positive matrices which form one of G{\aa}rding cones. L.G{\aa}rding algebraic theory was outlined in 1959, \cite{G59}, and applied to FNPDE in 1985, \cite{CNS85}. It turned out to be the cornerstone of $m$-Hessian operators (\ref{cns}) study, see \cite{IYP12}, \cite{IF13}, \cite{IF14fix}, \cite{FB16}.

In Subsection 5.2 we give the definition of $p$-{\it curvatures} (\ref{pk1}) via curvature matrix as $\mathbf{k}_p[\Gamma^n]=T_m(\mathcal K[\Gamma^n])$. These $p$-curvatures generate the notion of {\it $m$-convexity} for hypersurfaces, $m=0,1,\dots,n$.
Restricted to some closed hypersurface $\Gamma^n$ the $m$-convexity reads as $\mathbf k_m[\Gamma^n]>0$. This notion provides a new stratification of hypersurfaces, starting from just smooth, $m=0$, to strongly convex, $m=n$.

One more geometric novelty is Sylvester criterion of $p$-convexity for hypersurfaces, produced in Subsection 5.3. Sylvester criterion for $m$-positive matrices was discovered rather recently by N.V.Filimonenkova, \cite{F14prep}, and we extend it to hypersurfaces via traces of $q$-sectional curvature matrices.

In Section 6 we demonstrate how the renovated geometric system works in the theory of $m$-Hessian equations. Namely, we derive an a priori estimate of $\|u\|_{C^1(\bar\Omega)}$ for equation (\ref{cns}) by barrier technique. In order to precisely indicate the origin of requirement (\ref{bc}) we introduce the notion of {\it kernel of local sub-barriers} and prove that its existence at $M_0\in\partial\Omega$ is equivalent to the $(m-1)$-convexity of $\partial\Omega$ at $M_0$. This is the first step to construct an a priori estimate of solutions in $C^2$ and to prove the classic solvability of the Dirichlet problem (\ref{cns}) via the continuity method.

The system of geometric tools described in this article is used not only in analysing the problem (\ref{cns}). The concept of $p$-curvature when appeared triggered the following new problems in the intersection of FNPDE and differential geometry. Below we outline some of them to put the system we introduce in perspective.

The first is the Dirichlet problem for $p$-curvature equation. It was studied by FNPDE methods in the papers \cite{CNS}, \cite{T90}. Right then invariant differentiation unnamed appeared as a tool to investigate $p$-curvature equations in the papers \cite{I89}, \cite{I90}. Beyond that, some evolutions of closed convex hypersurfaces were investigated by methods of tensorial differential geometry, see, for instance, \cite{Hs}, \cite{Gr}, \cite{U90}, \cite{An}. It turned out that geometric evolutions may be started as well with $m$-convex initial hypersurfaces, \cite{INT00}, \cite{I04}, \cite{I07}.

Notice that the above geometric activity leaves unattended generalized Minkowski problem. The latter was set up in the book of A.V.Pogorelov, \cite{P75}, Section 4, and in our terminology reads as:

to find restrictions on a given function $\varphi_m=\varphi_m(\xi)$, $|\xi|=1$, such that there exists a closed $C^k$-smooth, $k>2$, convex  hypersurface $\Gamma^n$ satisfying equation
\begin{equation}\mathbf k_{n;n-m}[\Gamma^n]=\frac{\mathbf k_n}{\mathbf k_{n-m}}[\Gamma^n](M)=\frac{1}{\varphi_m(\mathbf n[\Gamma^n])(M)},\quad M\in\Gamma^n,\quad1\leqslant m\leqslant n,\label{Mp}\end{equation}
where $\mathbf n[\Gamma^n](M)$ is the unit normal to $\Gamma^n$.

If $m=n=2$, (\ref{Mp}) coincides with classic Minkowski problem: to reconstruct a closed hypersurface by given Gauss curvature. For $n>2$ A.V.Pogorelov names (\ref{Mp}) as {\it multidimensional} Minkowski problem and presents necessary and sufficient conditions of its solvability in Theorem 1, Section 3, \cite{P75}.  Theorem 2 from Section 4, \cite{P75}, serves the case $m<n$ and it contains two assumptions: the first is necessary for solvability of problem (\ref{Mp}), while the second is sufficient but by far not necessary.

Up to 1975 in differential geometry only convex closed hypersurfaces had been considered. Then it was common to formulate problems in terms of curvature radius and support functions as in \cite{P75}. This is the reason for functions $\varphi_m$ in (\ref{Mp}) to be defined over the unit sphere.

Now it looks natural to reset Minkowski problem for $m$-convex hypersurfaces and to consider, for instance, the following version:

to find restrictions on a given function $f_m=f_m(M)$, $M\in\mathbb R^{n+1}$, such that there exists a closed $C^k$-smooth, $k>2$, $m$-convex hypersurface $\Gamma^n$ satisfying equation
\begin{equation}\mathbf k_m[\Gamma^n](M)=f_m(M)\geqslant\nu>0,\quad M\in\Gamma^n,\quad1\leqslant m\leqslant n.\label{Mpm}\end{equation}

Notice that if $m=n$, $f_n(M)=1/\varphi_n(\mathbf n[\Gamma^n])(M)$, problem (\ref{Mpm}) is equivalent to multidimensional Minkowski problem (\ref{Mp}) solved in \cite{P75}. For $1\leqslant m<n$ the problem (\ref{Mpm}) does not coincide with (\ref{Mp}), which makes unnatural the requirement of convexity of solutions.

We believe that geometric system we present in this paper together with its future extension are necessary to face this challenge.
\vskip 0.1in

The following standard notations are used throughout the paper:

\noindent
$B_r(M_0)$ -- $r$-radius ball centered at $M_0$;

\noindent
${\mathbf 0}=(0,0,\ldots,0)\in\mathbb R^n$; $I$ -- unit matrix;

\noindent
$(x,y)$ -- scalar product of vectors $x,y\in \mathbb R^n$; $x^2=(x,x)$; $|x|=\sqrt{(x,x)}$;

\noindent
$x\times y$ -- outer product of vectors $x,y\in \mathbb R^n$;

\noindent
$C=(c^i_{j})$, $i=1,\dots, p$, $j=1,\dots, n$, -- $(p\times n)$-matrix with elements $c^i_{j}$;

\noindent
${\rm Sym}(n)$ -- linear space of symmetric $(n\times n)$-matrices $S=(s_{ij})$;

\noindent
${\rm O}(n)$ -- group of orthogonal $(n\times n)$-matrices;

\noindent
$\theta_\xi=\left({\partial\theta^i}/{\partial\xi^j}\right)$ -- gradient of vector-valued function $\theta(\xi)\in \mathbb R^n$, $\xi\in\mathbb R^p$, i.e., the Jacobian matrix;

\noindent
${\rm Span}\{l_i\}_1^q$ -- linear span of vectors $l_i\in \mathbb R^n$, $i=1,\dots,q$.

Latin letters $i$, $j$, $k$, $l$, $p$, $m$, $n$, $N$ notate positive integers and summation over repeating upper and lower indices is always assumed.

\section{On surfaces in Euclidean space}

Consider a set $\Gamma^n$ in $N$-dimensional Euclidean space $\mathbb E^N$.

\begin{Def}
Let $1\leqslant n<N$. Some connected set $\Gamma^n\subset \mathbb E^N$ is called a {\it surface of codimension $N-n$} if for every point $M_0\in\Gamma^n$ there are $r>0$, a domain $\Theta\subset \mathbb E^n$ and a homeomorphism $\Theta\rightarrow\Gamma^n\cap B_r(M_0)$.
We call the mapping $\Theta\rightarrow\Gamma^n\cap B_r(M_0)$ a {\it local parametrization} of the surface $\Gamma^n$ in a neighborhood of $M_0$.
\end{Def}
Notice that under Definition 2.1 the sets $\{\Gamma^1\}$, $\{\Gamma^{N-1}\}$ are {\it curves} and {\it hypersurfaces} respectively.

Now we install in $\mathbb E^N$ and $\mathbb E^n$ some cartesian coordinate systems, turn to arithmetic Euclidian spaces $\mathbb R^N$ and $\mathbb R^n$ and fix up  a point $M\in\Gamma^n\cap B_r(M_0)$ by its position vector $X[\Gamma^n](M)=X(\theta)\in\mathbb R^N$:
\begin{equation}X(\theta)=\left(\matrix{x^1(\theta)\cr x^2(\theta)\cr\ldots \cr x^N(\theta)\cr}\right), \quad \theta=\left(\matrix{\theta^1\cr\theta^2\cr\ldots\cr\theta^n}\right)\in\Theta\subset\mathbb R^n.\label{radius}\end{equation}
Assume that $X(\theta)$ is $C^1$-smooth. Introduce its gradient as the Jacobian $(N\times n)$-matrix:
$$X_\theta=\left(\frac{\partial x^i}{\partial\theta^j}\right),\quad i=1,\dots,N,\quad j=1,\dots,n.$$
Denote the columns of the matrix $X_\theta$ by $X_j$ :
\begin{equation}X_j=\frac{\partial X}{\partial\theta^j}=\left(\frac{\partial x^1}{\partial\theta^j}, \frac{\partial x^2}{\partial\theta^j},\ldots , \frac{\partial x^N}{\partial\theta^j}\right)^T,\quad j=1,\ldots,n.\label{grad}\end{equation}
It is well known that the vectors $X_j(M)$ are tangent to the surface $\Gamma^n$ at the point $M$. By Definition 2.1 the mapping (\ref{radius}) is a local parametrization of the surface $\Gamma^n$ if
\begin{equation}\det X_\theta^T X_\theta\neq0.\label{nonded}\end{equation}
It means that at each point of the surface vectors $\{X_j\}_ 1^n$ are linearly independent and may be interpreted as a basis in the tangent $n$-plane, although non-cartesian in general.

\begin{Def}
A surface $\Gamma^{n}\subset\mathbb R^N$ is $C^k$-{\it smooth} if it admits a $C^k$-smooth parametrization (\ref{radius}), (\ref{nonded}) in a neighborhood of each point.
\end{Def}
In this paper only $C^k$-smooth surfaces are under consideration, $k\geqslant 2$.

The existence of at least one $C^k$-smooth parametrization (\ref{radius}) carries out the infinity of such. Indeed, any $C^k$-smooth non-identical mapping
$$\xi=(\xi^1,\xi^2,\dots,\xi^n)^T\in\Xi\subset\mathbb R^n\rightarrow\theta=(\theta^1,\theta^2,\dots,\theta^n)^T\in\Theta\subset\mathbb R^n$$
with $\det\theta_{\xi}\neq0$ presents a new parametrization. We divide  the set of all parametrizations into two classes of equivalence by the sign of $\det\theta_{\xi}$.

\begin{remark} In this paper all characteristics of $\Gamma^n$ are defined mostly in some parametrized neighborhood of a given point $M_0$, which means, strictly speaking, they are characteristics of $\Gamma^n\cap B_r(M_0)$. However, we assign them to the entire surface $\Gamma^n$, assuming the non-degenerate preserving admissibility correspondence between local parametrizations in intersecting neighborhoods.
\end{remark}

Classic differential geometry implies tensorial approach. Here we keep to absolute geometric invariants of surfaces in the following sense.

\begin{Def}
A characteristic of the surface $\Gamma^n$ is an {\it absolute geometric  invariant} if it does not depend on the choice of local parametrization, i.e., it is the same for all equivalent parametrizations.
\end{Def}

For instance, the dimension $n$ of $\Gamma^n$ is an absolute geometric invariant. The property of linear independence of the tangent vectors $X_j$ is an absolute geometric invariant, while these vectors are not. Also, the set of tangent $n$-planes $\{\mathcal L^n(M), M\in\Gamma^n\}$:
\begin{equation}\mathcal L^n[\Gamma^n](M)={\rm Span}\{X_{j}[\Gamma^n](M)\}_1^n\label{tP}\end{equation}
is an absolute geometric invariant.

The main goal of this paper is to introduce some collection of new absolute geometric invariants and to describe the connection of those with classic analogs.

\section{Invariant differentiation}

Describe for the start some properties of the metric tensor. The notion of ``metric tensor'' appeared in frames of Riemannian geometry. Since smooth Riemannian manifolds may be interpreted as surfaces in Euclidean space of sufficiently large dimension, it is natural to extend this notion to surfaces.

\begin{Def}
Let $X(\theta)$ be a parametrization of the surface $\Gamma^n$ in the neighborhood of a given point, $X_\theta$ be the gradient (\ref{grad}). The {\it metric tensor} of this surface is the symmetric $(n\times n)$-matrix
\begin{equation}g[\Gamma^n](\theta)=X_\theta^T X_\theta,\quad g_{ij}=(X_i,X_j),\quad i,j=1,\dots,n.\label{metr}\end{equation}
\end{Def}

Since $g$ is the Gram matrix of linearly independent vectors $\{X_j\}_{1}^n$, the determinant of $g$ is positive and equals to the square of the volume of $n$-dimensional parallelepiped with these vectors as ribs.

In frames of  FNPDE there appeared the algebraic notion of $p$-traces.
\begin{Def}
Let $S\in{\rm Sym}(n)$, $1\leqslant p\leqslant n$. The sum of all $p$-s order principal minors of $\det S$ is called $p$-{\it trace} of $S$ and denoted by $T_p(S)$. By definition $T_0(S)\equiv 1$.
\end{Def}
In particular, $T_1(S)={\rm tr}S$, $T_n(S)=\det S$.
The $p$-traces of the metric tensor have a rather simple geometric interpretation.
Namely, let $1\leqslant i_1<\ldots<i_p\leqslant n$ be an arbitrary set of integers. Denote by $V_{i_1\dots i_p}$ the volume of the parallelepiped with ribs $X_{i_1},\dots, X_{i_p}$. Then
$$T_p(g)=\sum_{i_1<\dots<i_p}V^2_{i_1\dots i_p},\quad  p=1,\dots, n.$$
Notice that the metric tensor and its $p$-traces are not invariant in the sense of Definition 2.4.

\begin{Def} Let $\Gamma^n$ be a $C^k$-smooth surface parametrized by $\theta\in\Theta\subset\mathbb R^n$ and $\Gamma^q\subset\Gamma^n$, $1\leqslant q\leqslant n$, be a $C^k$-smooth surface parametrized by $\xi\in\Xi\subset\mathbb R^q$.
We say that $\Gamma^q$ is a {\it smooth $q$-embedding} into $\Gamma^n$ if there exists a $C^k$-smooth mapping $\theta=\theta(\xi)$ such that
$\det\theta^T_\xi\theta_\xi\neq0$ and $X[\Gamma^q](\xi)=X[\Gamma^n](\theta(\xi))$, $\xi\in\Xi$.
\end{Def}

The metric tensor of some embedded surface $\Gamma^q\subset\Gamma^n$ is determined by the metric tensor of the surface $\Gamma^n$:
\begin{equation}X_{\xi}=X_{\theta}\theta_{\xi},\quad g[\Gamma^q](\xi)=\theta_{\xi}^Tg[\Gamma^n](\theta)\theta_{\xi}.\label{zam}\end{equation}
In the case $q=n$ formula (\ref{zam}) just registers the tensorial nature of $X_{\theta}[\Gamma^n]$, $g[\Gamma^n]$ and means that they are ones and twice covariant tensors respectively.

A key role in our differential-geometric development belongs to the $(n\times n)$-matrices $\tau$, defined by two requirements:
\begin{enumerate}
\item[1)] $\tau=\tau[\Gamma^n]$ is connected with the metric tensor by equality
\begin{equation}g^{-1}[\Gamma^n]=\tau[\Gamma^n]\tau^T[\Gamma^n]\label{tau1};\end{equation}
\item[2)]  if $X(\theta)$, $X(\xi)$ are local parametrizations of $\Gamma^n$, then
\begin{equation}\tau[\Gamma^n](\xi)=\theta_{\xi}^{-1}\tau[\Gamma^n](\theta(\xi)).\label{tau2}\end{equation}
\end{enumerate}

It is obvious that $\tau_0[\Gamma^n]=\sqrt{g^{-1}[\Gamma^n]}\in{\rm Sym}(n)$ satisfies the requirements (\ref{tau1}), (\ref{tau2}) and an arbitrary matrix from the set
\begin{equation}\tau[\Gamma^n](M)=\tau_0[\Gamma^n](M)B,\quad B\in {\rm O}(n),\label{tau3}\end{equation}
also does. Matrices $B$ in (\ref{tau3}) do not depend on parametrization and provide degree of freedom when choosing $\tau =(\tau^i_j)$. Generally speaking, matrices $B$ can depend on points $M\in\Gamma^n$ but in this paper they are constant. Further on we assume that the choice of $B$ has been made and our matrix $\tau[\Gamma^n]$ has been fixed by (\ref{tau3}).

Notice that $(g\tau_i,\tau_j)[\Gamma^n]=\delta_{ij}$, $\tau_j=(\tau^1_j,\tau^2_j,\ldots\tau^n_j)^T$, $j=1,\dots,n$.

\begin{figure}
	\center{\includegraphics[width=8cm]{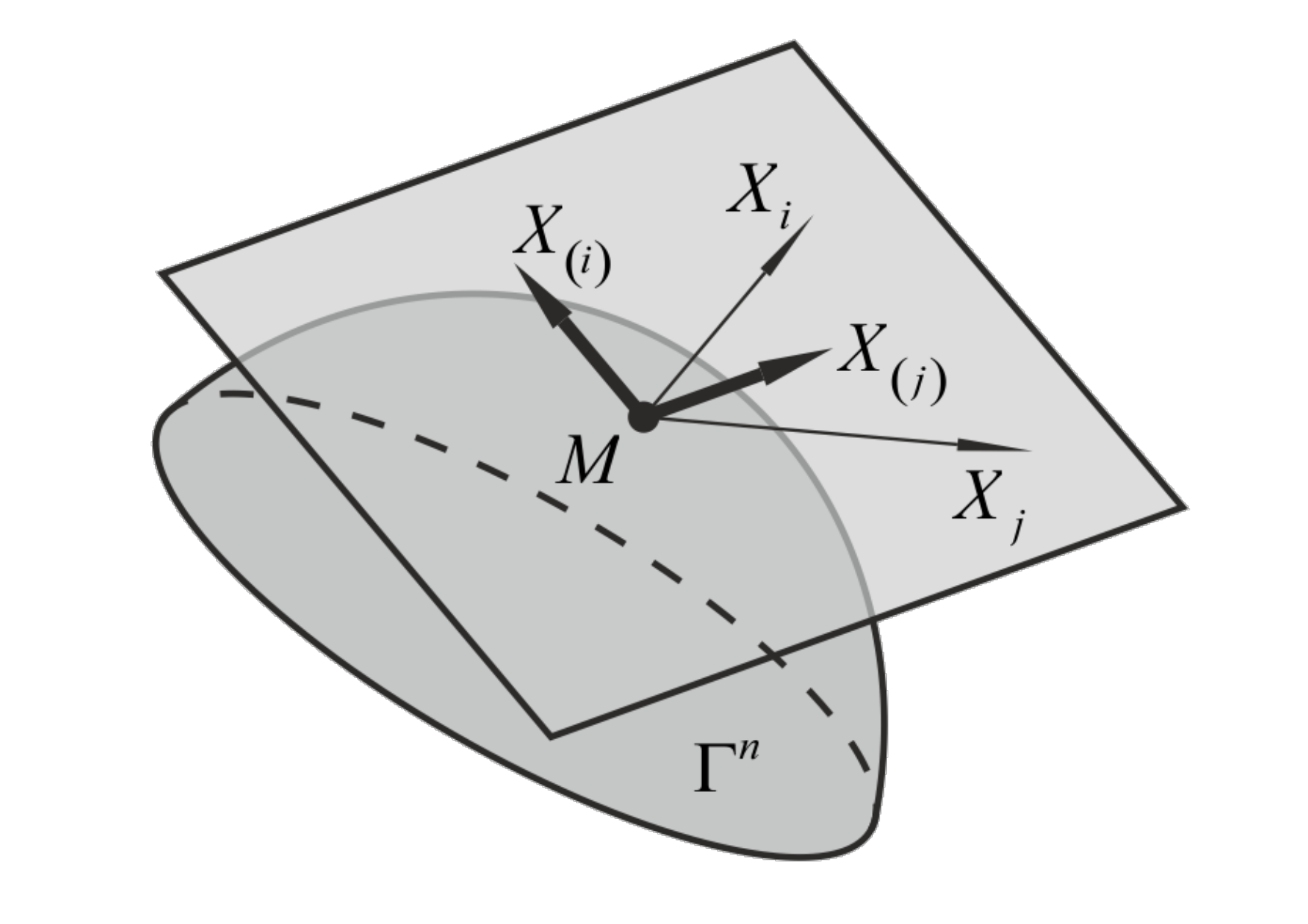} \\ Fig. 1}
\end{figure}

Now we introduce the notion of invariant derivatives by the following definition.
\begin{Def}
We say that the vectors given by the formula
\begin{equation}X_{(j)}=X_k\tau^k_j,\quad j=1,\dots,n,\label{j}\end{equation}
are the first-order {\it invariant partial derivatives} and
the set $\{X_{(j)}(M)\}_{j}^n$, $M\in\Gamma^n$, is a {\it moving frame} on the surface $\Gamma^n$.
We also say that $(N\times n)$-matrix $X_{(\theta)}=X_\theta\tau(\theta)$ is the geometric invariant gradient of the position vector $X[\Gamma^n](M)$.
\end{Def}

It follows from (\ref{j}) that every vector $X_{(j)}$ is a linear combinations of the tangent vectors $\{X_k\}_{1}^n$ and belongs to the tangent $n$-plane (\ref{tP}) (see Fig.1).

Notice that agreement (\ref{tau2}) provides the absolute geometric invariance of $X_{(\theta)}$.

Since $(X_{(i)},X_{(j)})=\delta_{ij}$, the moving frame $\{X_{(j)}(M)\}_{1}^n$ is a cartesian basis along $\Gamma^n$. It smoothly depends on $M\in\Gamma^n$, and one is free to rotate this basis in the tangent planes (\ref{tP}) via sufficiently smooth matrices $B$ in (\ref{tau3}).

Now it is possible to produce the following geometric invariant description of the tangent planes (\ref{tP}):
\begin{equation}\mathcal L^n[\Gamma^n](M)={\rm Span}\{X_{(j)}[\Gamma^n](M)\}_1^n,\quad M\in\Gamma^n.\label{IL}\end{equation}
Denote by $\eta$ some $(n\times q)$-matrix, $1\leqslant q\leqslant n$, such that $\eta^T\eta=I\in{\rm Sym}(q)$.
Introduce a $q$-dimensional sub-plane of (\ref{IL}) by relation:
\begin{equation}\mathcal L^q_\eta[\Gamma^n](M)={\rm Span}\{l_i[\Gamma^n](M)\}_1^q,\quad l_i[\Gamma^n]=X_{(k)}[\Gamma^n]\eta^k_i,\quad i=1,\dots,q.\label{qL}\end{equation}

\begin{Def}
Let $1\leqslant q\leqslant n$. We say that the plane $\mathcal L^q_\eta[\Gamma^n](M)$ from (\ref{qL}) is a {\it $q$-direction} on the surface $\Gamma^n$ at $M$ and the matrix $\eta$ is a {\it cartesian allocator} of this $q$-direction.
\end{Def}

\begin{remark}
For each $q$-direction its cartesian allocator $\eta$ is not uniquely determined. The whole set $\{\eta_0B, B\in {\rm O}(q)\}$ consists of cartesian allocators if $\eta_0$ is one of them.
\end{remark}

Consider two classic examples:
\begin{enumerate}
\item[1)] $1$-direction on $\Gamma^n$ at $M$ is a tangent straight line which consists of all vectors collinear to $l=X_{(k)}\eta^k$, where $\eta\in\mathbb R^n$, $|\eta|=1$, is its cartesian allocator;

\item[2)] $n$-direction on $\Gamma^n$ at $M$ is the whole tangent plane (\ref{IL}) and an arbitrary orthogonal $(n\times n)$-matrix $\eta$ may be taken as its cartesian allocator.
\end{enumerate}

Show now that a smooth $q$-embedding $\Gamma^q\subset\Gamma^n$, see Definition 3.3, may be interpreted as a support of $q$-directions on $\Gamma^n$. Indeed, the moving frame $\{X_{(j)}[\Gamma^q](\xi)\}_{1}^q$ onto $\Gamma^q$ connected with the moving frame $\{X_{(j)}[\Gamma^n](\theta(\xi))\}_{1}^n$ onto $\Gamma^n$ by the equality
\begin{equation}X_{(\xi)}[\Gamma^q](\xi)=\left(X_{(\theta)}[\Gamma^n]\zeta\right)(\xi),\quad\zeta(\xi)=\left(\tau^{-1}[\Gamma^n]\theta_\xi \tau[\Gamma^q]\right)(\xi),\quad\xi\in\Xi. \label{dz}\end{equation}
The $(n\times q)$-matrix $\zeta$ is an absolute geometric invariant, which is a good reason to specify it.
\begin{Def}
We say that the $(n\times q)$-matrix $\zeta$ from (\ref{dz}) is the {\it $q$-connection} on the surface $\Gamma^n$ via $\Gamma^q$.
\end{Def}
Our $q$-connection generates a smooth field of $q$-directions over $\Gamma^n$ in the following sense.

\begin{lemma}
Let $M\in\Gamma^q\subset\Gamma^n$. Then $\zeta(M)$ is a smooth cartesian allocator of $q$-direction given by the tangent plane of $\Gamma^q$ at $M$.
\end{lemma}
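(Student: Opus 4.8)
The plan is to verify directly that $\zeta(M)$ from \eqref{dz} satisfies the two defining requirements of a cartesian allocator given in Definition 3.7: that it is an $(n\times q)$-matrix with $\zeta^T\zeta=I\in{\rm Sym}(q)$, and that the plane ${\rm Span}\{X_{(k)}[\Gamma^n]\zeta^k_i\}_1^q$ coincides with the tangent plane $\mathcal L^q[\Gamma^q](M)$ of $\Gamma^q$ at $M$. Smoothness is immediate: $\tau[\Gamma^n]$, $\theta_\xi$, and $\tau[\Gamma^q]$ are all $C^{k-1}$ by hypothesis (the surfaces are $C^k$ and $\theta(\xi)$ is $C^k$), so $\zeta=\tau^{-1}[\Gamma^n]\theta_\xi\tau[\Gamma^q]$ depends smoothly on $M$.

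For the orthonormality, I would compute $\zeta^T\zeta$ using \eqref{tau1} applied to both surfaces together with the tensor law \eqref{zam}. Writing $\zeta^T\zeta=\tau^T[\Gamma^q]\,\theta_\xi^T\,(\tau^{-1}[\Gamma^n])^T\tau^{-1}[\Gamma^n]\,\theta_\xi\,\tau[\Gamma^q]$, the middle factor $(\tau^{-1}[\Gamma^n])^T\tau^{-1}[\Gamma^n]=(\tau[\Gamma^n]\tau^T[\Gamma^n])^{-1}=g[\Gamma^n]$ by \eqref{tau1}; then $\theta_\xi^T g[\Gamma^n]\theta_\xi=g[\Gamma^q]$ by \eqref{zam}, and finally $\tau^T[\Gamma^q]\,g[\Gamma^q]\,\tau[\Gamma^q]=\tau^T[\Gamma^q](\tau^T[\Gamma^q])^{-1}\tau^{-1}[\Gamma^q]\tau[\Gamma^q]=I$, again by \eqref{tau1}. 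So $\zeta$ is indeed of the form required of a cartesian allocator, and by Remark 3.8 the freedom in the orthogonal factor $B\in{\rm O}(q)$ is exactly the freedom in the choice of $\tau[\Gamma^q]$.

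For the span assertion, I would start from \eqref{dz} itself: the columns of $X_{(\xi)}[\Gamma^q]$ are precisely the vectors $l_i[\Gamma^n]=X_{(k)}[\Gamma^n]\zeta^k_i$ appearing in \eqref{qL} with allocator $\eta=\zeta$. On the other hand, $\{X_{(j)}[\Gamma^q](\xi)\}_1^q$ is by Definition 3.5 the moving frame on $\Gamma^q$, hence a cartesian basis of the tangent plane $\mathcal L^q[\Gamma^q](M)={\rm Span}\{X_{(j)}[\Gamma^q](M)\}_1^q$ as described after \eqref{IL}. Therefore the $q$-direction $\mathcal L^q_\zeta[\Gamma^n](M)$ determined by $\zeta$ equals the tangent plane of $\Gamma^q$ at $M$, which is what Definition 3.6 calls the $q$-direction on $\Gamma^n$ at $M$ supported by $\Gamma^q$. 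The only genuine point requiring care — and the one I would treat as the main obstacle — is establishing the identity \eqref{dz} in the first place: one must apply the definition \eqref{j} of invariant derivatives on $\Gamma^q$, substitute $X_\xi=X_\theta\theta_\xi$ from \eqref{zam}, insert $X_\theta=X_{(\theta)}\tau^{-1}[\Gamma^n]$ from Definition 3.5, and regroup to isolate $\zeta=\tau^{-1}[\Gamma^n]\theta_\xi\tau[\Gamma^q]$; everything after that is the bookkeeping above. If \eqref{dz} is taken as already proved in the text preceding the lemma, then the lemma reduces to the two verifications I described, neither of which presents a real difficulty.
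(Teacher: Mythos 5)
Your proposal is correct and takes essentially the same route as the paper: its proof likewise reads the identity $\zeta^T\zeta=I\in{\rm Sym}(q)$ directly off (\ref{dz}) (you merely spell out that computation via (\ref{tau1}) and (\ref{zam})) and then identifies $\mathcal L^q[\Gamma^q](M)={\rm Span}\{X_{(i)}[\Gamma^q](M)\}_1^q$, with $X_{(i)}[\Gamma^q]=X_{(k)}[\Gamma^n]\zeta^k_i$, as the $q$-direction $\mathcal L^q_\eta[\Gamma^n](M)$ with $\eta=\zeta(M)$, exactly as you do. The only blemishes are mislabeled references (the cartesian allocator and $q$-direction are Definition 3.5, the moving frame is Definition 3.4, and the non-uniqueness of allocators is Remark 3.6, not 3.7/3.8), which do not affect the argument.
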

\begin{proof}
Relation (\ref{dz}) brings out the identity $\zeta^T\zeta=I\in{\rm Sym}(q)$ for all $M\in\Gamma^q$.
The tangent plane $\mathcal L^q[\Gamma^q](M)$ may be considered as a sub-plane of $\mathcal L^n[\Gamma^n](M)$. Due to (\ref{dz})
$$\mathcal L^q[\Gamma^q](M)={\rm Span}\{X_{(i)}[\Gamma^q](M)\}_1^q,\quad X_{(i)}[\Gamma^q]=X_{(k)}[\Gamma^n]\zeta^k_i.$$
Therefore, $\mathcal L^q[\Gamma^q](M)=\mathcal L^q_\eta[\Gamma^n](M)$ from Definition 3.5 with $\eta=\zeta(M)$.
\end{proof}

Formula (\ref{j}) admits a natural extension to definition of higher-order invariant derivatives. In further proceeding we make use of notation $X_{\theta\theta}$ for $(N\times n\times n)$-array of the second derivatives of the position vector $X[\Gamma^n]=X(\theta)$. The second-order invariant partial derivatives are to be read as
\begin{equation}X_{(i)(j)}=(X_{(i)})_{(j)}=X_{kl}\tau^k_i\tau^l_j+X_k(\tau^k_i)_l\tau^l_j,\quad i,j=1,\dots,n.\label{ij}\end{equation}
They are noncommutative in general, but commutative up to the tangential term.
It follows from (\ref{ij}) that the derivatives
\begin{equation} X_{(ij)}=X_{kl}\tau^k_i\tau^l_j,\quad X_{(\theta\theta)}=\left(X_{(ij)}\right)_{i,j=1}^n=\tau^TX_{\theta\theta}\tau.\label{cij}\end{equation}
are commutative but not invariant.
Notice that commutators of derivatives (\ref{ij}) are absolute geometric invariants, although geometric meaning of those awaits for attention of specialists.

\section{Curvature matrix}

\subsection{Definition of curvature matrix}

In this Section we consider surfaces of codimension 1, i.e., appoint $N=n+1$. These are known as hypersurfaces.

At each point $M$ of some $C^k$-smooth hypersurface $\Gamma^{n}\subset\mathbb R^{n+1}$ there exist two unit normals $\mathbf n^+[\Gamma^n](M)$, $\mathbf n^-[\Gamma^n](M)$, $\mathbf n^+=-\mathbf n^-$. They are geometric invariants. If $X[\Gamma^n](\theta)$, $\theta\in\Theta$, is a local parametrization, the unit normals may be defined via vector-product (exterior product) of tangential vectors:
\begin{equation}\mathbf n[\Gamma^n]=\frac{[X_1,X_2,\dots,X_{n}]}{\left|[X_1,X_2,\dots,X_{n}]\right|}=
[X_{(1)},X_{(2)},\dots,X_{(n)}].\label{n}\end{equation}
It should be reminded that our local parametrizations are divided into two classes of equivalence (see Section 2) and vector (\ref{n}) changes direction to the opposite when passing from one class to the other. That is why the normal vector (\ref{n}) may be considered as an indicator of orientation and makes possible to differ the sides of $\Gamma^{n}$ in ambient space at least locally.

The further differential-geometric notions are related to the $C^k$-smooth, $k\geqslant 2$, two-sided hypersurface $\Gamma^{n}$ oriented by a preferable normal $\mathbf{n}^+=\mathbf{n}^+[\Gamma^n]$. For instance if $\Gamma^{n}$ bounds a domain, $\mathbf{n}^+[\Gamma^n]$ is the interior normal, i.e., it looks into the domain.

For $\Gamma^{n}$ we have the second-order invariant partial derivatives (\ref{ij}) introduced via matrix (\ref{tau3}).

\begin{Def}
We say that the symmetric $(n\times n)$-matrix
\begin{equation}\mathcal K[\Gamma^n]=(X_{(\theta)(\theta)},\mathbf n^+)[\Gamma^n],\quad\mathcal K_{ij}=(X_{(i)(j)},\mathbf n^+),\quad i,j=1,\dots, n,\label{cm}\end{equation}
is the {\it curvature matrix} of $\Gamma^{n}$.
\end{Def}

The symmetry of the curvature matrix is crucial in our applications.

Due to presentation (\ref{ij}), (\ref{cij}) we deduce working formulas for $\mathcal K$:
\begin{equation}\mathcal K[\Gamma^n]=(X_{(\theta\theta)},\mathbf n^+)[\Gamma^n]=\tau^T(X_{\theta\theta},\mathbf n^+)\tau,\label{cms}\end{equation}
$$\mathcal K_{ij}=(X_{(ij)},\mathbf n^+)=(X_{kl},\mathbf n^+)\tau_i^k\tau_j^l,\quad i,j=1,\dots, n.$$

\begin{remark}
For $C^k$-smooth hypersurfaces the metric tensor is $C^{k-1}$-smooth in $\theta$ by definition. Therefore the matrix $\tau_0[\Gamma^n]=\sqrt{g^{-1}[\Gamma^n]}$ is also $C^{k-1}$-smooth, while the curvature matrix with $\tau=\tau_0$ is $C^{k-2}$-smooth in $M\in\Gamma^n$. But in (\ref{cms}) we can take an arbitrary matrix $\tau$ from the family (\ref{tau3}) and Definition 4.1 sets actually the family of curvature matrices
$$\mathcal K=B^T\mathcal K_0[\Gamma^n]B,\quad B\in{\rm O}(n),$$
where $\mathcal K_0$ corresponds to $\tau_0$. The smoothness of these matrices depends, in addition, on the smoothness of $B=B(M)$. For instance, one may choose $B=B(M)$ so that the curvature matrix is diagonal for all $M\in\Gamma^n\cap B_r(M_0)$. However, in this case $\mathcal K[\Gamma^n](M)$ is not more than Lipschitz continuous in general.
\end{remark}

It follows from (\ref{cm}) that curvature matrices are new geometric invariants of $\Gamma^n\subset\mathbb R^{n+1}$ in the sense of Definition 2.4. In addition, they are invariant under orthogonal transformations of the ambient space in view of identity $\tau^T(X_{\theta\theta},\mathbf n^+)\tau=\tau^T(BX_{\theta\theta},B\mathbf n^+)\tau$, $B\in{\rm O}(n+1)$.

Let us try Definition 4.1 for the case $n=1$. Consider a plane curve $\Gamma^{1}\subset\mathbb R^{2}$ and its local parametrization
$$X(\theta)=\left(\matrix{x^1(\theta)\cr x^2(\theta)\cr}\right), \quad \theta\in\Theta\subset\mathbb R^1.$$
In this degenerated case the ``metric tensor'' reads as
$$X_{\theta}=\left(\matrix{x^1_{\theta}\cr x^2_{\theta}\cr}\right)\quad\Rightarrow\quad g=X^T_{\theta}X_{\theta}=|X_{\theta}|^2=(x^1_{\theta})^2+(x^2_{\theta})^2\quad\Rightarrow\quad
\tau_0=\sqrt{g^{-1}}=\frac{1}{|X_{\theta}|}.$$
Obviously, the vector
$$\mathbf n^+=\frac{1}{|X_{\theta}|}\left(\matrix{x^1_{\theta}\cr -x^2_{\theta}\cr}\right)$$
is a unit normal for our curve. Keeping in mind that Definition 4.1 is applicable to the oriented curves, we assume the above unit normal provides a desirable orientation. Then formula (\ref{cms}) leads to
\begin{equation}\mathcal K[\Gamma^1]=\quad\frac{(X_{\theta\theta},\mathbf n^+)}{|X_{\theta}|^2}=\frac{x^1_{\theta\theta}x^2_{\theta}-x^2_{\theta\theta}x^1_{\theta}}{|X_{\theta}|^3}.\label{kriv}\end{equation}
On the other hand, this line is equivalent to
\begin{equation}\mathcal K[\Gamma^1]=\frac{(d^2X,\mathbf n^+)}{(dX,dX)}=\kappa[\Gamma^1],\quad dX=X^\prime d\theta,\quad d^2X=X^{\prime\prime}(d\theta)^2.\label{krd}\end{equation}
Thus, the curvature matrix of a plane curve is a number which is natural to call the {\it curvature} $\kappa[\Gamma^1]$. It almost coincides with the classic definition of curvature of a plane curve but the sign. Namely, in order to avoid the problem of orientation, the classic approach sets the absolute value of $\kappa[\Gamma^1]$ as the curvature. It looks somewhat unnatural. One would better give a thought to the orientation of the curve of his and use definition (\ref{kriv}).

\subsection{Curvature matrix for a graph}

Let a hypersurface $\Gamma^{n}\subset\mathbb{R}^{n+1}$ be the graph of some function
$$x^{n+1}=\omega(x),\quad x=(x^1,x^2,\ldots,x^n)^T\in\Omega\subset\mathbb R^n.$$
Then the position vector reads as
\begin{equation}X[\Gamma^{n}]=\left(\matrix{x\cr \omega(x)\cr}\right).\label{yav}\end{equation}
Introduce the following notations:
$$\omega_{i}=\frac{\partial\omega}{\partial x^i},\quad\omega_{kl}=\frac{\partial^2\omega}{\partial x^k\partial x^l},$$
$$\omega_x=\nabla\omega=(\omega_1,\ldots,\omega_n),\quad \omega_{xx}=\left(\omega_{kl}\right)_{1}^{n}.$$
\begin{lemma}
Assume that $\Gamma^{n}$ is given by (\ref{yav}), where $\omega$ is $C^k$-smooth, $k\geqslant 2$. Then the matrices (\ref{tau3}) read as
\begin{equation}\tau=\tau_0[\Gamma^n]B,\quad \tau_0 =I-\frac{\omega_x\times\omega_x}{\sqrt{1+\omega_ x^2}(1+\sqrt{1+\omega_ x^2})},\quad B\in{\rm O}(n),
\label{tauu}\end{equation}
and curvature matrix (\ref{cm}) has to be chosen from the following two:
\begin{equation}\mathcal{K}^{\pm}[\Gamma^n]=\frac{\pm1}{\sqrt{1+\omega_{x}^2}}\tau^T\omega_{xx}\tau,
\label{krr}\end{equation}
where the sign ``+'' corresponds to the case $(\mathbf{n}^+[\Gamma^n],\mathbf e_{n+1})>0$.
\end{lemma}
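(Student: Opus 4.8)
The plan is to use the graph \ref{yav} as its own parametrization, $\theta=x\in\Omega$, which makes every object in Definitions 3.1 and 4.1 explicit. First I would compute the gradient: the first $n$ coordinates of the position vector $X$ are the identity map $x\mapsto x$ and the last one is $\omega$, so $X_x=\left(\matrix{I\cr \omega_x\cr}\right)$ and, by \ref{metr},
$$g[\Gamma^n]=X_x^TX_x=I+\omega_x\times\omega_x,\qquad g_{ij}=\delta_{ij}+\omega_i\omega_j.$$
Hence $\det g=1+\omega_x^2>0$, so \ref{yav} really is an admissible parametrization in the sense of \ref{nonded} and $g^{-1}$ exists; moreover $g$, and hence all objects built from it, are as smooth as claimed.

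The one step needing a genuine computation is the closed form of the symmetric positive square root $\tau_0=\sqrt{g^{-1}}$ that enters the family \ref{tau3}. I would use that $P:=\omega_x\times\omega_x$ is of rank one with $P^2=\omega_x^2\,P$, and seek $\tau_0$ in the form $\tau_0=I-cP$ with a scalar $c=c(\omega_x^2)\geqslant0$; then $\tau_0^2=I-(2c-c^2\omega_x^2)P$, while $g^{-1}=I-\frac{1}{1+\omega_x^2}P$ by the Sherman--Morrison identity. Matching the two reduces to the scalar equation $2c-c^2\omega_x^2=\frac{1}{1+\omega_x^2}$, whose nonnegative root is $c=\bigl(\sqrt{1+\omega_x^2}\,(1+\sqrt{1+\omega_x^2})\bigr)^{-1}$. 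I would then check $\tau_0\succ0$ — its eigenvalues are $1$ on $\omega_x^{\perp}$ (multiplicity $n-1$) and $(1+\omega_x^2)^{-1/2}$ on ${\rm Span}\{\omega_x\}$ — so $\tau_0$ is indeed the square root, and the whole family \ref{tau3} is $\tau=\tau_0B$, $B\in{\rm O}(n)$. This is \ref{tauu}.

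Finally I would evaluate the curvature matrix \ref{cm} through its working form \ref{cms} with $\theta=x$. The vector with coordinates $(-\omega_1,\dots,-\omega_n,1)$ is orthogonal to each column $X_j=(\mathbf e_j,\omega_j)$ of $X_x$, so the two unit normals are $\mathbf n^{\pm}=\pm(1+\omega_x^2)^{-1/2}(-\omega_1,\dots,-\omega_n,1)^T$, and the choice with $(\mathbf n^+,\mathbf e_{n+1})>0$ carries the sign ``$+$''. Since the first $n$ components of $X$ are affine in $x$, the array $X_{xx}$ has a single nonzero slot, $\partial^2X/\partial x^k\partial x^l=\omega_{kl}\,\mathbf e_{n+1}$, whence $(X_{xx},\mathbf n^+)=\pm(1+\omega_x^2)^{-1/2}\omega_{xx}$. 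Substituting into \ref{cms} gives $\mathcal K[\Gamma^n]=\tau^T(X_{xx},\mathbf n^+)\tau=\pm(1+\omega_x^2)^{-1/2}\tau^T\omega_{xx}\tau$, i.e.\ \ref{krr}, with the sign tied to the orientation condition above. The only mild obstacle here is the closed-form extraction of $\tau_0=\sqrt{g^{-1}}$ and the certification of its positivity; the normal and the Hessian array of $X$ fall out directly from the graph structure.
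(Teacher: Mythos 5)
Your proof is correct and follows the same overall route as the paper: compute $X_x$, the metric $g=I+\omega_x\times\omega_x$ and its inverse, extract a closed form for $\tau_0=\sqrt{g^{-1}}$, then evaluate $\mathcal K$ through (\ref{cms}) using $X_{kl}=\omega_{kl}\mathbf e_{n+1}$ and the explicit unit normals, with the sign fixed by $(\mathbf n^+,\mathbf e_{n+1})>0$ exactly as in the paper. The one step where you genuinely diverge is the extraction of the square root. The paper diagonalizes $g^{-1}$ explicitly (eigenvector $\omega_x^T$ with eigenvalue $1/(1+\omega_x^2)$, the orthogonal complement with eigenvalue $1$), writes $\tau_0=C\Lambda^{1/2}C^T$ and recovers (\ref{tauu}) by summing the entries $(\tau_0)^i_j=\sum_k\sqrt{\lambda_k}c^i_kc^j_k$. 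You instead posit the rank-one ansatz $\tau_0=I-cP$ with $P=\omega_x\times\omega_x$, use $P^2=\omega_x^2P$ and Sherman--Morrison to reduce the problem to the scalar equation $2c-c^2\omega_x^2=(1+\omega_x^2)^{-1}$, and then certify symmetry and positivity (eigenvalues $1$ and $(1+\omega_x^2)^{-1/2}$), so that uniqueness of the symmetric positive square root identifies your matrix with $\sqrt{g^{-1}}$; your value of $c$ does satisfy the scalar equation, so the algebra checks out. The two computations are equivalent in content --- both exploit the rank-one structure of $\omega_x\times\omega_x$ --- but your ansatz avoids the entrywise summation over the eigenbasis at the cost of invoking uniqueness of the positive definite square root, while the paper's spectral computation is constructive directly from the definition of $\tau_0$; the positivity check you include is exactly what makes the uniqueness argument legitimate, so no gap remains.
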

\begin{proof}
The lines (\ref{grad}), (\ref{metr}) with vector (\ref{yav}) look as
$$
X_j=\left(0,\ldots,1,\ldots,0,\omega_j\right)^T,\quad j=1,\dots,n,\quad g=I+\omega_x\times\omega_x,\quad g^{-1}=I-\frac{\omega_x\times\omega_x}{1+\omega_x^2}.$$
Formulas (\ref{tauu}) are trivial at the points, where $|\omega_ x|=0$.

If otherwise, let us find the eigenvalues of $g^{-1}$. The identity
\begin{equation}g^{-1}\xi=\xi-\frac{\omega_x\times\omega_x}{1+\omega_x^2}\xi=
\xi-\frac{(\omega_x^T,\xi)}{1+\omega_x^2}\omega_x,\quad \xi\in\mathbb{R}^{n},\label{evg}\end{equation}
makes obvious that $\xi_1=\omega_x^T$ is the eigenvector corresponding to the first eigenvalue $\lambda_1=1/(1+\omega_x^2)$. The other eigenvectors are orthogonal to $\omega_x^T$ and it follows from the line (\ref{evg}) that $\lambda_2=\lambda_3=\ldots=\lambda_{n}=1$.

Let the eigenvectors of $g^{-1}$ be the columns of matrix $C\in{\rm O(n)}$ and $C_1=\omega_x^T/|\omega_x|$. Then we have
$$g^{-1}=C\Lambda C^T,\quad\tau_0=g^{-\frac{1}{2}}=C\Lambda^{\frac{1}{2}}C^T,\quad\Lambda={\rm diag}\left(\frac{1}{1+\omega_x^2},1,1,\ldots,1\right).$$
Hence,
$$(\tau_0)^i_j=\sum_{k=1}^{n}\sqrt{\lambda_k}c^i_kc^j_k=\frac{1}{\sqrt{1+\omega_x^2}}c^i_1c^j_1+\sum_{k=2}^{n}c^i_kc^j_k
=\delta^i_j+\omega_i\omega_j\left(\frac{1}{\omega^2_x\sqrt{1+\omega_x^2}}-\frac{1}{\omega^2_x}\right),$$
and the rearranged righthand side of above brings out (\ref{tauu}).

Finally, we compute the curvature matrix by the formula (\ref{cms}):
$$X_{kl}=\left(\matrix{{\mathbf 0}\cr\omega_{kl}\cr}\right),\quad \mathbf{n}^+=\frac{1}{\sqrt{1+\omega_{x}^2}}\left(\matrix{-\omega^T_{x}\cr 1 \cr}\right),\quad\mathcal{K}_{ij}=
\frac{\omega_{kl}\tau^k_i\tau^l_j}{\sqrt{1+\omega_x^2}}.$$
\end{proof}
Every hypersurface admits a local graph-parametrization (\ref{yav}) and formulas (\ref{tauu}), (\ref{krr}) significantly decrease amount of calculations in applications. Indeed, consider the problem of computing matrices (\ref{krr}) at some point $M_0\in \Gamma^n$. Since the curvature matrices are geometric invariants, we are free to appoint $M_0=\mathbf 0$ and $\omega(0)=0$, $\omega_ x(0)=\bold 0$ in (\ref{yav}). If so, it follows from (\ref{tauu}), (\ref{krr}) that
\begin{equation}\tau_0(M_0)=I,\quad \mathcal{K}^{\pm}[\Gamma^n](M_0)=B^T\omega_ {xx}(\bold 0)B. \label{kr0}\end{equation}
To show our formulas at work consider two simple examples.

\begin{enumerate}
\item Let $\Gamma^n$ be a sphere $\mathcal S^n_R\subset\mathbb R^{n+1}$, $M_0\in\mathcal S^n_R$. To make use of (\ref{kr0}) appoint $M_0=\mathbf 0$, $x^{n+1}=\omega(x)=R-\sqrt{R^2-x^2}$. It is obvious that $\omega_{xx}(\bold 0)={I}/{R}$ and presentation (\ref{kr0}) brings out
\begin{equation}\tau_0(M_0)=I,\quad\mathcal K[\mathcal S^n_R](M_0)=\frac{1}{R}I.\label{sph}\end{equation}
    Since $M_0$ is an arbitrary point, we arrive to conclusion that  $\mathcal K[\mathcal S^n_R]={I}/{R}$.
\item
 Let $\Gamma^n\subset\mathbb R^{n+1}$ be the upper part of a hyperboloid (unclosed surface):
\begin{equation} \Gamma^n=\left\{|x|>R>0,\;x^{n+1}=\omega(x)=\sqrt{x^2-R^2}\right\}.\label{Hgr}\end{equation}
It follows from (\ref{yav}), (\ref{tauu}), (\ref{krr}) that
$$\omega_ x=\frac{x^T}{\omega},\quad\omega_{xx}=\frac{1}{\omega}\left(I-\frac{x\times x}{\omega^2}\right),$$
$$\tau_0[\Gamma^n]=I-\frac{x\times x}{\sqrt{\omega^2+x^2}(\omega+\sqrt{\omega^2+x^2})},$$
$$\mathcal K_0[\Gamma^n]=\frac{\pm1}{\sqrt{\omega^2+x^2}}\tau^T_0\left(I-\frac{x\times x}{\omega^2}\right)\tau_0.$$
Making use of the identity $(x\times x)^2=|x|^2(x\times x)$, we derive from the above formula
\begin{equation} \mathcal K_0[\Gamma^n]=
\frac{\pm1}{\sqrt{2x^2-R^2}}\left(I-\frac{2x\times x}{2x^2-R^2}\right)\;\;\Rightarrow\;\;\mathcal K[\Gamma^n]=B\mathcal K_0[\Gamma^n]B^T,\; B\in{\rm O}(n).\label{gip}\end{equation}
The choice of the sign in (\ref{gip}) will be made in the Section 5.
\end{enumerate}

\subsection{Normal and principal curvatures}
The classic approach to the definition of normal and principal curvatures involves the first and the second quadratic forms of a hypersurface. Namely, let $X[\Gamma^n](\theta)$ be some local parametrization of $\Gamma^n$, $M\in\Gamma^n$. Let the normal $\mathbf n^+[\Gamma^n]$ fixes an orientation of $\Gamma^n$ in the ambient space. For an arbitrarily fixed vector $d\theta\in\mathbb R^n$ the differential $dX(M)=X_k(M) d\theta^k$ belongs to the tangent $n$-plane (\ref{tP}).

Then by definition, the scalar products
$$(dX,dX)=(g[\Gamma^n]d\theta,d\theta),\quad g=X_{\theta}^TX_{\theta},$$
\begin{equation}(d^2X,\mathbf n^+)=(b[\Gamma^n]d\theta,d\theta),\quad b=(X_{\theta\theta},\mathbf n^+)\label{12f}\end{equation}
are the first and the second quadratic forms respectively.
The forms (\ref{12f}) are geometric invariants in the sense of Definition 2.4, while the matrices $g$, $b\in{\rm Sym}(n)$ are not.

The classic absolute invariants of hypersurfaces are often formulated via ratio
\begin{equation}J(M,d\theta)=\frac{(d^2X,\mathbf n^+)}{(dX,dX)}=\frac{(bd\theta,d\theta)}{(gd\theta,d\theta)},\quad d\theta\in\mathbb R^n,\label{normkr1}\end{equation}
and read as follows:
\begin{enumerate}
\item[(i)] $J(M,d\theta)$ is the {\it normal curvature} of $\Gamma^n$ in the direction $dX(M)=X_k(M)d\theta^k$, that is the curvature of the intersection of $\Gamma^n$ with the $2$-dimensional plane containing $\mathbf n^+[\Gamma^n](M)$ and $dX(M)$;

\item[(ii)] the {\it principal curvatures $\{\kappa_i\}_{1}^n$} of $\Gamma^n$ at $M$ are the stationary values of ratio (\ref{normkr1}), i.e., the eigenvalues of the matrix $g^{-1}b$;

\item[(iii)] solutions $d\theta$ to equations $J(M,d\theta)=\kappa_i$, i.e., the eigenvectors of $g^{-1}b$ are the coordinates of the {\it principal directions} in the basis $\{X_k\}_{1}^n$.
\end{enumerate}

Speaking about direction in $(i)$ either principal directions in $(iii)$ one should understand them in the broad sense. In fact, only collinearity of these {\it directions} with the relevant straight lines is a geometric invariant.

Now we reformulate (\ref{normkr1}) and $(i)$ -- $(iii)$ in our terms of curvature matrix and $1$-direction (see Definition 4.1, Definition 3.5 and example 1 after it).

Denote by
\begin{equation}\mathbf k[\Gamma^n](M,\eta)=(\mathcal K[\Gamma^n](M)\eta,\eta), \quad \eta\in\mathbb R^n, \quad |\eta|=1,\label{Ceta}\end{equation}
the quadratic form generated by the curvature matrix $\mathcal K[\Gamma^n]\in{\rm Sym}(n)$.

Consider the geometric invariant description (\ref{IL}) of the tangent $n$-plane. For an arbitrary vector $\eta$ from (\ref{Ceta}) the tangent vector $l[\Gamma^n](M)=X_{(j)}[\Gamma^n](M)\eta^j$ fixes some $1$-direction $\mathcal L^1_\eta[\Gamma^n](M)$ with $\eta$ as cartesian allocator.

Denote by $\Gamma^1_\eta$ the intersection of $\Gamma^n$ with the $2$-dimensional plane $\mathcal P^2_l(M)$ which spans $\mathbf n^+[\Gamma^n](M)$ and $l[\Gamma^n](M)$. According to (\ref{krd}) we denote by $\kappa[\Gamma^1_\eta]$ the curvature of the plane curve $\Gamma^1_\eta$.

\begin{Th}
Let $M\in\Gamma^n$, $\Gamma^n$ be a $C^k$-smooth hypersurface, $k\geqslant2$.

Then
\begin{enumerate}
\item[(i)] $\mathbf k[\Gamma^n](M,\eta)$ is the normal curvature of $\Gamma^n$ at $M$ in the $1$-direction with allocator $\eta$, i.e., $\mathbf k[\Gamma^n](M,\eta)=\kappa[\Gamma^1_\eta](M)$;

\item[(ii)] the principal curvatures $\{\kappa_i\}_{1}^n$ of $\Gamma^n$ at $M$ are the eigenvalues of the curvature matrix $\mathcal K[\Gamma^n](M)$;

\item[(iii)] the eigenvectors of $\mathcal K[\Gamma^n](M)$ are the cartesian allocators of the principal $1$-directions.
\end{enumerate}
\end{Th}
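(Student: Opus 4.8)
The plan is to reduce all three assertions to the classical facts (i)--(iii) stated after (\ref{normkr1}) by means of a single change of variables, $d\theta=\tau\eta$. Recall from (\ref{cms}) that $\mathcal K[\Gamma^n]=\tau^T b\,\tau$ with $b=(X_{\theta\theta},\mathbf n^+)$, and from (\ref{tau1}) that $g^{-1}=\tau\tau^T$; since $g$ is positive definite, $\tau$ is invertible, and the identity $(g\tau_i,\tau_j)=\delta_{ij}$ recorded after (\ref{tau3}) says exactly that $\tau^T g\,\tau=I$.

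First I would prove (i). Fix an allocator $\eta\in\mathbb R^n$, $|\eta|=1$, and set $d\theta:=\tau\eta$. Then $dX=X_\theta\,d\theta=X_k\tau^k_j\eta^j=X_{(j)}\eta^j=l[\Gamma^n](M)$ by (\ref{j}), so $d\theta$ is precisely the coordinate vector, in the basis $\{X_k\}$, of the tangent line fixing the $1$-direction $\mathcal L^1_\eta[\Gamma^n](M)$. A direct computation using $\tau^T g\,\tau=I$ gives $(dX,dX)=(g\,d\theta,d\theta)=(\tau^T g\,\tau\,\eta,\eta)=|\eta|^2=1$, while $(d^2X,\mathbf n^+)=(b\,d\theta,d\theta)=(\tau^T b\,\tau\,\eta,\eta)=(\mathcal K\eta,\eta)=\mathbf k[\Gamma^n](M,\eta)$. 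Hence the ratio (\ref{normkr1}) satisfies $J(M,d\theta)=\mathbf k[\Gamma^n](M,\eta)$. By classical fact (i) this number is the normal curvature of $\Gamma^n$ in the direction $dX=l$, i.e.\ the curvature of the normal section $\Gamma^1_\eta=\Gamma^n\cap\mathcal P^2_l(M)$, and by (\ref{krd}) that curvature is $\kappa[\Gamma^1_\eta](M)$, provided the plane curve $\Gamma^1_\eta$ is oriented so that its unit normal at $M$ is the restriction of $\mathbf n^+[\Gamma^n]$ (which indeed lies in $\mathcal P^2_l(M)$ and is orthogonal to $l$). This closes (i).

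For (ii) and (iii) I would exploit that $\mathcal K$ is a conjugate of the shape operator: from $\mathcal K=\tau^T b\,\tau$ and $g^{-1}=\tau\tau^T$ one obtains $\mathcal K=\tau^{-1}(g^{-1}b)\tau$, so $\mathcal K$ and $g^{-1}b$ are similar and share their eigenvalues. By classical fact (ii) those eigenvalues are the principal curvatures $\{\kappa_i\}_1^n$, which is (ii); symmetry of $\mathcal K$, which is built into (\ref{cm}), re-confirms they are real. For (iii), if $\eta$ with $|\eta|=1$ is an eigenvector of $\mathcal K$ with eigenvalue $\kappa_i$, then $d\theta:=\tau\eta$ is an eigenvector of $g^{-1}b$, so by classical fact (iii) $d\theta$ is the coordinate vector of a principal direction; the corresponding line is spanned by $X_k\,d\theta^k=X_{(j)}\eta^j=l$, which is the principal $1$-direction $\mathcal L^1_\eta[\Gamma^n](M)$ with cartesian allocator $\eta$. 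Reversing the argument, with $\eta:=\tau^{-1}d\theta$ normalized to unit length, shows every principal $1$-direction arises in this way.

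The matrix identities $\tau^T g\,\tau=I$ and $\mathcal K=\tau^{-1}(g^{-1}b)\tau$ are immediate, so I expect the main obstacle to be the orientation bookkeeping in (i): one must check that the normal section $\Gamma^1_\eta$, equipped with the orientation induced by $\mathbf n^+[\Gamma^n]$, has curvature equal to the \emph{signed} ratio (\ref{normkr1}) and not its absolute value --- this is exactly the point stressed in the discussion after (\ref{krd}). Degenerate cases (repeated principal curvatures, or the point where the section degenerates) need only a remark, since then any unit vector of the relevant eigenspace of $\mathcal K$ is an allocator of a principal $1$-direction, in agreement with the classical non-uniqueness of principal directions.
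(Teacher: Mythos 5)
Your proposal is correct and takes essentially the same route as the paper: the substitution $d\theta=\tau\eta$, together with $g^{-1}=\tau\tau^T$ and $\mathcal K=\tau^T b\,\tau$, turns the classical ratio $J(M,d\theta)$ into $(\mathcal K\eta,\eta)$, and the identity $dX=X_k\tau^k_j\eta^j=X_{(j)}\eta^j=l$ identifies the relevant normal section with $\Gamma^1_\eta$, exactly as in the paper's proof. The only (harmless) difference is that the paper treats $(ii)$--$(iii)$ as straightforward consequences of $(i)$ via the stationary values of the quadratic form, while you make the similarity $\mathcal K=\tau^{-1}(g^{-1}b)\tau$ explicit; both arguments are immediate and equivalent.
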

\begin{proof}
Let $\eta\in\mathbb R^n$, $|\eta|=1$. Put $d\theta=\tau\eta$ with matrix $\tau=\tau[\Gamma^n](M)$ from (\ref{tau3}).
It follows from (\ref{tau1}), (\ref{cms}), (\ref{12f}), (\ref{normkr1}), (\ref{Ceta}) that
$$J(M,d\theta)=\frac{(bd\theta,d\theta)}{(gd\theta,d\theta)}=\frac{\left((X_{\theta\theta},\mathbf n^+)\tau\eta,\tau\eta\right)}{\left((\tau\tau^{T})^{-1}\tau\eta,\tau\eta\right)}=
(\mathcal K[\Gamma^n](M)\eta,\eta)=\mathbf k[\Gamma^n](M,\eta).$$
Therefore, $\mathbf k[\Gamma^n](M,\eta)$ is the curvature of the intersection of $\Gamma^n$ with the $2$-dimensional plane containing $\mathbf n^+[\Gamma^n](M)$ and $dX(M)=X_kd\theta^k$. This intersection coincides with $\Gamma^1_\eta$, since in view of (\ref{j})
$$dX(M)=X_k\tau^k_j\eta^j=X_{(j)}\eta^j=l[\Gamma^n](M).$$
This validates $(i)$. Assertions $(ii)$, $(iii)$ are straightforward consequences of $(i)$.
\end{proof}

Since the curvature of the plane curve $\Gamma^1_\eta$ may be interpreted as its curvature matrix, see (\ref{krd}), equality $(i)$ brings out the following relation
$$\mathcal K[\Gamma^1_\eta](M)=\eta^T\mathcal K[\Gamma^n](M)\eta,$$
which admits a natural extension from $q=1$ to $q=2,\dots, n$.

Consider some $(n\times q)$-matrix $\eta$, $\eta^T\eta=I\in{\rm Sym}(q)$. Let $M\in\Gamma^n$ and $\eta$ be a cartesian allocator of some $q$-direction $\mathcal L^q_\eta[\Gamma^n](M)$, see Definition 3.5. Consider a $(q+1)$-dimensional plane $\mathcal P^{q+1}_\eta[\Gamma^n](M)$ such that
$$\{\mathbf n^+[\Gamma^n](M), \mathcal L^q_\eta[\Gamma^n](M)\}\subset\mathcal P^{q+1}_\eta[\Gamma^n](M).$$

\begin{Def} Let $1\leqslant q\leqslant n$.
We say that $\mathcal P^{q+1}_\eta[\Gamma^n](M)$ is a {\it normal $(q+1)$-sectional plane} and the surface
\begin{equation}\Gamma^q_\eta=\Gamma^n\cap\mathcal P^{q+1}_\eta[\Gamma^n](M)\label{nql}\end{equation}
is a {\it normal $q$-section} of $\Gamma^n$ at $M$.
\end{Def}

Notice that any normal $(q+1)$-sectional plane may be considered as $\mathbb R^{q+1}$. Then surface (\ref{nql}) turns into a $q$-dimensional hypersurface in $\mathbb R^{q+1}$ and Definition 4.1 provides the curvature matrix $\mathcal K[\Gamma^{q}_\eta]\in{\rm Sym(q)}$ at all points of $\Gamma^{q}_\eta$. However, in this paper only points with $\mathbf n^+[\Gamma^q_\eta](M)=\mathbf n^+[\Gamma^n](M)$ are of interest.

\begin{Def}
Let $\Gamma^{q}_\eta$ be a normal $q$-section of $\Gamma^n$ at a point $M$. We say that the matrix $\mathcal K[\Gamma^q_\eta](M)$ is the {\it normal $q$-sectional curvature matrix} of $\Gamma^{n}$ at $M$ in the $q$-direction with allocator $\eta$.
\end{Def}

The notion of $q$-connection on a surface via subsurface, see Definition 3.7, allows to write out all normal $q$-sectional curvature matrices in terms of $\mathcal K[\Gamma^{n}](M)$.

\begin{Th}
Let $\Gamma^q_\eta$ be some normal $q$-section of $\Gamma^n$ at $M$, $\eta$ be some cartesian allocator of corresponding $q$-direction $\mathcal L^q_\eta[\Gamma^n](M)$ and $\zeta$ be the $q$-connection on $\Gamma^n$ via $\Gamma^q_\eta$ defined by (\ref{dz}). Then
\begin{equation}B\mathcal K[\Gamma^{q}_\eta](M)B^T=\eta^T\mathcal K[\Gamma^n](M)\eta,\quad B=\eta^T\zeta(M)\in{\rm O}(q).\label{BKq}\end{equation}
\end{Th}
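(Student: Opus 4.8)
The plan is to reduce the statement to an algebraic identity about the matrix $\tau$ and the normal section, using the definitions of curvature matrix and $q$-connection already in hand. First I would set up the normal $(q+1)$-sectional plane $\mathcal P^{q+1}_\eta[\Gamma^n](M)$ as a copy of $\mathbb R^{q+1}$, so that $\Gamma^q_\eta$ becomes a $q$-dimensional hypersurface in $\mathbb R^{q+1}$ and Definition 4.1 applies to it. Since both $\Gamma^q_\eta$ and $\Gamma^n$ pass through $M$ with a common normal $\mathbf n^+[\Gamma^q_\eta](M)=\mathbf n^+[\Gamma^n](M)$ (this is exactly the restriction imposed in the remark before Definition 4.4), the second fundamental form of $\Gamma^q_\eta$ at $M$ is the restriction of the second fundamental form of $\Gamma^n$ to the tangent plane $\mathcal L^q_\eta[\Gamma^n](M)$. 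Concretely, I would parametrize $\Gamma^q_\eta$ by $\xi\in\Xi\subset\mathbb R^q$ via a $q$-embedding $\theta=\theta(\xi)$ into the chart of $\Gamma^n$, exactly as in Definition 3.3, so that formula (\ref{dz}) gives $X_{(\xi)}[\Gamma^q_\eta]=(X_{(\theta)}[\Gamma^n]\zeta)$ with $\zeta=\tau^{-1}[\Gamma^n]\theta_\xi\tau[\Gamma^q_\eta]$.

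Next I would differentiate once more and project onto $\mathbf n^+$. Applying the invariant second-derivative construction (\ref{ij})–(\ref{cij}) to $\Gamma^q_\eta$ and using $X_{(\xi)}[\Gamma^q_\eta]=X_{(\theta)}[\Gamma^n]\zeta$, the commuting part $X_{(\xi\xi)}[\Gamma^q_\eta]$ expands into $\zeta^T X_{(\theta\theta)}[\Gamma^n]\zeta$ plus terms of the form $X_{(\theta)}[\Gamma^n](\,\cdot\,)$ coming from the derivatives of $\zeta$; the latter are tangential to $\Gamma^n$ and hence orthogonal to $\mathbf n^+[\Gamma^n](M)=\mathbf n^+[\Gamma^q_\eta](M)$. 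Taking the scalar product with $\mathbf n^+$ and invoking (\ref{cms}) for both surfaces yields
\begin{equation*}
\mathcal K[\Gamma^q_\eta](M)=\zeta^T(M)\,\mathcal K[\Gamma^n](M)\,\zeta(M).
\end{equation*}
Then I would bring in Lemma 3.8, which says $\zeta(M)$ is itself a cartesian allocator of the very same $q$-direction $\mathcal L^q_\eta[\Gamma^n](M)$; by Remark 3.6 any two cartesian allocators of one $q$-direction differ by right multiplication by an orthogonal matrix, so there is $B\in\mathrm O(q)$ with $\zeta(M)=\eta B$, and indeed $B=\eta^T\zeta(M)$ because $\eta^T\eta=I$. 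Substituting $\zeta(M)=\eta B$ into the displayed identity gives $\mathcal K[\Gamma^q_\eta](M)=B^T\eta^T\mathcal K[\Gamma^n](M)\eta B$, which is (\ref{BKq}) after multiplying by $B$ on the left and $B^T$ on the right; that $B=\eta^T\zeta(M)$ is orthogonal follows from $\zeta^T\zeta=I$ (noted in the proof of Lemma 3.8) together with $\eta^T\eta=I$ and the fact that both $\mathrm{Span}$'s coincide so $\eta\eta^T$ acts as the identity on the relevant $q$-plane.

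The main obstacle I anticipate is the careful bookkeeping in the second-derivative expansion: one must verify that \emph{all} the extra terms produced by differentiating $\zeta$ (equivalently, by the difference between $X_{(\xi)(\xi)}$ and $X_{(\xi\xi)}$, and between $X_{(\theta)}[\Gamma^n]\zeta$ differentiated in $\xi$ versus $\zeta^T$ conjugating $X_{(\theta\theta)}$) really do lie in the tangent $n$-plane $\mathcal L^n[\Gamma^n](M)$, so that they vanish against $\mathbf n^+[\Gamma^n](M)$. This is where the hypothesis $\mathbf n^+[\Gamma^q_\eta](M)=\mathbf n^+[\Gamma^n](M)$ is essential — without it the two normals differ and the cross terms need not drop out. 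The rest is the routine identification of allocators via Lemma 3.8 and Remark 3.6, plus checking $B\in\mathrm O(q)$, which is immediate from $\zeta^T\zeta=I$.
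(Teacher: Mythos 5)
Your proposal is correct and follows essentially the same route as the paper's own proof: double invariant differentiation of the embedding identity $X[\Gamma^q_\eta](\xi)=X[\Gamma^n](\theta(\xi))$, projection onto the common normal $\mathbf n^+$ (which kills the tangential term $X_{(\theta)}[\Gamma^n]\zeta_{(\xi)}$), yielding $\mathcal K[\Gamma^q_\eta](M)=\zeta^T\mathcal K[\Gamma^n]\zeta(M)$, and then the identification $\zeta(M)=\eta B$, $B=\eta^T\zeta(M)\in{\rm O}(q)$, via Lemma 3.8 and Remark 3.6. No substantive difference from the paper's argument.
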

\begin{proof}
Consider some local parametrization $X[\Gamma^n](\theta)$, $\theta\in\Theta\subset\mathbb R^{n}$, in a neighborhood of $M\in\Gamma^{n}\subset\mathbb R^{n+1}$. If the hypersurface $\Gamma^n$ is $C^k$-smooth, its normal $q$-section is $C^k$-smooth embedding at least locally, see Definition 3.3. So there exists a $C^k$-smooth mapping $\Xi\rightarrow\Theta$ such that $\det\theta_{\xi}^T\theta_{\xi}\neq0$ and
$$
X[\Gamma^q_\eta](\xi)=X[\Gamma^n](\theta(\xi)),\quad\theta=\theta(\xi),\quad \xi\in\Xi\subset\mathbb R^{q},
$$
in some $\mathbb R^{q+1}$-vicinity of $M$.

Similar to (\ref{dz}) double invariant differentiation of this identity brings out the line
\begin{equation}X_{(\xi)(\xi)}[\Gamma^q_\eta](\xi)=\left(\zeta^TX_{(\theta)(\theta)}[\Gamma^n]\zeta\right)(\xi)+
\left(X_{(\theta)}[\Gamma^n]\zeta_{(\xi)}\right)(\xi).\label{dz2}\end{equation}
We have $\mathbf n^+[\Gamma^q_\eta](M)=\mathbf n^+[\Gamma^n](M)\in\mathcal P^{q+1}_\eta[\Gamma^n](M)$ and Definition 4.1 with formula (\ref{cm}) serves both $\Gamma^n$, $\Gamma^q_\eta$. Therefore, equality (\ref{dz2}) restricted to $M$ leads to
\begin{equation}\mathcal K[\Gamma^q_\eta](M)=\left(\zeta^T\mathcal K[\Gamma^n]\zeta\right)(M),\quad \zeta=\tau^{-1}[\Gamma^n]\theta_{\xi}\tau[\Gamma^q_\eta].\label{dzK}\end{equation}
But it follows from Lemma 3.8 that $\zeta(M)$ is one of cartesian allocators of $\mathcal L^q_\eta[\Gamma^n](M)$. Due to remark 3.6 there exists a matrix $B\in{\rm O}(q)$ such that $\zeta(M)=\eta B$ and $B=\eta^T\zeta(M)$. In cooperation with (\ref{dzK}) it completes the proof of relation (\ref{BKq}).
\end{proof}

We see the inequality (\ref{dzK}) is a new result in differential geometry.

Keeping in mind (\ref{BKq}) we can say that the normal $q$-sectional curvature matrix described in Definition 4.6 is a generalization of the normal curvature (\ref{Ceta}), see Theorem 4.4, $(i)$.
Definition 4.6 as well opens a way to notions of {\it $q$-sectional principal curvatures} of $\Gamma^n$ at $M$. However, it is of no use in this paper.

\section{On $p$-curvatures}

\subsection{Cone of $m$-positive matrices}

The symmetry of the curvature matrix is crucial for application of L. G{\aa}rding algebraic theory \cite{G59}
in geometric developments. To describe this connection we start with a brief algebraic survey.

Denote by $\lambda(S)=(\lambda_1,\lambda_2,\ldots,\lambda_n)$ the set of the eigenvalues of some matrix $S\in{\rm Sym}(n)$. Following L. G{\aa}rding, we substitute the classic characteristic polynomial of $S$ by $\det(S+tI)$, $t\in\mathbb R$, and deduce the presentation
\begin{equation}\det(S+tI)=\sum_{p=0}^{n}T_p(S)\cdot t^{n-p}=\prod_{i=1}^n(t+\lambda_i(S)),\quad t, \lambda_i\in\mathbb R\label{raz}.\end{equation}
Here $T_p(S)$ is the sum of all $p$-s order principal minors of $\det S$ by Definition 3.2. It follows from (\ref{raz}) that
\begin{equation}T_p(S)=\sigma_p(\lambda(S))=\sum_{\{i_1<i_2<\ldots< i_p\}}\lambda_{i_1}\lambda_{i_2}\ldots \lambda_{i_p}, \quad 1\leqslant p\leqslant n.\label{sim}\end{equation}
It is of common knowledge that the elementary symmetric functions $\{\sigma_p(\lambda(S))\}_1^n$ satisfy the identity $\sigma_p(\lambda(S))= \sigma_p(\lambda(B^TSB))$. Therefore, relation (\ref{sim}) means that $p$-traces of $S$ are {\it orthogonal invariant} in the sense
\begin{equation}T_p(B^TSB)=T_p(S),\quad B\in{\rm O}(n).\label{ort}\end{equation}

\begin{Def}
Let $0\leqslant m\leqslant n$. We say that $S\in{\rm Sym}(n)$ is $m$-{\it positive} matrix if it belongs to the following cone
\begin{equation} K_m=K_m(n)=\{S\in {\rm Sym}(n):\; T_p(S)>0,\; p=0,1,\ldots,m\}.\label{Km}\end{equation}
\end{Def}
Since $T_0(S)\equiv1$, any symmetric matrix is $0$-positive and $K_0={\rm Sym}(n)$.
Notice that all coefficients of polynomial (\ref{raz}) are positive if and only if all its roots are negative. But the roots of (\ref{raz}) are the eigenvalues of $S$ with opposite sign. Hence, $K_n$ is exactly the cone of positive definite symmetric matrices.

Thus, Definition 5.1 generates the following stratification in the space of symmetric matrices:
\begin{equation}{\rm Sym}(n)=K_0\supset K_1\supset K_2\supset\dots\supset K_n.\label{sist}\end{equation}

The cones (\ref{Km}) were introduced  by N.M.Ivochkina  as {\it the cones of stability},
\cite{I83}. The term ``$m$-positive matrix'' for elements from $K_m$ was recently introduced in \cite{IYP12} and \cite{I12}.

It turned out that $K_m$ are the examples to L.G{\aa}rding algebraic theory of $a$-hyperbolic polynomials and cones, \cite{G59}.

\begin{remark}
In the paper \cite{G59} L.G{\aa}rding introduced the notion of $a$-hyperbolic polynomial. This is a homogeneous polynomial $P_m(s)$, $s\in\mathbb R^N$, such that the polynomial
$$p(t)=P_m(s+ta)=P_m(a)\prod_{i=1}^m\left(t+\lambda_i(a,s)\right),\quad a\in\mathbb R^N,\quad t\in\mathbb R,$$
has real roots $\{-\lambda_i(a,s)\}_1^m$ for all $s\in\mathbb R^N$.
Section 2, \cite{G59}, contains the central notion in this theory:
let $C(P_m,a)$ be the set of all $s$ such that $P_m(s+ta)\neq0$ when $t\geqslant 0$. The set $C(P_m,a)$ is now called the G{\aa}rding cone corresponding to $P_m$.
It is close to obvious (see \cite{FB16}) that polynomials $P_m=T_m(S)$, $m=1,\dots,n$, are $I$-hyperbolic in $\mathbb R^N$ with $N={n(n+1)}/{2}$ and $C(T_m,I)=K_m$. Below we reformulate some basic properties of G{\aa}rding cones in terms of the matrix cones $K_m$.

The significance of L.G{\aa}rding theory in FNPDE was firstly revealed in the paper
\cite{CNS85}, p.268, Section 1. The paper
\cite{IYP12}
contains a description of algebraic aspects of L.G{\aa}rding theory and renovated proofs of basic theorems.
The paper
\cite{FB16}
contains a popular review of L.G{\aa}rding theory and its relations to $T_m$, $K_m$ and FNPDE.
\end{remark}

\begin{lemma}
Let $S^0\in{\rm Sym}(n)$ be some $m$-positive matrix. The cone $K_m$ coincides with a connected component of the set $\{S\in {\rm Sym}(n): T_m(S)>0\}$, the component containing $S_0$.
\end{lemma}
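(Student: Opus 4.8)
The plan is to verify that $K_m$ is a nonempty subset of $\mathcal O:=\{S\in{\rm Sym}(n):T_m(S)>0\}$ which is simultaneously connected, relatively open and relatively closed in $\mathcal O$. Since ${\rm Sym}(n)$ is locally connected, the connected components of its open subset $\mathcal O$ are themselves open, and a nonempty subset of $\mathcal O$ that is connected and both open and closed in $\mathcal O$ must coincide with the connected component of each of its points, in particular with the component containing $S^0$.

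Three of the four properties are immediate. The cone $K_m$ is nonempty because $S^0\in K_m$ by hypothesis (also $I\in K_m$, since $T_p(I)>0$ for every $p$). By definition (\ref{Km}) one has $T_m>0$ on $K_m$, so $K_m\subset\mathcal O$; and $K_m=\bigcap_{p=0}^{m}\{S:T_p(S)>0\}$ is a finite intersection of open subsets of ${\rm Sym}(n)$, hence open, hence relatively open in $\mathcal O$. Connectedness of $K_m$ is also known: by L. G{\aa}rding's theory \cite{G59} (see the algebraic expositions \cite{IYP12}, \cite{FB16}) the cone $K_m=C(T_m,I)$ is convex, and convex sets are connected.

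The substantive point is relative closedness, i.e. $\overline{K_m}\cap\mathcal O\subset K_m$. Let $S\in\overline{K_m}$ satisfy $T_m(S)>0$. Continuity of the $p$-traces gives $T_p(S)\geqslant0$ for $p=0,1,\dots,m$, and it remains only to upgrade these to strict inequalities for $p<m$. For this I would invoke the Newton--Maclaurin inequalities for $m$-positive matrices (a standard consequence of G{\aa}rding's theory, see \cite{IYP12}, \cite{FB16}): on $K_m$ the normalized traces satisfy
$$\frac{T_1(S)}{{n\choose1}}\geqslant\left(\frac{T_2(S)}{{n\choose2}}\right)^{1/2}\geqslant\dots\geqslant\left(\frac{T_m(S)}{{n\choose m}}\right)^{1/m},$$
and, each term being continuous and nonnegative on $\overline{K_m}$, this chain of inequalities persists for our point $S$. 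Since $T_m(S)>0$ makes the last term strictly positive, all the preceding terms are strictly positive as well, i.e. $T_p(S)>0$ for $p=1,\dots,m$; hence $S\in K_m$. This proves relative closedness and completes the argument.

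I expect the Newton--Maclaurin input (equivalently, the convexity of $K_m$) to be the only genuine obstacle; the surrounding point-set topology is routine. A shorter route is available if one cites more of G{\aa}rding's theory directly: it already identifies $C(T_m,I)$ with the connected component of $I$ in $\{T_m\neq0\}$, on which $T_m$ is continuous, nonvanishing and positive at $I$, hence positive throughout; so this component is also a component of $\mathcal O$, and since $S^0$ and $I$ lie in the same set $K_m$ it is the component of $S^0$.
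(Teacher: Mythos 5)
Your argument is correct, and in fact the paper itself contains no internal proof of this lemma: it is quoted with a pointer to Lemma 2.2 of \cite{IF15smfn}, so the relevant comparison is with the standard argument in that reference, which your proof essentially reproduces. The point-set part (nonempty, open, connected, relatively closed in $\{T_m>0\}$ implies coincidence with the component of $S^0$) is routine and correctly handled, and the one substantive step --- relative closedness --- is exactly where the Newton--Maclaurin chain is normally invoked: for $S\in\overline{K_m}$ the non-strict Maclaurin inequalities survive by continuity, and $T_m(S)>0$ then forces $T_p(S)>0$ for all $p\leqslant m$. This is consistent with the property (\ref{macl}) that the paper itself lists as known.

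Two small remarks. First, since you prove the lemma by citing convexity of $K_m$ and the Maclaurin inequalities, you should make explicit that these inputs are established in G{\aa}rding's theory \cite{G59} (and the expositions \cite{IYP12}, \cite{FB16}) directly from the hyperbolicity of $T_m$ with respect to $I$, not via the component characterization you are proving; otherwise the argument would be circular in a treatment where Lemma 5.3 precedes those facts, as it formally does in this paper's ordering. Second, if you want to lean less on convexity, connectedness of $K_m$ also follows more cheaply from the fact that $K_m$ is a cone together with the monotonicity property (\ref{mon}): for $S\in K_m$ and $t\in[0,1)$ one has $(1-t)S+tI=(1-t)\bigl(S+\tfrac{t}{1-t}I\bigr)\in K_m$, so $K_m$ is star-shaped with respect to $I$. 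Your alternative ``shorter route'' via the identification of $K_m$ with the G{\aa}rding cone $C(T_m,I)$ and its known description as the component of $I$ in $\{T_m\neq0\}$ is also valid, but it outsources the whole content of the lemma to the cited theory rather than proving it.
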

The simple proof of Lemma 5.2 can be found in the paper \cite{IF15smfn} (Lemma 2.2).

Enumerate now some principal properties of $T_m$ in $K_m$:
\begin{enumerate}
\item The cone $K_m$ is convex in ${\rm Sym}(n)$, the function $F_m=T_m^{\frac{1}{m}}$ is concave in $K_m$ and $1$-homogeneity of $F_m$ carries out the inequality
\begin{equation}
F_m(S)\leqslant F^{ij}_m(\tilde{S})s_{ij},\quad S,\tilde{S}\in K_m,\quad F_m^{ij}(\tilde S)=\frac{\partial F_m(\tilde S)}{\partial \tilde s_{ij}}.\label{con}
\end{equation}

\item There are the inequalities
\begin{equation} F_{p-1}(S)>F_p(S),\quad S\in K_m,\quad  1\leqslant p\leqslant m.\label{macl}\end{equation}

\item The function $T_m$ is positive monotone in the sense
\begin{equation} T_m(S+\tilde{S})>T_m(S),\quad S\in K_m,\quad\tilde{S}\in\bar{K}_m,\,\tilde{S}\ne\bf0.\label{mon}\end{equation}
This implies that inclusions $S\in K_m$, $\tilde{S}\in\bar{K}_m$ guarantee $(S+\tilde{S})\in K_m$.

\item The skew symmetry of $T_m$ provides the inequality
\begin{equation}T_m(S+\xi\times\xi)=T_m(S)+T_m^{ij}(S)\xi_i\xi_j,\quad T_m^{ij}(S)=\frac{\partial T_m(S)}{\partial s_{ij}},\quad \xi\in\mathbb R^n.\label{kos}\end{equation}
Since $\xi\times\xi\in\bar{K}_n\subset\bar{K}_m$, (\ref{mon}) and (\ref{kos}) validate the inequality
\begin{equation}T_m^{ij}(S)\xi_i\xi_j>0,\quad S\in K_m,\quad \xi\ne\bf0.\label{ellip}\end{equation}
\end{enumerate}

In the paper \cite{F14prep}, \cite{IF15smfn} N. V. Filimonenkova extended Sylvester criterion for positive definite matrices onto $m$-positive matrices.
\begin{lemma}{\bf(Sylvester criterion)}
Let $S\in {\rm Sym}(n)$, $1\leqslant m\leqslant n$.

(i) Fix some index $i$, $1\leqslant i\leqslant n$. Denote by $S^{\langle i\rangle}\in {\rm Sym}(n-1)$ the matrix derived from $S$ by crossing out the row and the column numbered by $i$. Then
$$S\in K_m(n)\quad\Leftrightarrow\quad T_{m}(S)>0, \; S^{\langle i\rangle}\in K_{m-1}(n-1).$$

(ii) Fix some collection of different indexes $1\leqslant i_1,i_2,\ldots,i_{m-1}\leqslant n$. Denote by $S^{\langle i_1,i_2,\ldots,i_k\rangle}\in {\rm Sym}(n-k)$ the matrix derived from $S$ by crossing out $k$ rows and $k$ columns with corresponding numbers.
For $S$ to be $m$-positive it is necessary and sufficient to have
$$T_{m}(S)>0,\;T_{m-1}(S^{\langle i_1\rangle})>0,\;T_{m-2}(S^{\langle i_1,i_2\rangle})>0,\ldots,T_{1}(S^{\langle i_1,i_2,\ldots,i_{m-1}\rangle})>0.$$
\end{lemma}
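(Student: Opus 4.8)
The plan is to prove Lemma 5.4 by a two-pronged approach. For part (i), I would argue the implication ``$\Rightarrow$'' first: if $S\in K_m(n)$, then $T_m(S)>0$ is immediate from the definition (\ref{Km}), so the only content is to show that the principal submatrix $S^{\langle i\rangle}$ obtained by deleting row and column $i$ lies in $K_{m-1}(n-1)$, i.e. $T_p(S^{\langle i\rangle})>0$ for $p=0,1,\dots,m-1$. The key algebraic fact is the expansion of $T_p(S)$ by a fixed diagonal entry: $T_p(S)=s_{ii}T_{p-1}(S^{\langle i\rangle})+T_p(S^{\langle i\rangle})+(\text{terms linear in off-diagonal entries of row } i)$. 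More usefully, I would instead use the identity $T_m(S+tE_{ii})$, where $E_{ii}$ is the matrix with a single $1$ in position $(i,i)$: this is an affine function of $t$ whose leading coefficient is $T_{m-1}(S^{\langle i\rangle})$, in analogy with (\ref{kos}). Combining this with the monotonicity property (\ref{mon}) and the fact that $E_{ii}\in\bar K_n\subset\bar K_m$ forces $T_{m-1}(S^{\langle i\rangle})\geqslant 0$; then one upgrades to strict positivity by noting that if it vanished, $T_m(S+tE_{ii})$ would be constant, contradicting that $S+tE_{ii}\to$ a matrix outside $K_m$ as $t\to-\infty$ while $S+tE_{ii}$ stays $m$-positive for $t\geqslant 0$ (here I use Lemma 5.2 on connected components together with the $I$-hyperbolicity recalled in remark 5.1). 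Iterating this argument downward over $p=m-1,m-2,\dots,1$ applied to $S^{\langle i\rangle}$ yields $S^{\langle i\rangle}\in K_{m-1}(n-1)$.

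For the reverse implication of part (i), suppose $T_m(S)>0$ and $S^{\langle i\rangle}\in K_{m-1}(n-1)$. I want to conclude $S\in K_m(n)$. The strategy is a continuity/connectedness argument: perturb $S$ to a reference $m$-positive matrix inside the set $\{T_m>0\}$ without ever leaving it, and invoke Lemma 5.2. Concretely, consider the path $S(t)=S+tE_{ii}$ for $t\geqslant0$. By the affine identity above, $T_m(S(t))=T_m(S)+tT_{m-1}(S^{\langle i\rangle})$, which is strictly positive for all $t\geqslant0$ since both summands are nonnegative and $T_m(S)>0$. So the whole ray stays in $\{T_m>0\}$. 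For $t$ large, $S(t)$ has a dominant positive $(i,i)$ entry; a further analysis (or a second application of the criterion in one lower dimension, using the inductive hypothesis on $n$) shows $S(t)\in K_m(n)$ for $t$ large enough — essentially because adding a large rank-one positive piece $tE_{ii}$ to a matrix whose complementary block $S^{\langle i\rangle}$ is already $(m-1)$-positive produces an $m$-positive matrix, which one can see from the Schur-complement-style expansion of all the minors $T_p$, $p\leqslant m$. Since $S$ and $S(t)$ lie in the same connected component of $\{T_m>0\}$ (the segment connecting them stays in that set) and $S(t)\in K_m$, Lemma 5.2 gives $S\in K_m$.

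Part (ii) then follows from part (i) by a straightforward induction on the number of deleted indices. Indeed, applying (i) with $m$ and index $i_1$ reduces $S\in K_m(n)$ to the pair of conditions $T_m(S)>0$ and $S^{\langle i_1\rangle}\in K_{m-1}(n-1)$; applying (i) again to $S^{\langle i_1\rangle}$ with the level $m-1$ and index $i_2$ reduces $S^{\langle i_1\rangle}\in K_{m-1}(n-1)$ to $T_{m-1}(S^{\langle i_1\rangle})>0$ and $S^{\langle i_1,i_2\rangle}\in K_{m-2}(n-2)$; and so on. After $m-1$ steps the last condition is $S^{\langle i_1,\dots,i_{m-1}\rangle}\in K_1(n-m+1)$, which by definition of $K_1$ is exactly $T_1(S^{\langle i_1,\dots,i_{m-1}\rangle})>0$. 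Collecting the accumulated scalar inequalities gives precisely the stated chain, and the fact that the indices may be chosen in any order (and the result is the same) is inherited from part (i), where $i$ was arbitrary.

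I expect the main obstacle to be the reverse direction of part (i): establishing that a large rank-one positive perturbation $S+tE_{ii}$ is genuinely $m$-positive, rather than merely having $T_m>0$. This is where one must use that the complementary block is $(m-1)$-positive, and it requires a careful bookkeeping of how each $T_p(S+tE_{ii})$, $p=1,\dots,m$, decomposes into minors involving and not involving index $i$; the cleanest route is probably to set up an induction on $n$ so that the lower-dimensional instance of the lemma is available, together with the convexity of $K_m$ from property (\ref{con}) and the monotonicity (\ref{mon}). The ``$\Rightarrow$'' direction, by contrast, is essentially formal once the affine identity for $T_m(S+tE_{ii})$ and Lemma 5.2 are in hand.
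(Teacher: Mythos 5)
The paper itself offers no proof of Lemma 5.4: it is stated and attributed to \cite{F14prep}, \cite{IF15smfn}, so there is no internal argument to compare yours against; judged on its own terms, your plan is correct. Its engine is the rank-one identity $T_p(S+tE_{ii})=T_p(S)+t\,T_{p-1}(S^{\langle i\rangle})$ (with your $E_{ii}=e_i\times e_i$), which is the case $\xi=\sqrt{t}\,e_i$ of (\ref{kos}) combined with $T_p^{ii}(S)=T_{p-1}(S^{\langle i\rangle})$. Two simplifications are worth noting. In the forward direction your ``upgrade to strict positivity'' detour is unnecessary: since $K_m\subset K_p$ for $p\leqslant m$ by (\ref{sist}), inequality (\ref{ellip}) with $\xi=e_i$ (or the strict monotonicity (\ref{mon})) gives $T_{p-1}(S^{\langle i\rangle})=T_p^{ii}(S)>0$ for every $p\leqslant m$ directly; also the iteration should be run on $S$ with the lower-order traces $T_p$, not ``applied to $S^{\langle i\rangle}$'', which at that stage is not yet known to lie in any cone. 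In the reverse direction, the step you single out as the main obstacle --- that $S+tE_{ii}\in K_m(n)$ for large $t$ --- is in fact immediate from the same identity applied at each level: for every $p\leqslant m$ the slope $T_{p-1}(S^{\langle i\rangle})$ is positive by hypothesis, so all of $T_1,\dots,T_m$ evaluated at $S+tE_{ii}$ become positive once $t$ is large; no Schur complements, no convexity (\ref{con}), and no induction on $n$ are needed. The segment $\{S+sE_{ii}:0\leqslant s\leqslant t\}$ stays in $\{T_m>0\}$ because $T_m(S)>0$ and the slope is nonnegative, so Lemma 5.2 indeed places $S$ in $K_m$. Your derivation of part (ii) by iterating part (i), terminating with $K_1(n-m+1)=\{T_1>0\}$, is exactly right.
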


\begin{remark} Since indexes $i_1,i_2,\ldots,i_{m-1}$ are arbitrary, it follows that if $S\in K_m$, then $S$ has at least $m$ positive eigenvalues (the converse implication is only true for $m=n$).
\end{remark}

\subsection{On $m$-convex hypersurfaces}

The algebraic notions described in the previous Subsection give rise to new geometric invariants. We start with the notion of $p$-curvature of a hypersurface. Below we deal with $C^k$-smooth oriented hypersurfaces $\Gamma^n\subset\mathbb R^{n+1}$, $k\geqslant 2$.
\begin{Def}
The functions
\begin{equation}\mathbf k_p[\Gamma^n](M)=T_p(\mathcal K[\Gamma^n])(M),\quad M\in\Gamma^n,\quad p=1,\dots,n,\label{p-kr}\end{equation}
are the $p$-{\it curvatures} of $\Gamma^{n}$. By definition $\mathbf k_0\equiv 1$.
\end{Def}

The following proposition is a straightforward consequence of Definition 5.6, remark 4.2, relations (\ref{sim}) and Theorem 4.4, $(ii)$.
\begin{lemma}
Let $\Gamma^n$ be an oriented $C^k$-smooth hypersurface, $k\geqslant 2$. Then $p$-curvatures of $\Gamma^{n}$ are $C^{k-2}$-smooth functions of $M\in\Gamma^n$, absolute geometric invariants of $\Gamma^n$ and
\begin{equation}\mathbf k_p[\Gamma^n]=\sigma_p(\kappa_{1},\kappa_{2},\ldots ,\kappa_{{n}})=\sum_{i_1<i_2<\ldots< i_p}\kappa_{i_1}\kappa_{i_2}\ldots \kappa_{i_p},\quad 1\leqslant p\leqslant n, \label{simkr}\end{equation}
where $\{\kappa_i[\Gamma^n]\}_1^{n}$  is the set of the principal curvatures.
\end{lemma}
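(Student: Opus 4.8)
The plan is to read all three assertions off from the structural facts already established for the curvature matrix, so no genuinely new argument is required. First I would invoke Definition 5.6, by which $\mathbf k_p[\Gamma^n]=T_p(\mathcal K[\Gamma^n])$, and recall from remark 4.2 that Definition 4.1 fixes not a single curvature matrix but a whole orthogonal orbit $\mathcal K=B^T\mathcal K_0B$, $B\in{\rm O}(n)$, where $\mathcal K_0$ is the one built from the symmetric choice $\tau_0=\sqrt{g^{-1}}$. The pivotal observation is that $T_p$ is orthogonally invariant by (\ref{ort}), so $T_p(\mathcal K)=T_p(\mathcal K_0)$ for every admissible $B$; hence $\mathbf k_p$ is unambiguously defined irrespective of the freedom in the choice of $\tau$, and it suffices to compute with $\mathcal K_0$.

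For the smoothness claim I would track the regularity through formula (\ref{cms}). For a $C^k$-smooth hypersurface the metric tensor $g=X_\theta^TX_\theta$ is $C^{k-1}$ in $\theta$, hence $\tau_0=\sqrt{g^{-1}}$ is $C^{k-1}$, while $(X_{\theta\theta},\mathbf n^+)$ is $C^{k-2}$; therefore $\mathcal K_0=\tau_0^T(X_{\theta\theta},\mathbf n^+)\tau_0$ is $C^{k-2}$ in $\theta$, and so $C^{k-2}$ as a function of $M\in\Gamma^n$. Since $T_p$ is a polynomial in the matrix entries, $\mathbf k_p=T_p(\mathcal K_0)$ inherits $C^{k-2}$ regularity.

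For absolute geometric invariance I would use that $\mathcal K[\Gamma^n]$ is itself an absolute geometric invariant up to the constant orthogonal factor $B$ (established in Section 4), which (\ref{ort}) neutralizes, so $\mathbf k_p$ does not depend on the choice of equivalent local parametrization in the sense of Definition 2.4. Finally, for the elementary-symmetric-function formula I would combine Theorem 4.4 $(ii)$, which identifies the principal curvatures $\{\kappa_i\}_1^n$ with the eigenvalues of $\mathcal K[\Gamma^n]$, with relation (\ref{sim}), which reads $T_p(S)=\sigma_p(\lambda(S))$; together these give $\mathbf k_p=T_p(\mathcal K)=\sigma_p(\kappa_1,\dots,\kappa_n)$. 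I do not expect a real obstacle here: the only subtlety is that the curvature matrix is determined only up to orthogonal conjugation, and this is exactly what the orthogonal invariance of $T_p$ removes; the remainder is bookkeeping of smoothness along (\ref{cms}).
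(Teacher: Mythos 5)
Your proposal is correct and follows exactly the route the paper indicates: the paper states the lemma as a straightforward consequence of Definition 5.6, remark 4.2, relation (\ref{sim}) and Theorem 4.4 $(ii)$, which is precisely the chain you spell out (orthogonal invariance of $T_p$ neutralizing the freedom $\mathcal K=B^T\mathcal K_0B$, smoothness tracked through (\ref{cms}) with $\tau_0=\sqrt{g^{-1}}$, and the eigenvalue identification giving $\sigma_p(\kappa_1,\dots,\kappa_n)$). No gaps; you have simply written out the bookkeeping the paper leaves implicit.
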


Notice that definitions (\ref{p-kr}) and (\ref{simkr}) are equivalent, but the first is preferable to apply in calculations. It makes obvious the smoothness of $p$-curvatures.

It follows from (\ref{simkr}) that the mean curvature and the Gauss curvature are included into Definition 5.6 as $1$-curvature and $n$-curvature. The geometric invariant (\ref{bc}) is the $(m-1)$-curvature of the hypersurface $\partial\Omega\subset\mathbb R^n$ in our terminology.

Definitions 4.1, 5.1 induce the following classification of smooth hypersurfaces.
\begin{Def}
Let $0\leqslant m\leqslant n$, $\Gamma^{n}$ is a $C^k$-smooth oriented hypersurface, $k\geqslant2$. The hypersurface $\Gamma^{n}$ is $m$-{\it convex} at a point $M\in\Gamma^{n}$ if its curvature matrix $\mathcal K[\Gamma](M)$ is $m$-positive and $\Gamma^{n}$ is $m$-convex if all its points are the points of $m$-convexity.
\end{Def}

Denote by ${\mathbf{K}}_m$ the set of all $m$-convex in $\mathbb{R}^{n+1}$ hypersurfaces. The lines (\ref{Km}), (\ref{p-kr}) brings out a constructive description of this set:
\begin{equation}{\mathbf{K}}_m=\{\Gamma^n\subset \mathbb{R}^{n+1}:\; \mathbf k_p[\Gamma^n](M)>0,\;p=0,1,\ldots,m,\;M\in\Gamma^n\}.\label{ccg}\end{equation}
The following proposition is a straightforward consequence of relations (\ref{tau1}), (\ref{cms}), (\ref{p-kr}).
\begin{lemma}
Let $X[\hat\Gamma^n]=\alpha X[\Gamma^n]$, $\alpha>0$. Then the equalities
$$\mathbf k_p[\hat\Gamma^n]=\frac{1}{\alpha^p}\mathbf k_p[\Gamma^n],\quad0\leqslant p\leqslant n,$$
are true.
\end{lemma}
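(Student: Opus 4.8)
The plan is to reduce everything to the scaling behaviour of the curvature matrix under the dilation $X[\hat\Gamma^n]=\alpha X[\Gamma^n]$, and then to apply the homogeneity of the $p$-trace. First I would compute how the metric tensor transforms: from $\hat X_\theta=\alpha X_\theta$ and the definition (\ref{metr}) we get $g[\hat\Gamma^n]=\alpha^2 g[\Gamma^n]$, hence $g^{-1}[\hat\Gamma^n]=\alpha^{-2}g^{-1}[\Gamma^n]$, and by (\ref{tau1}) (or directly via $\tau_0=\sqrt{g^{-1}}$ and the freedom (\ref{tau3})) one may take $\tau[\hat\Gamma^n]=\alpha^{-1}\tau[\Gamma^n]$. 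I would keep the same orthogonal factor $B$ in both cases so that the comparison is literal rather than up to $O(n)$-conjugation; remark 4.2 and the orthogonal invariance (\ref{ort}) show this is harmless for the $p$-curvatures anyway.

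Next I would track the second-order data. Since scaling commutes with differentiation, $\hat X_{\theta\theta}=\alpha X_{\theta\theta}$, and the unit normal is unchanged, $\mathbf n^+[\hat\Gamma^n]=\mathbf n^+[\Gamma^n]$ (dilation by a positive factor preserves orientation and normalisation). Plugging into the working formula (\ref{cms}),
$$\mathcal K[\hat\Gamma^n]=\tau^T[\hat\Gamma^n]\,(\hat X_{\theta\theta},\mathbf n^+)\,\tau[\hat\Gamma^n]=\alpha^{-1}\tau^T[\Gamma^n]\cdot\alpha\,(X_{\theta\theta},\mathbf n^+)\cdot\alpha^{-1}\tau[\Gamma^n]=\frac1\alpha\,\mathcal K[\Gamma^n].$$
So the curvature matrix simply scales by $\alpha^{-1}$.

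Finally I would invoke Definition 5.6 together with the fact that $T_p$ is a homogeneous polynomial of degree $p$ in the matrix entries: $T_p(\alpha^{-1}S)=\alpha^{-p}T_p(S)$ for any $S\in{\rm Sym}(n)$ (this is immediate from (\ref{sim}), since each monomial $\lambda_{i_1}\cdots\lambda_{i_p}$ picks up a factor $\alpha^{-p}$). Hence
$$\mathbf k_p[\hat\Gamma^n]=T_p(\mathcal K[\hat\Gamma^n])=T_p\!\left(\tfrac1\alpha\mathcal K[\Gamma^n]\right)=\frac{1}{\alpha^p}T_p(\mathcal K[\Gamma^n])=\frac{1}{\alpha^p}\mathbf k_p[\Gamma^n],$$
and the case $p=0$ is the trivial identity $1=1$. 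This completes the proof.

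There is no real obstacle here; the only point requiring a word of care is the choice of $\tau$ for $\hat\Gamma^n$, i.e.\ making sure the orthogonal freedom in (\ref{tau3}) is fixed consistently so that the scaling relation $\tau[\hat\Gamma^n]=\alpha^{-1}\tau[\Gamma^n]$ holds on the nose. Even without that care the conclusion follows, since by remark 4.2 different admissible $\tau$'s give curvature matrices conjugate by $B\in O(n)$, and $T_p$ is orthogonally invariant by (\ref{ort}); so the scaling of $\mathbf k_p$ is independent of these choices.
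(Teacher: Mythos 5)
Your argument is correct and is exactly the route the paper intends: it cites (\ref{tau1}), (\ref{cms}) and (\ref{p-kr}) as the source of the lemma, which is precisely your computation $g[\hat\Gamma^n]=\alpha^2 g[\Gamma^n]$, $\tau[\hat\Gamma^n]=\alpha^{-1}\tau[\Gamma^n]$, $\mathcal K[\hat\Gamma^n]=\alpha^{-1}\mathcal K[\Gamma^n]$, followed by the $p$-homogeneity of $T_p$. Your added remark on fixing the orthogonal factor $B$ (harmless by Remark 4.2 and (\ref{ort})) is a fine clarification but changes nothing essential.
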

Cooperation of Lemma 5.9 with (\ref{ccg}) gives a reason to speak about cones of $m$-convex hypersurfaces and to expose a geometric replica of (\ref{sist}):
$$\mathbf{K}_0\supset \mathbf{K}_1\supset \mathbf{K}_2\supset\dots\supset \mathbf{K}_n.$$
Notice that an arbitrary $C^2$-smooth hypersurface is $0$-convex by definition.

It follows from Remark 5.5 that any $m$-convex hypersurface has at least $m$ positive principal curvatures and any $n$-convex hypersurface is strongly convex in the classic sense.

However, Definition 5.8 with $m=n$ is more restrictive than textbook definitions of the strong convexity. Indeed, in classic geometry strongly convex hypersurfaces were introduced as the boundaries of strongly convex domains and might be just Lipschitz continuous. Also, smooth strongly convex hypersurfaces may contain isolated points of the vanishing Gauss curvature, while Definition 5.8 rules out such points when $m=n$. Moreover, a smooth strongly convex hypersurface may contain isolated planar points, where all principal curvatures vanish. If so, a hypersurface is not $1$-convex, let alone $n$-convexity.

Notice that $p$-convexity of boundaries, $p\geqslant 1$, regulates existence and non-existence theorems in FNPDE, while $0$-convexity serves fine the linear theory of second-order partial differential equations. Hence, $p$-convexity of hypersurfaces is vital in applications.

\begin{remark}
In order to remove the problem of orientation from Definition 5.8, it is reasonable to keep to the rule:
choose an arbitrary point $M\in\Gamma ^n$ and direct $\mathbf n^+[\Gamma ^n](M)$ such way that $\mathbf k_1[\Gamma ^n](M)>0$; over $\Gamma ^n$ we construct the field of normals $\{\mathbf n^+[\Gamma ^n]\}$ consistent with $\mathbf n^+[\Gamma ^n](M)$.
The above rule does not work if $\mathbf k_1[\Gamma ^n](M)=0$, wich just means that our hypersurface is $0$-convex at the point $M$. Then we choose another point. Notice that the above recommendation agrees with the preference of interior normals for closed $C^k$-smooth hypersurfaces.
\end{remark}

Consider the examples from Subsection 4.2.

\begin{enumerate}
\item Any sphere $\mathcal {S}^n_R\subset\mathbb R^{n+1}$ is $n$-convex hypersurface. Indeed, the $p$-traces of its curvature matrix, see (\ref{sph}), are
$$\mathbf k_p[\mathcal S^n_R]=\frac{C_n^p}{R^p}>0,\quad p=1,\dots,n,$$
and constructive definition (\ref{ccg}) confirms the $n$-convexity of $\mathcal S^n_R$.
\item Turn to the hyperboloid
\begin{equation}\mathcal {H}^n_R=\{|x|\geqslant R>0, x^2-(x^{n+1})^2=R^2\}\subset\mathbb R^{n+1}.\label{Hp}\end{equation}
Alike the sphere it is $C^\infty$-smooth and symmetric over the plane $x^{n+1}=0$. Therefore, it is sufficient to examine the graph-presentation (\ref{Hgr}) and extend conclusions to (\ref{Hp}) by the symmetry.

We can take $B=I$ in (\ref{gip}) and Remark 5.10 assigns the choice $\mathcal K[\mathcal {H}^n_R]=+\mathcal K_0$, since
$$\mathbf k_1[\mathcal {H}^n_R]=T_1(+\mathcal K_0)=\frac{1}{\sqrt{2x^2-R^2}}\left(n-\frac{2x^2}{2x^2-R^2}\right)=$$
\begin{equation}=\frac{1}{\sqrt{(2x^2-R^2)^3}}(2(n-1)x^2-nR^2)>0,\quad |x|>R,\quad n>1.\label{HT1}\end{equation}

Using the identity (\ref{kos}), we compute
\begin{equation}\mathbf k_m[\mathcal {H}^n_R]=T_m(+\mathcal K_0)=\frac{C_{n-1}^{m-1}}{\sqrt{(2x^2-R^2)^m}}\left(\frac{n}{m}-\frac{2x^2}{2x^2-R^2}\right), \quad m=1,\dots,n.
\label{KHtr}\end{equation}
The following lemma is a simple consequence of (\ref{ccg}) and (\ref{KHtr}).
\begin{lemma}
Let $n>1$, $1\leqslant m<n$ and $\mathcal H^n_R$ be the hyperboloid (\ref{Hp}). Then the hypersurface
\begin{equation}\mathcal H^n_R\cap\left\{x^2>\frac{n}{2(n-m)}R^2\right\}\label{Hpx}\end{equation}
is $m$-convex. In particular, the hypersurface $\mathcal H^n_R$ upgrades to $(n-1)$-convexity when $x^2>nR^2/2$.
\end{lemma}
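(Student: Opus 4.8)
The plan is to invoke the constructive description of the cone of $m$-convex hypersurfaces, namely formula~(\ref{ccg}), which reduces the claim to verifying the sign of each $p$-curvature $\mathbf k_p[\mathcal H^n_R]$ for $p=0,1,\dots,m$ on the region~(\ref{Hpx}). The value $p=0$ is trivial since $\mathbf k_0\equiv1$. For $1\leqslant p\leqslant m$, the key input is the explicit formula~(\ref{KHtr}) for $\mathbf k_p[\mathcal H^n_R]=T_p(+\mathcal K_0)$, which was already derived via the skew-symmetry identity~(\ref{kos}) applied to $\mathcal K_0=\frac{1}{\sqrt{2x^2-R^2}}\left(I-\frac{2x\times x}{2x^2-R^2}\right)$ from~(\ref{gip}). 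Since the constant $C_{n-1}^{p-1}/\sqrt{(2x^2-R^2)^p}$ is strictly positive for $|x|>R$, the sign of $\mathbf k_p$ is governed entirely by the bracketed factor $\frac{n}{p}-\frac{2x^2}{2x^2-R^2}$.

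The heart of the argument is thus an elementary monotonicity observation: the function $p\mapsto n/p$ is strictly decreasing in $p$, so the bracket $\frac{n}{p}-\frac{2x^2}{2x^2-R^2}$ is smallest (hardest to keep positive) at $p=m$, the largest index we must handle. Consequently it suffices to arrange $\frac{n}{m}-\frac{2x^2}{2x^2-R^2}>0$; this then forces $\frac{n}{p}-\frac{2x^2}{2x^2-R^2}>0$ for every $p\leqslant m$ automatically, and all the curvatures $\mathbf k_1,\dots,\mathbf k_m$ are positive together. Solving the inequality $\frac{n}{m}>\frac{2x^2}{2x^2-R^2}$ for $x^2$: multiplying through by the positive quantity $2x^2-R^2$ and rearranging gives $n(2x^2-R^2)>2mx^2$, i.e. $2(n-m)x^2>nR^2$, i.e. $x^2>\frac{n}{2(n-m)}R^2$, which is precisely the defining condition of the region~(\ref{Hpx}); here the hypothesis $m<n$ is what guarantees $n-m>0$ so the division is legitimate and the region is nonempty. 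This establishes $m$-convexity of~(\ref{Hpx}) on the graph portion~(\ref{Hgr}), and the symmetry of $\mathcal H^n_R$ across the hyperplane $x^{n+1}=0$ (noted just before the lemma) extends the conclusion to the full hyperboloid~(\ref{Hp}).

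For the particular case $m=n-1$, substituting into $x^2>\frac{n}{2(n-m)}R^2$ gives $x^2>\frac{n}{2}R^2$, yielding the final sentence of the lemma. I would also remark that the condition $n>1$ is needed for the whole discussion to be non-vacuous, matching the restriction already present in~(\ref{HT1}) and~(\ref{KHtr}). The main (and only real) obstacle here is bookkeeping the direction of the inequality while clearing the denominator $2x^2-R^2$ and confirming it stays positive throughout the region of interest — but since $x^2>\frac{n}{2(n-m)}R^2\geqslant\frac{n}{2(n-1)}R^2>\frac{R^2}{2}$ already forces $2x^2-R^2>0$, no case analysis is required and the proof is a short computation.
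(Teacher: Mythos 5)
Your proof is correct and follows essentially the same route as the paper: it reads off the sign of each $\mathbf k_p[\mathcal H^n_R]$, $1\leqslant p\leqslant m$, from the explicit formula (\ref{KHtr}), notes that positivity at $p=m$ (equivalent to the defining inequality of (\ref{Hpx})) implies positivity for all smaller $p$, and concludes $m$-convexity via the constructive description (\ref{ccg}). Your added details — the monotonicity of $n/p$ in $p$, the explicit algebra clearing the denominator $2x^2-R^2>0$, and the symmetry remark — only make explicit what the paper's shorter argument leaves implicit.
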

\begin{proof}Indeed, the hypersurface (\ref{Hpx}) has positive $m$-curvature (\ref{KHtr}). In addition, if $x$ satisfies inequality (\ref{Hpx}), the similar inequalities with $1\leqslant p<m$ instead of $m$ are also keep. Hence, our hypersurface is $m$-convex via definition (\ref{ccg}).
\end{proof}

There are some specialities when $n=2$ and $n=1$.

Namely, $\mathcal H^2_R$ is just $0$-convex, although upgrades to $1$-convex at the moment $|x|>R$. In total it may be qualified as a non-negative mean curvature hypersurface.

Lemma 5.11 excludes $n=1$, since in Remark 5.10 assigns the choice $\mathcal K[\mathcal {H}^1_R]=-\mathcal K_0$ from (\ref{gip}). Then formula (\ref{HT1}) with $+\mathcal K_0$ substituted on $-\mathcal K_0$ carries out the $1$-curvature of the classic equilateral hyperbola $x^2-y^2=R^2$:
$$\kappa[\mathcal H_R^1]=\frac{R^2}{\sqrt{(x^2+y^2)^3}}.$$
 So, in our classification the classic hyperbola is a $1$-convex curve, that is a curve of the positive curvature $\kappa[\mathcal H_R^1]$ and should be strongly convex in the classic sense.
\end{enumerate}

Other examples can be found in the paper \cite{FB17} dedicated to the $m$-convexity of multidimensional paraboloids and hyperboloids.

Eventually we state one more test of $m$-convexity.
\begin{lemma}
A $C^k$-smooth, $k\geqslant 2$,  hypersurface  $\Gamma^{n}$ is $m$-convex if and only if the following conditions hold:
\begin{enumerate}
\item[(i)] there exists a point $M_0\in\Gamma^{n}$ such that $\Gamma^{n}$ is $m$-convex at $M_0$;

\item[(ii)]  $\mathbf k_m[\Gamma^{n}](M)>0$ at all points $M\in\Gamma^{n}$.
\end{enumerate}
\end{lemma}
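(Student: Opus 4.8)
The plan is to reduce the statement about hypersurfaces to the purely algebraic Lemma 5.2, which says that $K_m$ is exactly the connected component of $\{S\in{\rm Sym}(n):T_m(S)>0\}$ containing any given $m$-positive matrix. Only the ``if'' direction needs work, since the ``only if'' direction is immediate: if $\Gamma^n$ is $m$-convex then by Definition 5.8 every $\mathcal K[\Gamma^n](M)\in K_m$, so in particular $(i)$ holds, and since $K_m\subset\{S:T_m(S)>0\}$ we get $\mathbf k_m[\Gamma^n](M)=T_m(\mathcal K[\Gamma^n](M))>0$ for all $M$, which is $(ii)$.

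For the converse, assume $(i)$ and $(ii)$. First I would recall from Lemma 5.7 (or remark 4.2) that the $p$-curvatures, hence the eigenvalues of the curvature matrix, and in a suitable choice of $\tau$ (say $\tau=\tau_0$, $B=I$) the curvature matrix $\mathcal K_0[\Gamma^n](M)$ itself, depend continuously on $M\in\Gamma^n$; concretely $M\mapsto\mathcal K_0[\Gamma^n](M)$ is a continuous (indeed $C^{k-2}$) map from $\Gamma^n$ into ${\rm Sym}(n)$. By $(ii)$ the image of this map lies entirely in the open set $U=\{S\in{\rm Sym}(n):T_m(S)>0\}$. Since $\Gamma^n$ is connected (it is a surface in the sense of Definition 2.1, which requires connectedness), its continuous image in $U$ is connected and therefore contained in a single connected component of $U$. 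By hypothesis $(i)$, that component contains the point $\mathcal K_0[\Gamma^n](M_0)\in K_m$; by Lemma 5.2 this component is precisely $K_m$. Hence $\mathcal K_0[\Gamma^n](M)\in K_m$ for every $M\in\Gamma^n$, i.e.\ $\Gamma^n$ is $m$-convex at every point, which is the claim. One should note that $m$-convexity at a point is independent of the choice of $B\in{\rm O}(n)$ in $(\ref{tau3})$, since $\mathcal K=B^T\mathcal K_0 B$ and the $p$-traces are orthogonally invariant by $(\ref{ort})$, so working with $\mathcal K_0$ loses no generality.

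The only genuine subtlety, and the step I would flag as the main obstacle, is making precise that $\mathcal K_0[\Gamma^n]$ is a well-defined continuous ${\rm Sym}(n)$-valued function on the whole connected surface $\Gamma^n$, not just locally on a parametrized patch $\Gamma^n\cap B_r(M_0)$. This is exactly the issue foreshadowed in Remark 2.1: characteristics are defined in parametrized neighborhoods, and one must invoke the non-degenerate orientation-preserving compatibility of overlapping local parametrizations to glue these local curvature matrices into a global continuous object. Once that gluing is granted (which the paper has already committed to in Remark 2.1 and in the convention that $\Gamma^n$ is oriented by a global field $\mathbf n^+$), the topological argument above is routine: connectedness plus continuity plus Lemma 5.2 finishes it. I would close by remarking that this lemma is the geometric analogue of Lemma 5.2 and of the earlier-quoted principle that $m$-convexity is a global, connectivity-sensitive notion, mirroring the connectedness hypothesis on $\partial\Omega$ in $(\ref{bc})$.
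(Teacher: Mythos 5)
Your proposal is correct and takes essentially the same route as the paper, which gives no detailed argument but simply remarks that the lemma is ``a geometric replica'' of the algebraic connected-component lemma for $K_m$; your continuity-plus-connectedness reduction, with the observation that $m$-convexity at a point is invariant under the orthogonal conjugations $\mathcal K=B^T\mathcal K_0B$ (so the choice of $B$ and the patchwise definition of the curvature matrix are immaterial), is exactly how that replica is meant to be realized. The gluing subtlety you flag is genuine but harmless, since the scalar invariants $\mathbf k_p$ are globally defined continuous functions and membership in $K_m$ and in $\{T_m>0\}$ is preserved under the orthogonal conjugations occurring on overlaps of parametrized patches.
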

Actually Lemma 5.12 is just a geometric replica of Lemma 5.3. It looks particulary impressive for closed hypersurfaces.
\begin{corollary}
A closed $C^k$-smooth, $k\geqslant 2$,  hypersurface  $\Gamma^{n}$ is $m$-convex if and only if $\mathbf k_m[\Gamma^{n}]>0$.
\end{corollary}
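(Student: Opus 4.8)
The plan is to dispose of one implication instantly and to reduce the other to Lemma~5.12 together with a single explicit point of $m$-convexity supplied by compactness.

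\emph{The easy direction.} If the closed hypersurface $\Gamma^n$ is $m$-convex, then by the constructive description (\ref{ccg}) one has $\mathbf k_p[\Gamma^n](M)>0$ for $p=0,1,\dots,m$ and every $M\in\Gamma^n$; in particular $\mathbf k_m[\Gamma^n]>0$. Nothing further is needed.

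\emph{The converse.} Assume $\mathbf k_m[\Gamma^n]>0$ on all of $\Gamma^n$. This is precisely hypothesis (ii) of Lemma~5.12, so it is enough to produce one point $M_0\in\Gamma^n$ at which $\Gamma^n$ is $m$-convex; Lemma~5.12 then propagates $m$-convexity from $M_0$ to the whole surface. To find such a point I would invoke that $\Gamma^n$, being closed, is compact and bounds a domain $\Omega$ with $\mathbf n^+$ the interior normal, in agreement with remark~5.10. Place the origin inside $\Omega$ and let $M_0$ be a point of $\Gamma^n$ at maximal distance $R=|X[\Gamma^n](M_0)|$ from the origin. Then $\Gamma^n$ lies in the closed ball $\bar B_R(\mathbf 0)$ and touches the sphere $\mathcal S^n_R$ from inside at $M_0$, the interior normals of the two surfaces coinciding there. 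Passing to a common graph-parametrization over the tangent plane at $M_0$ (Lemma~4.3) and comparing the second fundamental forms, the fact that $\Gamma^n$ stays on the concave side of $\mathcal S^n_R$ yields $\mathcal K[\Gamma^n](M_0)\geqslant\mathcal K[\mathcal S^n_R](M_0)=\frac{1}{R}I>0$ by (\ref{sph}). Hence $\mathcal K[\Gamma^n](M_0)$ is positive definite, i.e. lies in $K_n\subset K_m$, so $\Gamma^n$ is $m$-convex (indeed $n$-convex) at $M_0$, as required. Since $\mathbf k_1[\Gamma^n](M_0)>0$ automatically, being the trace of a positive definite matrix, the orientation convention of remark~5.10 is in force and the chosen normal field is coherent over all of $\Gamma^n$.

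\emph{Where the work sits.} The only non-formal step is the curvature comparison at the touching point: one normalizes $M_0=\mathbf 0$ with horizontal tangent plane so that, by (\ref{kr0}) with $B=I$, $\mathcal K^{+}[\Gamma^n](M_0)=\omega_{xx}(\mathbf 0)$ for the local graph $x^{n+1}=\omega(x)$ of $\Gamma^n$ and likewise for $\mathcal S^n_R$, checks that both surfaces carry the same $\mathbf n^+$ there, and reads off the inclusion $\omega_{xx}(\mathbf 0)-\frac{1}{R}I\geqslant 0$ from the containment $\Gamma^n\subset\bar B_R(\mathbf 0)$. Everything else is a direct citation of Lemma~5.12, itself the geometric replica of Lemma~5.3.
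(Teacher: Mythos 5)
Your proof is correct and follows essentially the route the paper intends: the paper states the corollary as an immediate consequence of Lemma~5.12, leaving tacit the fact that a closed hypersurface automatically contains a point satisfying condition $(i)$. Your farthest-point (elliptic point) argument, with the curvature comparison $\mathcal K[\Gamma^n](M_0)\geqslant \frac{1}{R}I$ against the circumscribed sphere via the graph normalization (\ref{kr0}) and (\ref{sph}), supplies exactly that missing detail correctly, including the orientation bookkeeping of Remark~5.10.
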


\subsection{Sylvester criterion for hypersurfaces}
In this Subsection, using Sylvester criterion for $m$-positive matrices (see Lemma 5.4), we prove a new criterion of $m$-convexity for hypersurfaces.

First we deduce an appropriate modification of Sylvester criterion.
\begin{lemma}
Let $S\in {\rm Sym}(n)$, $1\leqslant m\leqslant n$.
\begin{enumerate}
\item[(i)] Suppose $A$ is some $n\times (n-1)$-matrix such that $A^TA=I\in {\rm Sym}(n-1)$. Then
$$S\in K_m(n)\quad\Leftrightarrow\quad T_m(S)>0,\;A^TSA\in K_{m-1}(n-1).$$

\item[(ii)] Suppose $\{A_k\}_{k=1}^{m-1}$ is some collection of $(n-k+1)\times (n-k)$-matrices such that $A_k^TA_k=I\in {\rm Sym}(n-k)$. Let $S_0=S$, $S_k=A_k^TS_{k-1}A_k\in {\rm Sym}(n-k)$. Then
$$S\in K_m(n)\quad\Leftrightarrow\quad T_{m}(S)>0,\;T_{m-1}(S_1)>0,\;T_{m-2}(S_2)>0,\ldots,T_{1}(S_{m-1})>0.$$
\end{enumerate}
\end{lemma}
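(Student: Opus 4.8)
The plan is to reduce Lemma 5.15 to the classical Sylvester criterion of Lemma 5.4 by a change of orthonormal basis. The key observation is that crossing out the $i$-th row and column of a symmetric matrix $S$ is exactly the operation $A^T S A$ where $A$ is the $n\times(n-1)$-matrix obtained from the identity by deleting its $i$-th column; such an $A$ manifestly satisfies $A^TA=I\in{\rm Sym}(n-1)$. So part (i) of Lemma 5.15 generalizes part (i) of Lemma 5.4 from these "coordinate" matrices $A$ to arbitrary $A$ with orthonormal columns, and similarly for (ii). The reason this generalization is legitimate is the orthogonal invariance of $p$-traces, relation (\ref{ort}): if $A$ is any $n\times(n-1)$-matrix with $A^TA=I$, complete its columns to an orthogonal $(n\times n)$-matrix $\tilde A=(A\mid v)$, $v\in\mathbb R^n$, $|v|=1$, $A^Tv=0$. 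Then $\tilde A^T S\tilde A\in{\rm Sym}(n)$ has $A^TSA$ as its leading $(n-1)\times(n-1)$ principal block, i.e. $A^TSA=(\tilde A^TS\tilde A)^{\langle n\rangle}$, and $T_m(\tilde A^TS\tilde A)=T_m(S)$ by (\ref{ort}). Also $S\in K_m(n)\Leftrightarrow \tilde A^TS\tilde A\in K_m(n)$, again because all $p$-traces are preserved. Applying Lemma 5.4(i) to $\tilde A^TS\tilde A$ with index $i=n$ then yields exactly the claimed equivalence in (i).

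First I would record the elementary linear-algebra fact that any $n\times(n-1)$ matrix with orthonormal columns extends to an element of ${\rm O}(n)$, and that deleting the last row/column of $\tilde A^T S\tilde A$ reproduces $A^TSA$. Then I would invoke (\ref{ort}) to transfer both the hypothesis $T_m(S)>0$ and the conclusion $S\in K_m(n)$ across the orthogonal conjugation by $\tilde A$, so that the statement about $S$ becomes the statement about $\tilde A^TS\tilde A$, to which the coordinate version Lemma 5.4(i) applies directly with $i=n$. This proves (i). For (ii) I would iterate (i): given the collection $\{A_k\}$, set $S_k=A_k^TS_{k-1}A_k$ as in the statement; by (i) applied at each stage, $S_{k-1}\in K_{m-k+1}(n-k+1)$ is equivalent to $T_{m-k+1}(S_{k-1})>0$ together with $S_k=A_k^TS_{k-1}A_k\in K_{m-k}(n-k)$. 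Unwinding the recursion from $k=1$ down to $k=m-1$ collapses the chain of equivalences into the single statement that $S\in K_m(n)$ iff $T_m(S)>0$, $T_{m-1}(S_1)>0$, \ldots, $T_1(S_{m-1})>0$; note that $T_1(S_{m-1})>0$ already forces $S_{m-1}\in K_1(n-m+1)$ since $K_1=\{T_1>0\}$, which terminates the induction cleanly.

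I do not anticipate a genuine obstacle here; the lemma is essentially a basis-change reformulation of Lemma 5.4 and the whole content is the orthogonal invariance (\ref{ort}) plus the extension-to-${\rm O}(n)$ fact. The one point requiring a little care is bookkeeping in part (ii): one must check that the dimensions match at every step (so that $A_k$ is indeed $(n-k+1)\times(n-k)$ and the orthonormality $A_k^TA_k=I\in{\rm Sym}(n-k)$ is the right normalization for applying part (i) at level $n-k+1$), and that the order of the indices $m-k+1$ in the traces is correctly tracked as $k$ increases. A secondary subtlety worth a sentence is the direction of the equivalence in part (i): the forward implication ($S\in K_m\Rightarrow T_m(S)>0$ and $A^TSA\in K_{m-1}$) and the converse both follow from Lemma 5.4(i), but one should make explicit that the converse uses the "connectedness" character of $K_m$ encoded in Lemma 5.4 rather than a naive minors argument — this is exactly why the nontrivial Sylvester criterion, and not just the trivial necessity of positive principal minors, is needed.
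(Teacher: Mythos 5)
Your proposal is correct and follows essentially the same route as the paper: the paper also reduces $A^TSA$ to a principal submatrix of an orthogonal conjugate of $S$ (choosing $B\in{\rm O}(n)$ with $BA=\bigl(\begin{smallmatrix}I\\ 0\end{smallmatrix}\bigr)$, which is the same device as your completion $\tilde A=(A\mid v)$, since one may take $B=\tilde A^T$), then applies Lemma 5.4$(i)$ together with the orthogonal invariance (\ref{ort}), and obtains $(ii)$ by iterating $(i)$ exactly as you do.
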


\begin{proof}
The operation $A^TSA$ up to some orthogonal transformation is equivalent to crossing out from matrix $S$ a row and a column with some number $i$. Assume without loss of generality that $i=n$. It is easy to check that for any $n\times (n-1)$-matrix $A$ satisfying  $A^TA=I$ there exists a matrix $B\in{\rm O}(n)$ such that
$$(BA)^i_j=\delta^i_j,\; i,j=1,\dots,n-1, \quad (BA)^n_j=0.$$
Denote $\hat S=BSB^T\in{\rm Sym}(n)$. Then $A^TSA=(AB)^T\hat S(AB)=\hat S^{\langle n\rangle}$ (in notations of Lemma 5.4) and item $(i)$ of Lemma 5.14 follows from item $(i)$ of Lemma 5.4 and the orthogonal invariancy of $p$-traces, (\ref{ort}). Namely,
$$S\in K_m(n)\quad\Leftrightarrow\quad \hat S\in K_m(n) \quad\Leftrightarrow\quad T_m(\hat S)=T_m(S)>0,\;\hat S^{\langle n\rangle}=A^TSA\in K_{m-1}(n-1).$$
The item $(ii)$ turns out a result of several iterations of $(i)$.
\end{proof}

The following geometric version of Sylvester criterion is a straightforward consequence of Definition 4.5, Lemmas 5.14, 5.12 and identity (\ref{BKq}).

\begin{Th} Let $1\leqslant m\leqslant n$, $\Gamma^{n}$ is a $C^k$-smooth oriented hypersurface, $k\geqslant2$.
Assume that $\mathbf k_m[\Gamma^n](M)>0$ at all points $M\in\Gamma^n$.
\begin{enumerate}
\item[(i)] A hypersurface $\Gamma^n$ is $m$-convex if and only if there exist a point $M_0\in\Gamma^n$ and at least one normal $(n-1)$-section $\Gamma^{n-1}$ of $\Gamma^n$ at $M_0$ such that $M_0$ is a point of its $(m-1)$-convexity.

\item[(ii)] A hypersurface $\Gamma^n$ is $m$-convex if and only if there exist a point $M_0\in\Gamma^n$ and at least one sequence of normal sections $\Gamma^{n-m+1}\subset\Gamma^{n-m+2}\dots\subset\Gamma^{n-1}\subset\Gamma^n$ at $M_0$ such that
$$\mathbf k_{m-1}[\Gamma^{n-1}](M_0)>0,\;\mathbf k_{m-2}[\Gamma^{n-2}](M_0)>0,\ldots,\mathbf k_{1}[\Gamma^{n-m+1}](M_0)>0.$$
\end{enumerate}
Moreover, for $m$-convex hypersurfaces assertions $(i)$ and $(ii)$ are valid at all points $M_0\in\Gamma^n$ and for any normal sections.
\end{Th}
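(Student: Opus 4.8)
The plan is to localize the problem at a point and then push the matrix Sylvester criterion of Lemma 5.14 through the identity (\ref{BKq}). By Definition 5.8, $\Gamma^n$ is $m$-convex at $M_0$ precisely when $\mathcal K[\Gamma^n](M_0)\in K_m(n)$, and by Definition 5.6 the standing hypothesis $\mathbf k_m[\Gamma^n](M)>0$ reads $T_m(\mathcal K[\Gamma^n](M))>0$ everywhere. Fix a normal $(n-1)$-section $\Gamma^{n-1}=\Gamma^{n-1}_\eta$ of $\Gamma^n$ at $M_0$, with cartesian allocator $\eta$, an $n\times(n-1)$-matrix satisfying $\eta^T\eta=I$. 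Theorem 4.7 gives $B\,\mathcal K[\Gamma^{n-1}_\eta](M_0)\,B^T=\eta^T\mathcal K[\Gamma^n](M_0)\,\eta$ for some $B\in{\rm O}(n-1)$, and since the $T_p$ are orthogonally invariant by (\ref{ort}), the cone $K_{m-1}(n-1)$ is stable under $B$-conjugation; hence $\mathcal K[\Gamma^{n-1}_\eta](M_0)\in K_{m-1}(n-1)$ is equivalent to $\eta^T\mathcal K[\Gamma^n](M_0)\,\eta\in K_{m-1}(n-1)$. Now Lemma 5.14$(i)$ with $S=\mathcal K[\Gamma^n](M_0)$ and $A=\eta$, together with $T_m(S)>0$ from the hypothesis, yields the pointwise equivalence: $\Gamma^n$ is $m$-convex at $M_0$ if and only if $\Gamma^{n-1}_\eta$ is $(m-1)$-convex at $M_0$, and this holds for every allocator $\eta$.

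To pass from one point to the whole surface I would invoke Lemma 5.12. If $M_0$ is a point of $(m-1)$-convexity for some normal $(n-1)$-section, the equivalence above makes $\Gamma^n$ $m$-convex at $M_0$; combined with $\mathbf k_m[\Gamma^n]>0$ everywhere, Lemma 5.12 forces $\Gamma^n$ to be $m$-convex. Conversely, if $\Gamma^n$ is $m$-convex it is $m$-convex at every point, and then the equivalence shows every normal $(n-1)$-section at every point is $(m-1)$-convex there. This settles $(i)$ and the ``moreover'' clause for $(i)$.

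For $(ii)$ the same step is iterated. A normal section of a normal $(n-1)$-section of $\Gamma^n$ at $M_0$ is again a normal section of $\Gamma^n$ at $M_0$: its defining plane lies inside the previous sectional plane and still contains $\mathbf n^+[\Gamma^n](M_0)=\mathbf n^+[\Gamma^{n-1}](M_0)$ at the point of interest, as required by Definition 4.6. Iterating (\ref{dzK}) --- equivalently, composing the allocator matrices supplied by (\ref{BKq}) into matrices $A_1,A_2,\dots$ with orthonormal columns --- one writes $\mathcal K[\Gamma^{n-j}](M_0)$, up to orthogonal conjugation, as $A_j^T\cdots A_1^T\,\mathcal K[\Gamma^n](M_0)\,A_1\cdots A_j$. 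Feeding this chain into Lemma 5.14$(ii)$ and reading the traces back through (\ref{ort}) as $\mathbf k_{m-j}[\Gamma^{n-j}](M_0)$ identifies the chain of inequalities in $(ii)$ with $m$-positivity of $\mathcal K[\Gamma^n](M_0)$, the first condition $T_m>0$ being the standing hypothesis. A last application of Lemma 5.12 globalizes the statement, and $m$-convexity again covers all points and all admissible chains.

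Most of this is routine unwinding of definitions and orthogonal invariance. The one point that needs genuine care is the bookkeeping in $(ii)$: one must verify that a normal section of a normal section is an honest normal section of $\Gamma^n$ carrying the same distinguished normal at $M_0$, so that Definition 4.6 applies at every stage, and that the successive allocators compose to matrices with orthonormal columns so that Lemma 5.14$(ii)$ applies verbatim. I expect this modest bookkeeping to be the main obstacle; the rest is assembly of the quoted results.
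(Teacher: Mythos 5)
Your proposal is correct and follows essentially the same route as the paper, which derives the theorem as a direct consequence of Definition 4.5, Lemma 5.12, Lemma 5.14 and identity (\ref{BKq}); your write-up simply makes explicit the pointwise reduction via orthogonal invariance (\ref{ort}), the composition of allocators for the iterated sections, and the globalization through Lemma 5.12.
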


\begin{corollary}
Let $\mathbf k_m[\Gamma^n](M)>0$ at all points $M\in\Gamma^n$. Suppose there exist a point $M_0\in\Gamma^n$, an integer $p$, $1\leqslant p\leqslant m-1$, and a normal $(m-p)$-section of $\Gamma^n$ at $M_0$ such that the equality
\begin{equation}\mathbf k_{m-p}[\Gamma^{n-p}](M_0)=0\label{nc}\end{equation}
holds true.  Then the hypersuface $\Gamma^n$ has no points of $m$-convexity.
\end{corollary}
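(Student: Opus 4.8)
The plan is to argue by contraposition, turning the geometric claim into an application of the matrix-level Sylvester criterion. First I would suppose, contrary to the assertion, that $\Gamma^n$ possesses at least one point $M_*$ of $m$-convexity, i.e. $\mathcal K[\Gamma^n](M_*)\in K_m(n)$. Combined with the standing hypothesis $\mathbf k_m[\Gamma^n](M)>0$ for all $M\in\Gamma^n$, this is exactly the pair of conditions $(i)$, $(ii)$ of Lemma 5.12, so $\Gamma^n$ is $m$-convex, hence $m$-convex at \emph{every} point; in particular $\mathcal K[\Gamma^n](M_0)\in K_m(n)$. Note that this upgrading step — from a single point of $m$-convexity to $m$-convexity everywhere — is the conceptual heart of the argument and the reason the hypothesis $\mathbf k_m>0$ on all of $\Gamma^n$ is indispensable.

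Next I would push this $m$-positivity down onto the prescribed normal $(n-p)$-section $\Gamma^{n-p}$ of $\Gamma^n$ at $M_0$, via the tower machinery of Section 4. Following the convention fixed after Definition 4.5, the curvature matrix $\mathcal K[\Gamma^{n-p}](M_0)$ entering $\mathbf k_{m-p}[\Gamma^{n-p}](M_0)=T_{m-p}(\mathcal K[\Gamma^{n-p}](M_0))$ (Definitions 4.6, 5.6) is the one with $\mathbf n^+[\Gamma^{n-p}](M_0)=\mathbf n^+[\Gamma^n](M_0)$, so Theorem 4.7 applies; its identity (\ref{dzK}) gives
$$\mathcal K[\Gamma^{n-p}](M_0)=\zeta(M_0)^T\,\mathcal K[\Gamma^n](M_0)\,\zeta(M_0),\qquad \zeta(M_0)^T\zeta(M_0)=I\in{\rm Sym}(n-p),$$
where $\zeta$ is the $(n-p)$-connection on $\Gamma^n$ via $\Gamma^{n-p}$ from Definition 3.7 (the isometry property $\zeta^T\zeta=I$ being the one already recorded in the proof of Lemma 3.8). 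One could equally start from the statement (\ref{BKq}) of Theorem 4.7 and use the orthogonal invariance (\ref{ort}) of $T_{m-p}$.

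Finally I would invoke the matrix Sylvester criterion. Since $\zeta(M_0)$ is an $n\times(n-p)$-matrix with orthonormal columns, it factors as $\zeta(M_0)=A_1A_2\cdots A_p$ with each $A_k$ an $(n-k+1)\times(n-k)$-matrix satisfying $A_k^TA_k=I$ — this is precisely the column-by-column reduction used in the proof of Lemma 5.14. Applying Lemma 5.14$(i)$ $p$ times (equivalently, using part $(ii)$), and starting from $\mathcal K[\Gamma^n](M_0)\in K_m(n)$, one descends through $K_{m-1}(n-1),K_{m-2}(n-2),\dots$ to
$$\zeta(M_0)^T\,\mathcal K[\Gamma^n](M_0)\,\zeta(M_0)=\mathcal K[\Gamma^{n-p}](M_0)\in K_{m-p}(n-p);$$
the iteration never leaves the admissible range because $1\leqslant p\leqslant m-1$ forces $1\leqslant m-k\leqslant n-k$ at each step. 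Hence $\mathbf k_{m-p}[\Gamma^{n-p}](M_0)=T_{m-p}(\mathcal K[\Gamma^{n-p}](M_0))>0$, which contradicts the assumed equality (\ref{nc}); therefore $\Gamma^n$ has no point of $m$-convexity.

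An alternative, essentially equivalent, route is to embed the prescribed section as the $(n-p)$-dimensional link of a chain of normal sections $\Gamma^{n-m+1}\subset\cdots\subset\Gamma^{n-1}\subset\Gamma^n$ at $M_0$ and to read off $\mathbf k_{m-p}[\Gamma^{n-p}](M_0)>0$ from the ``moreover'' clause of the geometric Sylvester criterion, Theorem 5.15$(ii)$, applied to the $m$-convex $\Gamma^n$. I expect the only genuine bookkeeping in either approach to be the realization of $\Gamma^{n-p}$ inside such a tower, i.e. choosing a flag of normal sectional planes through $\mathcal P^{\,n-p+1}_\eta[\Gamma^n](M_0)$ that all contain $\mathbf n^+[\Gamma^n](M_0)$; everything else is a routine application of the machinery already assembled.
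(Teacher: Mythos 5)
Your argument is correct and follows essentially the paper's intended route: the paper treats this corollary as an immediate consequence of Lemma 5.12 together with the geometric Sylvester criterion (Theorem 5.15, via (\ref{BKq}) and Lemma 5.14), which is exactly the contrapositive chain you carry out, with your first route merely unpacking the proof of Theorem 5.15 and your second route citing it directly. Nothing is missing; the orientation convention for the section's normal and the embedding of $\Gamma^{n-p}$ into a flag of normal sections are the only points needing care, and you address both.
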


Let us remark that for closed $C^k$-smooth hypersurfaces equality (\ref{nc}) is incompatible with the inequality $\mathbf k_m[\Gamma^n](M)>0$, $M\in\Gamma^n$, see Corollary 5.13. Hence, only unclosed smooth hypersurfaces may satisfy the conditions of Corollary 5.16.

\section{Application of new invariants to FNPDE}

\subsection{Cone of $m$-admissible functions}

In this section we deal with functions $u\in C^2(\bar\Omega)$, where $\Omega\subset\mathbb{R}^n$ is a bounded domain, $\partial\Omega\in C^4$. Denote by $u_x$ the gradient, by $u_{xx}$ the Hessian matrix of $u$ respectively. Introduce for $1\leqslant m\leqslant n$ the $m$-Hessian operators $T_m[u]=T_m(u_{xx})$ and set up in $\bar\Omega$ the Dirichlet problem:
\begin{equation}T_m[u]=f^m>0,\quad u\arrowvert _{\partial\Omega}=\varphi.\label{hes}\end{equation}
The $m$-Hessian equations (\ref{hes}) are fully nonlinear when $m>1$. They appeared in 1983 in the paper
\cite{I83} as a natural generalization of well known Monge -- Ampere equation, $m=n$, and were named of {\it Monge -- Ampere type}. But in contrast to $m=n$ there are no solvability conditions of (\ref{hes}) on the set of convex in $\bar\Omega$ functions for $m<n$. The natural sets of the classic solvability of (\ref{hes}) for $m<n$ were described and named as {\it cones of stability} in
\cite{I83}. The crucial step was to introduce the matrix cones $K_m$, see Definition 5.1, and to extend them to $C^2(\bar\Omega)$:
\begin{equation}\mathbb{K}_m(\bar\Omega)=\{u\in C^2(\bar\Omega): T_p[u](x)>0,\;p=0,1,\ldots,m, \;x\in\bar\Omega\}\label{fKm}\end{equation}
For the cones (\ref{fKm}) a functional analog of inclusions (\ref{sist}) is satisfied.

The first attempt of general approach to the theory of FNPDE was performed by L. Caffarelly, L. Nirenberg, J. Spruck in 1985,
\cite{CNS85}.
In particular, there were described new geometric conditions onto the boundary that are necessary for solvability of (\ref{hes}). Also there was introduced the notion of ``admissible function'' for some wide class of FNPDE. We reduce this notion to $m$-Hessian operators in the following definition.

\begin{Def}\label{mdop}
Let $0\leqslant m\leqslant n$. A function $u\in C^2(\bar\Omega)$ is $m$-{\it admissible} at $x\in\bar\Omega$ if the matrix $u_{xx}$ is $m$-positive at this point. A function $u$ is $m$-admissible in $\bar\Omega$ if it is $m$-admissible at all $x\in\bar\Omega$.
\end{Def}

It is obvious that the set of $m$-admissible in $\bar\Omega$ functions coincides with functional cone (\ref{fKm}).

An arbitrary $C^2$-smooth function is $0$-admissible by definition. The graph of any $n$-admissible function is a smooth strictly convex ($n$-convex in our terms) hypersurface in $\mathbb{R}^{n+1}$. However, for $1\leqslant m<n$ the similar assertion is not true.

The following statement is a  simple consequence of Lemma 5.3.

\begin{lemma}\label{com1}
A function $u\in C^2(\bar\Omega)$ is $m$-admissible in $\bar\Omega$ if and only if there exists a point $x_0\in\bar\Omega$ such that $u_{xx}(x_0)\in K_m$ and $T_m[u]>0$ for all points $x\in\bar\Omega$.
\end{lemma}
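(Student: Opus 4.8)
The plan is to reduce the statement to Lemma 5.3 by a standard connectedness argument, exploiting that $K_m$ is precisely the connected component of the open set $\mathcal U_m=\{S\in{\rm Sym}(n):T_m(S)>0\}$ through any of its points.

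First I would dispose of the easy implication. If $u$ is $m$-admissible in $\bar\Omega$, then by Definition \ref{mdop} the matrix $u_{xx}(x)$ lies in $K_m$ for every $x\in\bar\Omega$; in particular $T_m[u](x)=T_m(u_{xx}(x))>0$ throughout $\bar\Omega$, and any point $x_0\in\bar\Omega$ serves as the required point.

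For the converse, suppose $u_{xx}(x_0)\in K_m$ and $T_m[u](x)>0$ for all $x\in\bar\Omega$. Since $u\in C^2(\bar\Omega)$, the map $x\mapsto u_{xx}(x)$ is continuous from $\bar\Omega$ into ${\rm Sym}(n)$, and by hypothesis it carries $\bar\Omega$ into $\mathcal U_m$. As $\Omega$ is a domain, $\bar\Omega$ is connected, hence its image $\{u_{xx}(x):x\in\bar\Omega\}$ is a connected subset of $\mathcal U_m$ that contains the $m$-positive matrix $u_{xx}(x_0)$. By Lemma 5.3 the connected component of $\mathcal U_m$ containing $u_{xx}(x_0)$ is exactly $K_m$; a connected subset of $\mathcal U_m$ meeting $K_m$ must lie inside $K_m$ (its union with $K_m$ is connected, so maximality of the component forces the inclusion). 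Therefore $u_{xx}(x)\in K_m$ for every $x\in\bar\Omega$, i.e.\ $u$ is $m$-admissible in $\bar\Omega$.

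There is no real obstacle here: the only points worth a remark are the continuity of $x\mapsto u_{xx}(x)$, which is immediate from $u\in C^2(\bar\Omega)$, and the connectedness of $\bar\Omega$, which follows from $\Omega$ being a domain. All the substance is already packaged in Lemma 5.3, so this lemma is genuinely a corollary of it together with elementary topology.
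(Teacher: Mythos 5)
Your argument is correct and coincides with the paper's approach: the paper states the lemma as a ``simple consequence of Lemma 5.3'' without writing out details, and your connectedness argument (continuous image of the connected set $\bar\Omega$ inside $\{T_m>0\}$ meeting the component $K_m$, hence contained in it) is exactly the reasoning that remark leaves implicit. Nothing is missing.
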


So, the cones of $m$-admissible functions (\ref{fKm}) as well as the sets of $m$-convex hypersurfaces (\ref{ccg}) are generated by the algebraic cones of $m$-positive matrices (\ref{Km}) and interaction of those is natural but not trivial. An analysis of this interaction has been started in the paper
\cite{IF13}
and here we present some updated samples.

\begin{Th}
Let $\partial\Omega\in C^{4+\alpha}$, $f\in C^{2+\alpha}(\bar\Omega)$, $f\geqslant\nu>0$, $0<\alpha<1$. Assume that $\varphi={\rm C}={\rm const}$.
\begin{enumerate}
\item[(i)] If $\partial\Omega$ is $(m-1)$-convex, i.e.,
\begin{equation}\mathbf k_{m-1}[\partial\Omega]>0,\label{geom}\end{equation}
there exists a unique in $C^2(\bar\Omega)$ solution $u$ to the problem (\ref{hes}) for odd $m$ and there are exactly two solutions $u$, $-u+2{\rm C}$ for even $m$. Moreover, $u$ is $m$-admissible in $\bar\Omega$.
\vskip .1in
\item[(ii)] If there is a point $M_0\in\partial\Omega$ such that $\mathbf k_{m-1}[\partial\Omega](M_0)=0$, there are no solutions in $C^2(\bar\Omega)$ to the problem (\ref{hes}), whatever smooth $f$ has been. This is equivalent to
$$\{u\in C^2(\bar\Omega): \;u\arrowvert_{\partial\Omega}={\rm const},\; T_m[u]>0\}=\varnothing.$$
\end{enumerate}
\end{Th}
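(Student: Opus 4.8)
The plan is to reduce the whole statement to the classical Caffarelli--Nirenberg--Spruck theorem (Theorem stated in the Introduction with $\sigma_m\equiv T_m$) together with the interaction lemmas already established, so the work splits into three parts: (i) geometric reinterpretation of condition (\ref{bc}), (ii) the existence/uniqueness/multiplicity count, and (iii) the non-existence part.

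First I would handle the geometric translation. By Lemma 5.7 the quantity $\mathbf k_{m-1}[\partial\Omega]$ is exactly $\sigma_{m-1}(\kappa_1,\dots,\kappa_{n-1})$, so hypothesis (\ref{geom}) is literally the CNS boundary condition (\ref{bc}). Here $\partial\Omega$ is oriented by the interior normal, which is the standing convention (Remark 5.10), and connectedness of $\partial\Omega$ is part of our surface definition (Definition 2.1). Hence under (\ref{geom}) the CNS theorem with $f=f^m\in C^{2+\alpha}$, $\varphi\equiv\mathrm{C}$, produces an admissible solution $u\in C^\infty(\bar\Omega)$; admissibility here means $u_{xx}\in K_m$ pointwise, i.e. $u\in\mathbb{K}_m(\bar\Omega)$ in the notation (\ref{fKm}), which is the claimed $m$-admissibility. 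For the multiplicity statement I would observe that $T_m$ is homogeneous of degree $m$ under $u_{xx}\mapsto -u_{xx}$: $T_m(-S)=(-1)^mT_m(S)$. So if $m$ is odd, $v=-u+2\mathrm{C}$ satisfies $T_m[v]=-f^m<0$, hence is not a solution, and the CNS uniqueness among admissible functions plus the fact that any $C^2$ solution of $T_m[u]=f^m>0$ with $u|_{\partial\Omega}=\mathrm{const}$ is forced to be $m$-admissible (this is where part (ii), via its displayed emptiness statement, feeds back) gives uniqueness in all of $C^2(\bar\Omega)$. If $m$ is even, $v=-u+2\mathrm{C}$ does solve $T_m[v]=f^m$ with $v|_{\partial\Omega}=\mathrm{C}$, and $v\neq u$ unless $u\equiv\mathrm{C}$ which is incompatible with $f^m>0$; conversely any $C^2$ solution $w$ has $\pm w_{xx}$ with $T_m>0$, and by Lemma 6.2 (Lemma \ref{com1}) applied to $w$ and to $-w$ one of $w,-w+2\mathrm{C}$ lands in $\mathbb{K}_m(\bar\Omega)$ and is therefore the CNS solution, so there are exactly the two listed.

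Next, part (ii): non-existence when $\mathbf k_{m-1}[\partial\Omega](M_0)=0$ at some $M_0$. This is the necessity half of the CNS theorem (its last sentence, valid precisely because $\varphi\equiv\mathrm{const}$), again after the Lemma 5.7 translation; I would cite it directly. The only thing needing a separate remark is the rephrasing as emptiness of $\{u\in C^2(\bar\Omega):u|_{\partial\Omega}=\mathrm{const},\ T_m[u]>0\}$: this follows because the affine change $u\mapsto -u+2\mathrm{C}$ together with multiplication by a positive constant shows that existence of any $C^2$ function with $T_m[u]>0$ of either sign and constant boundary data would contradict the CNS necessity statement (for even $m$ directly; for odd $m$ after flipping sign one gets $T_m<0$, so one instead notes $-u$ has $T_m[-u]>0$ with constant boundary data, reducing to the same contradiction). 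In particular this justifies the claim used in part (i) that every $C^2$ solution with constant boundary data is $m$-admissible: if it were not, Lemma 6.2 would be violated, but more simply, were (\ref{geom}) to fail the set is empty, and were it to hold the CNS solution is the unique/almost-unique admissible one.

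The main obstacle I anticipate is not any single deep estimate --- all the hard PDE analysis is imported from CNS --- but rather the careful bookkeeping in part (i) for even $m$: showing that the two functions $u$ and $-u+2\mathrm{C}$ exhaust \emph{all} $C^2(\bar\Omega)$ solutions, not merely the admissible ones. The subtlety is that $T_m[u]=f^m>0$ does not by itself force $u_{xx}\in K_m$ (only $T_m>0$, one connected component among several by Lemma 5.2); one must argue that with constant boundary data the sign configuration is pinned down, which is exactly where the necessity direction (part (ii)) and Lemma 6.2 must be invoked in tandem. Making that circular-looking dependence between (i) and (ii) into a clean linear argument --- prove the emptiness dichotomy first, then read off both existence and the multiplicity count --- is the delicate point of the write-up.
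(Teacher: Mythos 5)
Your overall strategy --- translate (\ref{geom}) into the CNS boundary condition via Lemma 5.7 and import existence and uniqueness of the $m$-admissible solution from \cite{CNS85}/\cite{F11} --- is indeed how the paper treats this statement (it gives no self-contained proof, presenting the result as a sample from \cite{IF13} and pointing to Theorem 1.1 and its refinement in \cite{F11}). But your completion of part (i), the count of \emph{all} $C^2(\bar\Omega)$ solutions, has a genuine hole, and you flag it yourself without closing it. You assert that for any $C^2$ solution $w$ with constant boundary data ``by Lemma~\ref{com1} applied to $w$ and to $-w$ one of $w$, $-w+2\mathrm{C}$ lands in $\mathbb K_m(\bar\Omega)$.'' Lemma~\ref{com1} needs an anchor: a single point where $w_{xx}$ (or $-w_{xx}$) actually lies in $K_m$; the pointwise inequality $T_m[w]>0$ alone does not supply one. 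Your fallback --- letting the emptiness statement of part (ii) ``feed back'' into part (i) --- is a non sequitur: part (ii) applies only when $(m-1)$-convexity fails at some boundary point, whereas in part (i) it holds everywhere and the set in question is manifestly nonempty (the CNS solution belongs to it). So the ``circular-looking dependence'' you mention cannot be straightened out in the form you propose; as written, uniqueness in $C^2(\bar\Omega)$ for odd $m$ and the exactness of the two-element list for even $m$ are unproved.

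The missing idea is an interior-extremum argument, which is exactly where the constancy of $\varphi$ enters. Since $T_m[w]=f^m>0$, $w\not\equiv\mathrm{C}$, so $w$ attains an interior extremum unless it is monotonically pinned to the boundary value; at an interior minimum $w_{xx}\geqslant0$, and a positive semidefinite matrix with $T_m>0$ has at least $m$ positive eigenvalues, hence lies in $K_m$ by (\ref{sim}) --- this is the anchor point for Lemma~\ref{com1}, giving $w\in\mathbb K_m(\bar\Omega)$ and then $w=u$ by uniqueness among admissible solutions. At an interior maximum $w_{xx}\leqslant0$, so $T_m(w_{xx})\leqslant0$ for odd $m$ (ruling this case out and yielding uniqueness), while for even $m$ the same anchor argument applied to $-w+2\mathrm{C}$ shows $w=-u+2\mathrm{C}$; the case that $w$ has no interior extremum forces $w\equiv\mathrm{C}$, which is impossible. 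With this inserted, your part (i) is complete and part (ii) can stand on the cited CNS necessity (or, more in the spirit of this paper, on Theorem 6.7 applied to a sub-barrier kernel built from a hypothetical solution), modulo the minor point that a function in the displayed set only produces a continuous, not $C^{2+\alpha}$, right-hand side, so the equivalence with ``whatever smooth $f$'' deserves a sentence rather than a direct citation.
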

Concerning part $(i)$, the $(m-1)$-convexity of $\partial\Omega$ provides the existence and the uniqueness of $m$-admissible solution in presence of arbitrary sufficiently smooth $\varphi$ in (\ref{hes}). For $C^\infty$ data this was established in 1985,
\cite{CNS85}, see Theorem 1.1 in our Section 1. In  2011  N. V. Filimonenkova,
\cite{F11},
investigated the classic and the weak (approximate) solvability of (\ref{hes}) in the cone of $m$-admissible functions. Theorem 1.1 was updated in \cite{F11} and took the following form.
\begin{Th}\label{ex}
Let $\varphi\in C^{l+\alpha}(\partial\Omega)$, $f\in C^{l-2+\alpha}(\bar\Omega)$, $\partial\Omega\in C^{l+\alpha}$, $f>0$ in $\bar\Omega$, $l\geqslant 4$, $0<\alpha<1$. Suppose, in addition, (\ref{geom}) holds.
Then there exists a unique $m$-admissible solution $u\in C^{l+\alpha}(\bar\Omega)$ of (\ref{hes}).
\end{Th}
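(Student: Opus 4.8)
The plan is to prove Theorem \ref{ex} by the method of continuity: fix $\partial\Omega$, deform the data, and reduce solvability to a priori estimates in $C^{2,\alpha}(\bar\Omega)$ for $m$-admissible solutions that are uniform along the deformation. Concretely I would introduce the family of Dirichlet problems
$$T_m[u_t]=(1-t)\,T_m(2I)+t\,f^m\ \hbox{ in }\ \Omega,\qquad u_t|_{\partial\Omega}=(1-t)\,|x|^2+t\,\varphi,\qquad t\in[0,1],$$
whose right-hand side is positive and of class $C^{l-2+\alpha}(\bar\Omega)$ and whose boundary datum is of class $C^{l+\alpha}(\partial\Omega)$; at $t=0$ the explicit $m$-admissible solution is $u_0(x)=|x|^2$, while at $t=1$ we recover (\ref{hes}). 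Let $\mathcal T\subset[0,1]$ be the set of $t$ for which an $m$-admissible solution $u_t\in C^{l+\alpha}(\bar\Omega)$ exists; then $0\in\mathcal T$, and it remains to show that $\mathcal T$ is open and closed.

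For openness I would linearize. At an $m$-admissible solution $u_t$ the Fr\'echet derivative of $u\mapsto\bigl(T_m[u]-f_t,\ u|_{\partial\Omega}-\varphi_t\bigr)$, as a map $C^{2,\alpha}(\bar\Omega)\to C^{\alpha}(\bar\Omega)\times C^{2,\alpha}(\partial\Omega)$, is $v\mapsto\bigl(T_m^{ij}(u_{t,xx})v_{ij},\ v|_{\partial\Omega}\bigr)$. By the ellipticity inequality (\ref{ellip}) the operator $L_tv=T_m^{ij}(u_{t,xx})v_{ij}$ is uniformly elliptic on the compact set $u_{t,xx}(\bar\Omega)\subset K_m$, has $C^{\alpha}$ coefficients, and carries no zero-order term, so its Dirichlet problem is uniquely solvable in $C^{2,\alpha}(\bar\Omega)$ by linear Schauder theory. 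Hence the derivative is an isomorphism and the implicit function theorem produces a relative neighborhood of $t$ contained in $\mathcal T$.

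Closedness is where the real work lies, and it amounts to a priori bounds uniform in $t$ and over $m$-admissible solutions. A $C^0(\bar\Omega)$ bound comes by comparison with barriers built from $\varphi$ and $\sup_{\bar\Omega}f$. The $C^1(\bar\Omega)$ bound is supplied by the barrier technique developed in the following subsections, and it is precisely here that the hypothesis $\mathbf k_{m-1}[\partial\Omega]>0$ is used: it guarantees at every boundary point the existence of the kernel of local sub-barriers, a property shown to be equivalent to $(m-1)$-convexity of $\partial\Omega$. The $C^2(\bar\Omega)$ bound splits into an interior Hessian estimate --- obtained by differentiating the equation twice and combining the ellipticity (\ref{ellip}) with the concavity inequality (\ref{con}) for $F_m=T_m^{1/m}$ --- and a boundary Hessian estimate, where $(m-1)$-convexity of $\partial\Omega$ is again invoked to control the double-normal second derivative. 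Once $\|u_t\|_{C^2(\bar\Omega)}$ is bounded the equation is uniformly elliptic on the compact set of attained Hessians, and since $F_m$ is a concave elliptic operator on $K_m$ the Evans--Krylov theorem (and its boundary analogue) furnishes a uniform $C^{2,\alpha}(\bar\Omega)$ bound; differentiating the equation and iterating linear Schauder estimates, with $f\in C^{l-2+\alpha}$ and $\partial\Omega\in C^{l+\alpha}$, then upgrades this to a uniform $C^{l+\alpha}(\bar\Omega)$ bound. Given $t_k\to t^*$ in $\bar{\mathcal T}$ with solutions $u_{t_k}$, the Arzel\`a--Ascoli theorem extracts a subsequence converging in $C^{l+\alpha'}(\bar\Omega)$ to a limit $u_{t^*}$ which satisfies the equation at $t^*$ with $T_m[u_{t^*}]>0$ and with Hessian in $\bar K_m$ everywhere, hence is $m$-admissible by Lemma \ref{com1}; thus $t^*\in\mathcal T$, so $\mathcal T=[0,1]$ and $1\in\mathcal T$.

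Uniqueness follows from the comparison principle. If $u,\tilde u$ are two $m$-admissible solutions, then along the segment $u_s=(1-s)u+s\tilde u$, which stays in the convex cone $K_m$, one has $0=T_m[\tilde u]-T_m[u]=a^{ij}(\tilde u-u)_{ij}$ with $a^{ij}=\int_0^1 T_m^{ij}(u_{s,xx})\,ds$; by (\ref{ellip}) the matrix $(a^{ij})$ is positive definite, and since $\tilde u-u$ vanishes on $\partial\Omega$ the maximum principle for this linear elliptic operator (no zero-order term) forces $\tilde u\equiv u$. The main obstacle is the a priori $C^2$ estimate --- and, within it, the boundary double-normal second-derivative bound, which is the genuinely delicate step that consumes the geometric hypothesis on $\partial\Omega$ and for which the $C^1$ estimate of the following subsections is only the first step; the interior estimates and the Evans--Krylov and Schauder steps are, by comparison, standard machinery.
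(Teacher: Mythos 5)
Note first that the paper itself contains no proof of Theorem \ref{ex}: it is quoted as a known result, due to Caffarelli--Nirenberg--Spruck \cite{CNS85} and, in the stated form, to Filimonenkova \cite{F11}, and Section 6 of the paper deliberately supplies only one ingredient of that proof, the a priori $C^1$ bound (\ref{esC1}) obtained from the sub-barrier kernel, described as the first step toward the $C^2$ estimate and the continuity method. Your outline follows exactly the route of those references: continuity method starting from $u_0=|x|^2$, openness by linearization and Schauder theory using the ellipticity (\ref{ellip}), closedness by uniform a priori bounds, Evans--Krylov plus bootstrap for higher regularity, and uniqueness through the comparison argument along the segment in the convex cone. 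The skeleton, the uniqueness paragraph and the identification of where (\ref{geom}) enters are all correct.

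The genuine gap is that the central and by far hardest ingredient, the global a priori $C^2(\bar\Omega)$ estimate, is dispatched in one sentence. In \cite{CNS85} and \cite{F11} this is the bulk of the work: after the tangential--tangential boundary estimate coming from the boundary data, one must (a) bound the mixed tangential--normal second derivatives by constructing barriers for the differentiated equation $F^{ij}_m[u](\partial_k u)_{ij}=\partial_k f$, built on the same local sub-barrier $W$ and again consuming (\ref{geom}); and (b) bound the double-normal derivative $u_{\mathbf n\mathbf n}$, which is not obtained by ``invoking'' $(m-1)$-convexity but by proving a strictly positive lower bound for $T_{m-1}$ of the tangential Hessian block on $\partial\Omega$ (this is where (\ref{geom}) and the already established bounds interact), so that the equation itself can be solved for $u_{\mathbf n\mathbf n}$; one must also cite the boundary version of Evans--Krylov (Krylov's estimate) for $C^{2,\alpha}$ up to $\partial\Omega$. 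There are also smaller loose ends you should close: in the openness step, that a small $C^{2,\alpha}$ perturbation of an $m$-admissible $u_t$ stays in the open cone $K_m$ and that the $C^{2,\alpha}$ solution produced by the implicit function theorem bootstraps to $C^{l+\alpha}$; and in the closedness step, that the limit Hessian a priori lies only in $\bar K_m$, so admissibility of the limit needs Lemma \ref{com1} (or Lemma 5.3) together with the Maclaurin-type inequalities (\ref{macl}) on the closed cone, as you gesture at. As written, then, your text is a correct roadmap coinciding with the proof the paper relies on by citation, but it is not yet a proof: the boundary $C^2$ estimate, precisely the step the paper says its $C^1$ bound is only preparing, is missing.
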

Notice that the requirement (\ref{geom}) is not necessary for the solvability of (\ref{hes}) with non-constant Dirichlet data.

Due to (\ref{ellip}) the operator $T_m[u]$ is elliptic on functions $u\in\mathbb{K}_m(\bar\Omega)$. More precisely, if $f>0$ in $\bar\Omega$ then $T_m[u]$ is uniformly elliptic on solutions of (\ref{hes}) in presence of a priory estimates in $C^2(\bar\Omega)$.
Construction of these estimates is the most essential part of the proof of existence theorems in the theory of FNPDE.

In linear theory the basis of a priory estimates for solutions to elliptic and parabolic equations is the well known maximum principle, while in the theory of FNPDE it is more natural to exploit the monotonicity of operators via comparison theorems. To formulate one of them we substitute the $m$-traces $T_m$ onto the following $1$-homogeneous operators:
$$F_m[u]=\left(T_m[u]\right)^{\frac{1}{m}},\quad u\in\mathbb K_m(\bar\Omega).$$
Associate with them the $u$-set of the following linear elliptic operators
\begin{equation}\label{LF}L[v;u]=F^{ij}_m[u]v_{ij},\quad F_m^{ij}[u]=\frac{\partial F_m(u_{xx})}{\partial u_{ij}},\quad v\in C^2(\Omega).\end{equation}
\begin{lemma}
Let $u,w\in\mathbb K_m(\Omega)\cap C(\bar\Omega)$, $v\in C^2(\Omega)\cap C(\bar\Omega)$, $\mu>0$. Assume that
\begin{equation}L[v;u]\leqslant\mu,\quad F_m[w]\geqslant\mu.\label{cps1}\end{equation}
Then
\begin{equation}v(x)-w(x)\geqslant\min_{\partial\Omega}(v-w),\quad x\in\Omega.\label{v-w}\end{equation}
\end{lemma}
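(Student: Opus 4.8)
The statement is a comparison principle of maximum-principle type, and the natural strategy is to reduce the nonlinear inequality on $w$ to a linear one on $w$ with the \emph{same} operator $L[\,\cdot\,;u]$ that governs $v$, and then invoke the classical minimum principle for the uniformly elliptic operator $L[\,\cdot\,;u]$ (which has no zeroth-order term). First I would exploit concavity: by inequality (\ref{con}) applied with $S=w_{xx}$ and $\tilde S=u_{xx}$, both in $K_m$, we have $F_m[w]\leqslant F_m^{ij}[u]\,w_{ij}=L[w;u]$. Combined with the hypothesis $F_m[w]\geqslant\mu$ this yields $L[w;u]\geqslant\mu$. Subtracting the hypothesis $L[v;u]\leqslant\mu$ gives
\begin{equation*}L[v-w;u]=L[v;u]-L[w;u]\leqslant\mu-\mu=0\quad\text{in }\Omega.\end{equation*}

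Next I would note that $L[\,\cdot\,;u]$ is a linear second-order operator with coefficient matrix $\big(F_m^{ij}[u]\big)$ which, by (\ref{ellip}) (together with the concavity/positivity structure of $F_m$ on $K_m$), is positive definite at every point of $\Omega$ since $u\in\mathbb K_m(\bar\Omega)$; hence $L[\,\cdot\,;u]$ is elliptic in $\Omega$, and it has no zeroth-order term. The function $\psi:=v-w$ lies in $C^2(\Omega)\cap C(\bar\Omega)$ and satisfies $L[\psi;u]\leqslant 0$, i.e. $\psi$ is a supersolution. By the weak minimum principle for elliptic operators without zeroth-order term, a supersolution attains its minimum over $\bar\Omega$ on the boundary: $\psi(x)\geqslant\min_{\partial\Omega}\psi$ for all $x\in\Omega$. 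Spelling this out gives precisely (\ref{v-w}).

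The only genuine subtlety is making the minimum principle applicable, since ellipticity of $L[\,\cdot\,;u]$ is only guaranteed to be \emph{pointwise} positive, not uniform up to $\partial\Omega$, and the coefficients need only be continuous (or bounded measurable) on $\Omega$; but the classical weak maximum principle for operators $a^{ij}(x)D_{ij}$ with $(a^{ij})>0$ on $\Omega$ and no lower-order terms holds under exactly these hypotheses, so there is no real obstacle — one merely has to state the invoked result carefully. If one prefers to avoid citing a measurable-coefficient version, one may first prove the strict inequality on $\Omega$ by a standard perturbation $v\mapsto v-\varepsilon|x|^2$ to obtain $L[\psi_\varepsilon;u]<0$, apply the elementary fact that a $C^2$ function with $L\psi_\varepsilon<0$ can have no interior minimum, and then let $\varepsilon\to 0$. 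Either route is routine; the heart of the matter is the single concavity step turning $F_m[w]\geqslant\mu$ into $L[w;u]\geqslant\mu$.
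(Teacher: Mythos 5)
Your proposal is correct and follows essentially the same route as the paper: the single concavity step (\ref{con}) giving $F_m[w]\leqslant L[w;u]$, followed by the classical maximum/minimum principle for the elliptic operator $L[\,\cdot\,;u]$ without zeroth-order term. The paper states exactly this in two lines; your write-up merely fills in the routine details.
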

Indeed,  due to concavity of $F_m$ in $\mathbb K_m$, see (\ref{con}), we have $F_m[w]\leqslant L[w;u]$. Since operators (\ref{LF}) are elliptic, Lemma 6.5 is just a version of the classic maximum principle.

The inequality (\ref{v-w}) reduces the problem of a priori estimates to construction of those at the boundary via $m$-admissible function $w$. We call it an {\it $m$-admissible sub-barrier}. Local boundary sub-barriers can be constructed explicitly. The requirement (\ref{geom}) and the above geometric ideas in fact appeared in this context.

\subsection{Construction of local sub-barriers}

In order to precisely indicate the origin of requirement (\ref{geom}) we introduce the notion of kernel of local sub-barriers. It was first introduced in \cite{IF14fix}.

To begin we associate with a point $M_0\in\partial\Omega$ a domain $\Omega_r$:
\begin{equation}\Omega_r\subset \Omega\cap B_r(M_0),\quad \partial\Omega_r\cap \partial\Omega=\partial\Omega\cap B_r(M_0),\quad 0<r\ll1.\label{sd}\end{equation}

\begin{Def}
We call a function $W$ an $m$-{\it Hessian kernel
of local sub-barriers} at $M_0\in\partial\Omega$
if there is a domain (\ref{sd}) such that $W\in\mathbb K_m(\bar{\Omega}_r)$ and
\begin{equation}W(M_0)=0,\quad W\arrowvert_{\partial\Omega_r}\leqslant 0,\quad W\arrowvert_{\partial\Omega_r\cap\Omega}\leqslant-1.\label{cb}\end{equation}
\end{Def}
Since $W$ is $m$-admissible in $\bar{\Omega}_r$, it has no maximum in ${\Omega}_r$. The following properties of $W$ are an obvious consequence of (\ref{cb}):
\begin{equation}W(x)\leqslant 0,\;x\in\bar{\Omega}_r,\quad (W_x,\mathbf n^+[\partial\Omega])(M_0)\leqslant0, \label{pW}\end{equation}
where $\mathbf n^+[\partial\Omega]$ is the interior normal (see Fig.2).

\begin{figure}
	\center{\includegraphics[width=8cm]{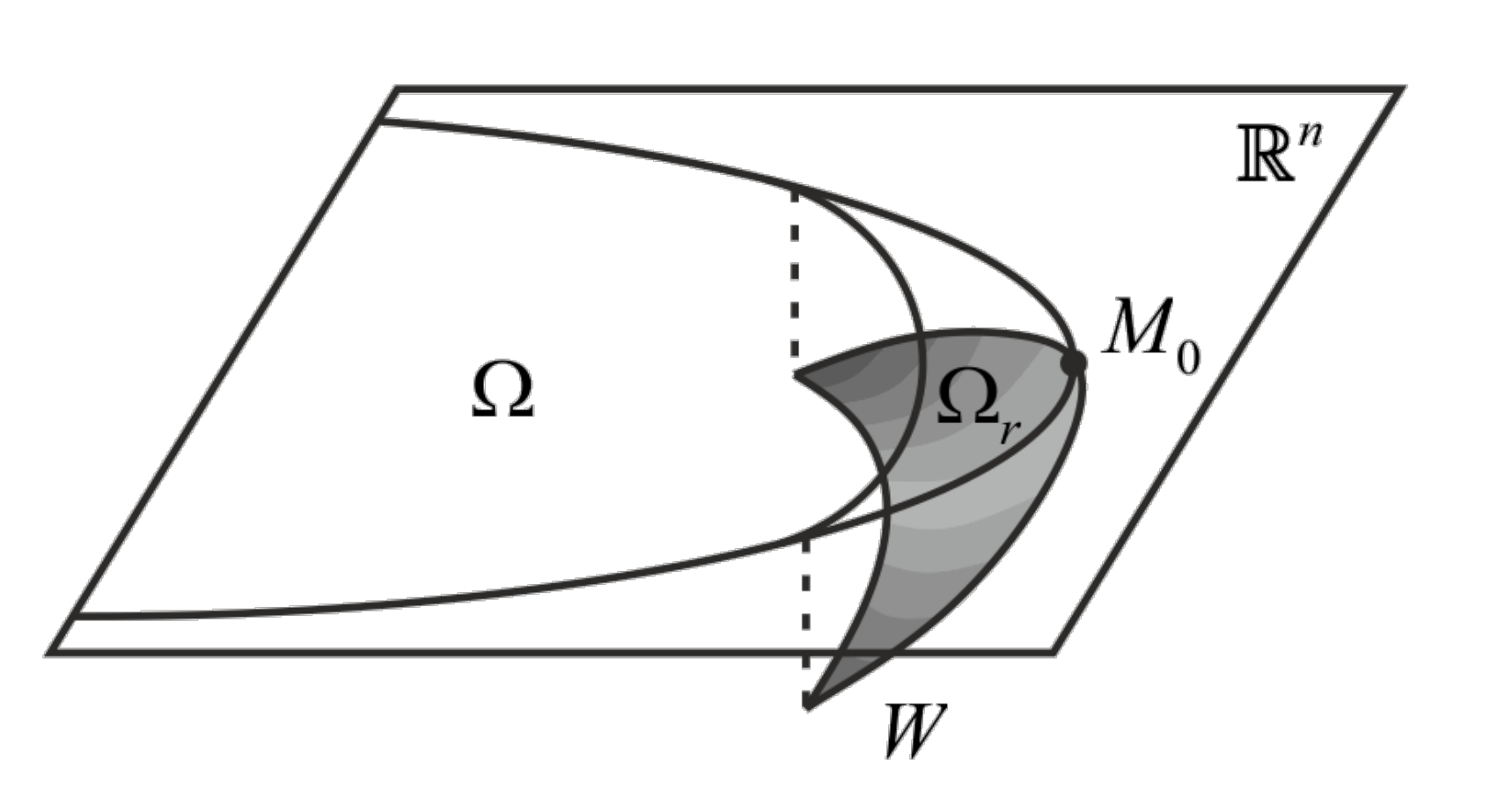} \\ Fig. 2}
\end{figure}

\begin{Th}
Let $\partial\Omega$ be a $C^2$-smooth hypersurface, $M_0\in\partial\Omega$. Assume there exists some function $W$ from Definition 6.6. Then $\partial\Omega$ is $(m-1)$-convex at $M_0$.
\end{Th}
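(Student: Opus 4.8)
The plan is to extract the $(m-1)$-convexity of $\partial\Omega$ at $M_0$ from the existence of the kernel $W$ by localizing everything at $M_0$, passing to a graph parametrization, and reading off a sign condition on a curvature-type matrix. First I would fix a convenient coordinate system: by orthogonal invariance of all the invariants involved (Remark after Theorem~4.2), appoint $M_0=\mathbf 0$, let $\mathbf e_{n}$ point along the interior normal $\mathbf n^+[\partial\Omega](M_0)$, and represent $\partial\Omega$ near $M_0$ as a graph $x^n=\omega(x')$, $x'=(x^1,\dots,x^{n-1})$, with $\omega(\mathbf 0)=0$, $\omega_{x'}(\mathbf 0)=\mathbf 0$; then by (\ref{kr0}) the curvature matrix of $\partial\Omega$ at $M_0$ is just $\omega_{x'x'}(\mathbf 0)$ (up to the chosen orthogonal freedom), so the goal is exactly $\omega_{x'x'}(\mathbf 0)\in K_{m-1}(n-1)$, i.e. $T_p(\omega_{x'x'}(\mathbf 0))>0$ for $p=0,1,\dots,m-1$.

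Next I would use $W$ itself as a test function. From (\ref{cb}) and (\ref{pW}) we know $W(\mathbf 0)=0$, $W\le 0$ on $\bar\Omega_r$, and $(W_x,\mathbf n^+)(M_0)\le 0$; combined with the fact that $W$ vanishes at an interior-touching point of $\partial\Omega_r\cap\partial\Omega$, the tangential gradient of $W$ along $\partial\Omega$ at $M_0$ must vanish, so $W_x(M_0)=c\,\mathbf n^+$ with $c\le 0$. Restricting $W$ to the boundary graph, the function $x'\mapsto W(x',\omega(x'))$ has a maximum at $x'=\mathbf 0$, which forces its Hessian there to be $\le 0$; expanding via the chain rule and using $\omega_{x'}(\mathbf 0)=\mathbf 0$ gives
\begin{equation}
W_{x'x'}(\mathbf 0)+c\,\omega_{x'x'}(\mathbf 0)\le 0,\qquad c=(W_x,\mathbf n^+)(M_0)\le 0.\label{keyineq}
\end{equation}
If $c=0$ one argues separately (the boundary data then force a degenerate situation contradicting $W\in\mathbb K_m$, or one perturbs); the generic case is $c<0$, so $\omega_{x'x'}(\mathbf 0)\ge -c^{-1}W_{x'x'}(\mathbf 0)$ in the sense of quadratic forms, i.e. $\omega_{x'x'}(\mathbf 0)$ dominates a scalar multiple of the tangential Hessian of $W$.

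The heart of the proof is then to show $W_{x'x'}(\mathbf 0)$, the purely tangential $(n-1)\times(n-1)$ block of $W_{xx}(\mathbf 0)$, is $(m-1)$-positive, and to propagate this through the inequality (\ref{keyineq}). For the first point I would invoke the Sylvester criterion for $m$-positive matrices, Lemma~5.4: since $W_{xx}(\mathbf 0)\in K_m(n)$, crossing out the row and column indexed by $n$ (the normal direction) yields $W_{xx}^{\langle n\rangle}(\mathbf 0)=W_{x'x'}(\mathbf 0)\in K_{m-1}(n-1)$. Then, because $K_{m-1}(n-1)$ is a convex cone and (by the monotonicity property (\ref{mon}), here in the $\bar K_{m-1}$ sense applied to $-c^{-1}W_{x'x'}(\mathbf 0)+(\text{something in }\bar K_{m-1})$) adding a matrix in $\bar K_{m-1}$ keeps us in the cone, the inequality $\omega_{x'x'}(\mathbf 0)\ge -c^{-1}W_{x'x'}(\mathbf 0)$ with $-c^{-1}W_{x'x'}(\mathbf 0)\in K_{m-1}$ gives $\omega_{x'x'}(\mathbf 0)\in K_{m-1}(n-1)$, which is precisely $(m-1)$-convexity of $\partial\Omega$ at $M_0$ by Definition~5.8 and (\ref{kr0}).

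The main obstacle I anticipate is the degenerate boundary case $c=(W_x,\mathbf n^+)(M_0)=0$, where (\ref{keyineq}) gives no information about $\omega_{x'x'}(\mathbf 0)$ at all; handling it presumably requires a sharper use of the $m$-admissibility of $W$ in the full $n$-dimensional domain $\Omega_r$ — examining the $(1,n),\dots,(n,n)$ entries of $W_{xx}(\mathbf 0)$ and the third-order behavior of $W$ near $M_0$ — or an approximation argument replacing $W$ by $W+\varepsilon(\text{distance to }\partial\Omega)$-type perturbations that restore $c<0$ while keeping $m$-admissibility for small $\varepsilon$. A secondary technical point is justifying that the tangential gradient of $W$ along $\partial\Omega$ vanishes at $M_0$ given only the one-sided information in (\ref{cb}); this uses that $M_0$ is an interior point of the relatively open piece $\partial\Omega_r\cap\partial\Omega=\partial\Omega\cap B_r(M_0)$ on which $W\le 0$ with $W(M_0)=0$, so $M_0$ is an interior maximum of $W$ restricted to that hypersurface piece.
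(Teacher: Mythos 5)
Your proposal follows the paper's own route step for step: graph coordinates at $M_0$ with $\mathbf e_n=\mathbf n^+[\partial\Omega](M_0)$, so that $\mathcal K[\partial\Omega](M_0)=\omega_{\tilde x\tilde x}(\mathbf 0)$; the second-order maximum condition for the restriction $\widetilde W(\tilde x)=W(\tilde x,\omega(\tilde x))$ at $\tilde x=\mathbf 0$, which by the chain rule and $\omega_{\tilde x}(\mathbf 0)=\mathbf 0$ gives $W_{\tilde x\tilde x}(\mathbf 0)+W_n(\mathbf 0)\,\omega_{\tilde x\tilde x}(\mathbf 0)\leqslant 0$; the Sylvester criterion (Lemma 5.4, item $(i)$) to conclude $W_{\tilde x\tilde x}(\mathbf 0)\in K_{m-1}(n-1)$ from $W_{xx}(\mathbf 0)\in K_m(n)$; and monotonicity (\ref{mon}) to pass the cone property to $\omega_{\tilde x\tilde x}(\mathbf 0)$. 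Your secondary worry about the tangential gradient is harmless: $M_0$ is an interior maximum of $W$ restricted to the relatively open piece $\partial\Omega\cap B_r(M_0)$ by (\ref{sd}), (\ref{cb}), and in fact only the second-order condition is used.

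The one genuine gap is exactly the point you flag: the case $c=(W_x,\mathbf n^+)(M_0)=0$, which you propose to treat by a perturbation of $W$ or by third-order analysis. Neither is needed, and your difficulty is created by dividing by $c$ too early. Order the same inequalities as the paper does: from the maximum condition, $-W_{\tilde x\tilde x}(\mathbf 0)-c\,\omega_{\tilde x\tilde x}(\mathbf 0)\in\bar K_n\subset\bar K_{m-1}$; adding $W_{\tilde x\tilde x}(\mathbf 0)\in K_{m-1}$ and applying (\ref{mon}) gives directly
$$-c\,\omega_{\tilde x\tilde x}(\mathbf 0)\in K_{m-1}(n-1).$$
For $m\geqslant 2$ the open cone $K_{m-1}$ does not contain the zero matrix (its elements have $T_{m-1}>0$), so this inclusion itself forces $c\neq 0$; together with $c\leqslant 0$ from (\ref{pW}) one gets $c<0$, and since $K_{m-1}$ is a cone, dividing by the positive number $-c$ yields $\omega_{\tilde x\tilde x}(\mathbf 0)\in K_{m-1}$, i.e. the claimed $(m-1)$-convexity. (For $m=1$ the statement is vacuous, every $C^2$-smooth hypersurface being $0$-convex.) So the degenerate case is excluded a posteriori by the very inclusion you are proving; no approximation argument is required, and with this rearrangement your proof closes and coincides with the paper's.
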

\begin{proof}
Let $\{\mathbf e_1,\mathbf e_2,\dots,\mathbf e_{n}\}$ be a cartesian basis with $\mathbf e_{n}=\mathbf n^+[\partial\Omega](M_0)$. There is a $C^2$-function $\omega=\omega(\tilde{x})$, $|\tilde{x}|\leqslant r$, such that  $\omega(\mathbf 0)=0$,  $\omega_{\tilde x}(\mathbf0)=\mathbf0$ and parametrization of $\partial\Omega\cap B_r(M_0)$ is given by
\begin{equation}X=\left(\matrix{\tilde{x}\cr\omega(\tilde x)\cr}\right),\quad \tilde{x}=\left(\matrix{x^1\cr x^2\cr\ldots \cr x^{n-1}\cr}\right),\quad X(M_0)=\mathbf 0.\label{1}\end{equation}
For $|\tilde{x}|\leqslant r$, Lemma 4.3 ensures the relation
\begin{equation}
\quad\mathcal{K}[\partial\Omega]=\frac{\tau^T\omega_{\tilde{x}\tilde{x}}\tau}{\sqrt{1+\omega_{\tilde{x}}^2}},\quad \tau=\tau[\partial\Omega],
\label{2}\end{equation}
and also $\mathcal K[\partial\Omega](M_0)=\omega_{\tilde{x}\tilde{x}}(\mathbf 0)$.

Let $W=W(x)$ be some $m$-Hessian kernel from Definition 6.6. Consider the following extension of $W\arrowvert_{\partial\Omega\cap B_r(M_0)}$ to $\bar\Omega_r$:
$$\widetilde{W}=W(\tilde x,\omega(\tilde x)).$$
Both of them attain maximum at $x=\mathbf0$. Therefore,
\begin{equation}\widetilde{W}_{\tilde x}=W_{\tilde x}+W_n\omega_{\tilde x},\quad-\widetilde{W}_{\tilde x\tilde x}(\mathbf0)=- W_{\tilde x\tilde x}(\mathbf0)-W_n(\mathbf0)\omega_{\tilde{x}\tilde{x}}(\mathbf 0)\in\bar K_n.\label{tlW}\end{equation}
On the other hand, Sylvester criterion (Lemma 5.3) guarantees that $W_{\tilde x\tilde x}\in K_{m-1}$ and due to monotonicity (\ref{mon}) relation (\ref{tlW}) guarantees the inclusion
$$-W_n(\mathbf0)\omega_{\tilde{x}\tilde{x}}(\mathbf 0)=-\widetilde{W}_{\tilde x\tilde x}(\mathbf0)+W_{\tilde x\tilde x}(\mathbf0)\in K_{m-1}.$$
In view of inequalities (\ref{pW}) the above line asserts, in particular, that $W_n(\mathbf0)<0$ and hence, $\mathcal K[\partial\Omega](M_0)=\omega_{\tilde{x}\tilde{x}}(\mathbf 0)\in K_{m-1}$, i.e., $\partial\Omega$ is $(m-1)$-convex at $M_0$.
\end{proof}
In order to construct an $m$-Hessian kernel we will keep to parametrization (\ref{1}) and consider a domain $\Omega^\beta_r\subset\Omega$ with $\partial\Omega^\beta_r=\Gamma_r^0\cup\Gamma_r^\beta$ (see Fig.3):
$$\Gamma_r^0=\{|\tilde x|\leqslant r, x^n=\omega(\tilde x)\}\subset\partial\Omega,$$
$$\Gamma_r^\beta=\left\{|\tilde x|\leqslant r, x^n=\omega^\beta(\tilde x)+\frac{\beta}{2}r^2\right\}\subset\bar\Omega,$$
where
\begin{equation}\label{hom}\omega^\beta=\omega^\beta(\tilde{x})=\omega(\tilde x)-\frac{\beta}{2}\tilde x^2,\quad \beta>0.\end{equation}
The choice of $r$ and the first restriction on parameter $\beta$ are presented in the following lemma.

\begin{lemma}
Let $\partial\Omega$ be $C^2$-smooth in some vicinity of $M_0\in\partial\Omega$. Assume that $\partial\Omega$ is $(m-1)$-convex at $M_0$ and $\mathbf k_{m-1}[\partial\Omega](M_0)\geqslant3\varepsilon>0$. Then there is $r=r(\varepsilon)$ such that $\Gamma_r^0$ is $(m-1)$-convex, $\mathbf k_{m-1}[\Gamma_r^0]\geqslant2\varepsilon$. Moreover, there exists $\beta_0=\beta_0(\varepsilon)$ such that for all $0<\beta\leqslant\beta_0$ the hypersurface $\Gamma_r^\beta$ is $(m-1)$-convex with respect to the normal directed into $\Omega\setminus\Omega_r^\beta$ and $\mathbf k_{m-1}[\Gamma_r^\beta]\geqslant\varepsilon$.
\end{lemma}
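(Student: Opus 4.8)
The plan is to handle the two assertions separately, using three ingredients: continuity of the $p$-curvatures of a $C^2$ hypersurface (Lemma 5.7 with $k=2$); the explicit graph formulas of Lemma 4.3; and the ``one point plus global $\mathbf k_m>0$'' reformulation of $m$-convexity (Lemma 5.12), which lets one avoid checking all the lower $p$-curvatures globally and instead verify a single local inclusion $\mathcal K\in K_{m-1}$ together with $\mathbf k_{m-1}>0$ everywhere. Note that $K_{m-1}$ is open, being cut out by the strict inequalities $T_p>0$, $p\le m-1$.

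\emph{The piece $\Gamma^0_r$.} Here $\Gamma^0_r=\partial\Omega\cap B_r(M_0)$, so $\mathcal K[\Gamma^0_r]=\mathcal K[\partial\Omega]$ and $\mathbf k_{m-1}[\Gamma^0_r]=\mathbf k_{m-1}[\partial\Omega]$ on it. Since $M\mapsto\mathbf k_{m-1}[\partial\Omega](M)$ is continuous and is at least $3\varepsilon$ at $M_0$, there is $r=r(\varepsilon)$ with $\mathbf k_{m-1}[\partial\Omega]\ge2\varepsilon$ throughout $\partial\Omega\cap B_r(M_0)$. Combining this with the hypothesis $\mathcal K[\partial\Omega](M_0)\in K_{m-1}$, Lemma 5.12 shows that $\Gamma^0_r$ is $(m-1)$-convex and $\mathbf k_{m-1}[\Gamma^0_r]\ge2\varepsilon$; by compactness of $\{|\tilde x|\le r\}$ the latter bound is in fact attained.

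\emph{The perturbed piece $\Gamma^\beta_r$.} First, adding the constant $\tfrac\beta2 r^2$ to the graph function is a translation of the ambient space, changing neither $g$, nor $\mathbf n^+$, nor $\mathcal K$; hence $\mathcal K[\Gamma^\beta_r]$ is the curvature matrix of the graph of $\omega^\beta=\omega-\tfrac\beta2\tilde x^2$, and
\[
\omega^\beta_{\tilde x}(\tilde x)=\omega_{\tilde x}(\tilde x)-\beta\tilde x,\qquad
\omega^\beta_{\tilde x\tilde x}(\tilde x)=\omega_{\tilde x\tilde x}(\tilde x)-\beta I .
\]
The normal ``into $\Omega\setminus\Omega^\beta_r$'' is the upward one, $(\mathbf n^+[\Gamma^\beta_r],\mathbf e_n)>0$, which is consistent at $M_0$ with the interior normal $\mathbf e_n=\mathbf n^+[\partial\Omega](M_0)$ of $\partial\Omega$; thus Lemma 4.3 gives $\mathcal K[\Gamma^\beta_r](\tilde x)$ as the ``$+$'' matrix, expressed through $\omega^\beta_{\tilde x}(\tilde x)$ and $\omega^\beta_{\tilde x\tilde x}(\tilde x)$ by a formula continuous in these arguments, and by the orthogonal invariance (\ref{ort}) of traces $\mathbf k_p[\Gamma^\beta_r](\tilde x)=T_p(\mathcal K[\Gamma^\beta_r](\tilde x))$ does not depend on the rotation $B$ in $\tau=\tau_0B$. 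Consequently each $\mathbf k_p[\Gamma^\beta_r](\tilde x)$ is jointly continuous in $(\tilde x,\beta)$ on the compact set $\{|\tilde x|\le r\}\times[0,1]$, with $\beta=0$ slice equal to $\mathbf k_p[\Gamma^0_r]$. At $\tilde x=\mathbf 0$ one has $\omega^\beta_{\tilde x}(\mathbf 0)=\mathbf 0$, so $\tau_0=I$ and $\mathcal K[\Gamma^\beta_r]=\mathcal K[\partial\Omega](M_0)-\beta I$, which lies in the open cone $K_{m-1}$ for all $\beta$ below some threshold. Finally, by uniform continuity one picks $\beta_0=\beta_0(\varepsilon)$ — not exceeding that threshold, and small enough that $\Gamma^\beta_r\subset\bar\Omega$, which holds since $\omega^\beta(\tilde x)+\tfrac\beta2 r^2\ge\omega(\tilde x)$ — such that $|\mathbf k_{m-1}[\Gamma^\beta_r]-\mathbf k_{m-1}[\Gamma^0_r]|<\varepsilon$ on $|\tilde x|\le r$ whenever $0<\beta\le\beta_0$. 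Then $\mathbf k_{m-1}[\Gamma^\beta_r]\ge2\varepsilon-\varepsilon=\varepsilon$, and Lemma 5.12 (applied with the point of $(m-1)$-convexity lying above $M_0$ and the global bound $\mathbf k_{m-1}[\Gamma^\beta_r]>0$) gives that $\Gamma^\beta_r$ is $(m-1)$-convex with respect to the stated normal.

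The routine parts are the shift-invariance of $\mathcal K$ and the explicit continuity of the right-hand sides in Lemma 4.3. The point requiring care is the orientation bookkeeping: one must check that the normal singled out in the statement is precisely the ``$+$'' normal of Lemma 4.3 and is consistent at $M_0$ with $\mathbf n^+[\partial\Omega](M_0)$, so that the relevant perturbation of the curvature matrix at $M_0$ is $\mathcal K[\partial\Omega](M_0)-\beta I$ (and not $+\beta I$) and the minus sign in $\omega^\beta=\omega-\tfrac\beta2\tilde x^2$ indeed improves, rather than destroys, $(m-1)$-convexity. Beyond this I expect no genuine obstacle.
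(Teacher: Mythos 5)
Your proof is correct and takes essentially the same route as the paper: $r(\varepsilon)$ from continuity of the curvatures of the $C^2$ boundary, and $\beta_0(\varepsilon)$ from the graph formulas of Lemma 4.3 applied to $\omega^\beta$, giving $\mathbf k_{m-1}[\Gamma^\beta_r]=\mathbf k_{m-1}[\Gamma^0_r]+O(\beta)$; your additional bookkeeping (orientation, openness of $K_{m-1}$, Lemma 5.12) just fills in what the paper leaves implicit. Only your closing remark is off: with the upward normal the perturbation is $\mathcal K[\partial\Omega](M_0)-\beta I$, which slightly \emph{lowers} the $p$-curvatures rather than improving convexity---this loss is exactly what the margin $3\varepsilon\to2\varepsilon\to\varepsilon$ and the openness of the cone absorb, as the body of your argument in fact handles correctly.
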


Indeed, the existence of $r(\varepsilon)$ is a straightforward consequence of $C^2$-continuity of $\partial\Omega$. As to $\beta_0(\varepsilon)$, we calculate $\mathcal K[\Gamma_r^\beta]$ by formulas (\ref{2}) with $\omega$ substituted on $\omega^\beta$ from (\ref{hom}) and arrive to $\mathbf k_{m-1}[\Gamma_r^\beta]=\mathbf k_{m-1}[\Gamma_r^0]+O(\beta)$. From now on the parameter $r$ is fixed.

\begin{remark}
The hypersurface $\partial\Omega^\beta_r=\Gamma_r^0\cup\Gamma_r^\beta$ from Lemma 6.8 is non-smooth. This is the reason we appoint normals by the rule from Remark 5.10 when speaking on $(m-1)$-convexity of $\partial\Omega_r$ smooth parts.
\end{remark}

Further on it is of use to imply another description of $\Omega_r^\beta$:
\begin{equation}\Omega_r^\beta=\left\{x\in\Omega:\:\:\frac{\beta}{2}\tilde x^2 <y(x)<\frac{\beta}{2}r^2,\:\: y=x^n-\omega^\beta(\tilde x)\right\}.\label{d0}\end{equation}
Notice that
\begin{equation}y_x=(-\omega^\beta_{\tilde x},1)\quad y_{ij}=-\omega^\beta_{ij}\quad i,j=1,\dots,n-1,\quad y_{ni}=y_{nn}=0.\label{yxx}\end{equation}
\begin{equation}\frac{1}{2}\beta r^2<\beta r^2-y(x)<\beta r^2, \quad x\in\Omega_r^\beta.\label{yb}\end{equation}
Consider in the domain (\ref{d0}) an auxiliary function
\begin{equation}W^\beta=\frac{y}{\beta r^2}\left(\frac{y}{2\beta r^2}-1\right),\quad0<\beta\leqslant\beta_0.\label{W}\end{equation}
and compute
\begin{equation}W^\beta_x=\frac{y-\beta r^2}{\beta^2 r^4}y_x,\quad W^\beta_{xx}=\frac{1}{\beta^2 r^4}\left((\beta r^2-y)(-y_{xx})+y_x\times y_x\right).\label{hW}\end{equation}
The identity (\ref{kos}) and the $p$-homogeneity of $T_p$ carry out the following presentation:
\begin{equation}T_p[W^\beta]=\frac{(\beta r^2-y)^{p-1}}{(\beta^2 r^4)^p}\left((\beta r^2-y)T_p[-y]+T_p^{ij}(-y_{xx})y_iy_j\right),\quad 1\leqslant p\leqslant n.\label{TW}\end{equation}
Everything is ready to make the final choice of parameter $\beta$.

\begin{figure}
	\center{\includegraphics[width=10cm]{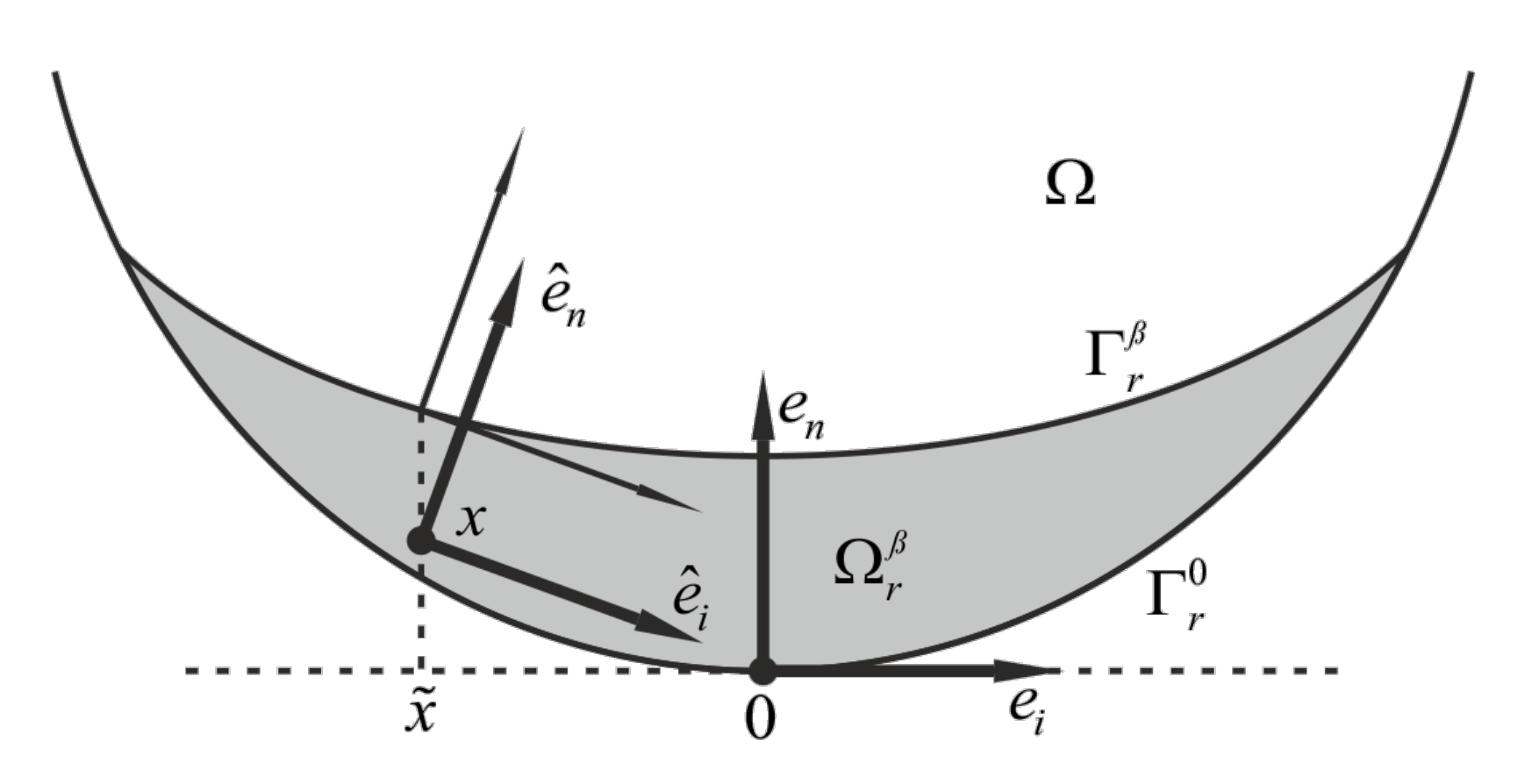} \\ Fig. 3}
\end{figure}

\begin{lemma}
Assume that the conditions of Lemma 6.8 are satisfied. Then there exists $\beta_1\leqslant\beta_0$ such that for all $0<\beta\leqslant\beta_1$ function (\ref{W}) is $m$-admissible in the domain $\bar\Omega_r^\beta$. Moreover,
\begin{equation}T_m[W^\beta](x)>\frac{\varepsilon}{2},\quad x\in\bar\Omega_r^\beta. \label{TmW}\end{equation}
\end{lemma}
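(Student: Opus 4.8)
The plan is to analyze the two terms in the trace formula \eqref{TW} with $p=m$ separately on $\bar\Omega_r^\beta$ and show that for $\beta$ small enough their sum stays bounded below by $\varepsilon/2$, after which $m$-admissibility of $W^\beta$ will follow from Lemma \ref{com1} (once we check positivity at one point). First I would record that, by \eqref{yxx}, the matrix $-y_{xx}$ is, up to the trivial last row and column, exactly $\omega^\beta_{\tilde x\tilde x}$, and therefore up to the factor $\sqrt{1+(\omega^\beta_{\tilde x})^2}$ and the conjugation by $\tau[\Gamma_r^\beta]$ in \eqref{2} it carries the same $p$-traces as the curvature matrix $\mathcal K[\Gamma_r^\beta]$. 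Hence Lemma 6.8 gives, for $0<\beta\leqslant\beta_0$ and for every $x\in\bar\Omega_r^\beta$ (the leaves $y=\mathrm{const}$ being translates of $\Gamma_r^\beta$),
\begin{equation}
T_{m-1}(-y_{xx})(x)\geqslant c_1\varepsilon>0,\qquad T_m(-y_{xx})\geqslant 0,\notag
\end{equation}
with $c_1>0$ depending only on the fixed geometry through the bounded factors $\sqrt{1+(\omega^\beta_{\tilde x})^2}$ and $\|\tau\|$. In particular $-y_{xx}\in\bar K_m(n)$ on $\bar\Omega_r^\beta$ and its $(m-1)$-trace is uniformly positive there.

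Next I would estimate the second term $T_m^{ij}(-y_{xx})y_iy_j$ in \eqref{TW}. The key observation is that, by \eqref{yxx}, the last component of $y_x$ equals $1$, so $y_x$ has a definite projection onto the direction that plays the role of the "extra" coordinate relative to $\Gamma_r^\beta$. Using the expansion \eqref{kos}, $T_m^{ij}(-y_{xx})y_iy_j=T_m(-y_{xx}+y_x\times y_x)-T_m(-y_{xx})$, and since $-y_{xx}\in\bar K_m$ and $y_x\times y_x\in\bar K_n\subset\bar K_m$, the monotonicity \eqref{mon} shows this quantity is $\geqslant 0$; to get it bounded \emph{below by a positive constant} I would instead bound it below by a multiple of $T_{m-1}$ of the restriction of $-y_{xx}$ to the hyperplane $y_x^\perp$, which is comparable to $T_{m-1}(-y_{xx})\geqslant c_1\varepsilon$ by the Sylvester-type inequality of Lemma 5.14(i) applied with $A$ an orthonormal frame of $y_x^\perp$ (this is exactly the mechanism already used in the proof of Theorem 6.7). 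Thus
\begin{equation}
(\beta r^2-y)\,T_m(-y_{xx})+T_m^{ij}(-y_{xx})y_iy_j\;\geqslant\; c_2\varepsilon\notag
\end{equation}
on $\bar\Omega_r^\beta$, uniformly in $\beta\leqslant\beta_0$, with $c_2>0$. Finally the prefactor in \eqref{TW} is controlled by \eqref{yb}: $(\beta r^2-y)^{m-1}/(\beta^2 r^4)^m$ is comparable, up to fixed constants, to $(\beta r^2)^{m-1}/(\beta^2 r^4)^m=\beta^{-m-1}r^{-2m-2}$, which \emph{grows} as $\beta\to0$. Consequently $T_m[W^\beta]\geqslant c_3\,\varepsilon\,\beta^{-m-1}r^{-2m-2}$ on $\bar\Omega_r^\beta$; choosing $\beta_1\leqslant\beta_0$ small enough makes the right-hand side exceed $\varepsilon/2$, which is \eqref{TmW}.

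It remains to upgrade "$T_m[W^\beta]>0$ on $\bar\Omega_r^\beta$" to genuine $m$-admissibility, i.e. $W^\beta_{xx}\in K_m$ throughout $\bar\Omega_r^\beta$. For this I invoke Lemma \ref{com1}: it suffices to exhibit one point $x_0\in\bar\Omega_r^\beta$ at which $W^\beta_{xx}(x_0)\in K_m$. I would take $x_0=M_0$ (the point $\tilde x=\mathbf 0$, $y=0$), where by \eqref{hW} $W^\beta_{xx}(M_0)=\frac{1}{\beta^2r^4}\big(\beta r^2(-y_{xx}(M_0))+y_x\times y_x(M_0)\big)$; since $-y_{xx}(M_0)=\omega^\beta_{\tilde x\tilde x}(\mathbf 0)=\omega_{\tilde x\tilde x}(\mathbf 0)+\beta I$ augmented by a zero row/column, its $(m-1)$-trace on the last-coordinate hyperplane is $\mathbf k_{m-1}[\partial\Omega](M_0)+O(\beta)>0$, and adding the rank-one term $y_x\times y_x\in\bar K_n$ in the complementary direction moves the matrix into $K_m$ by the Sylvester criterion (Lemma 5.14(i)) together with monotonicity \eqref{mon} — this is the same computation as in Theorem 6.7 read in the reverse direction. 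Combined with $T_m[W^\beta]>0$ everywhere, Lemma \ref{com1} yields $W^\beta\in\mathbb K_m(\bar\Omega_r^\beta)$, completing the proof.

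The main obstacle I anticipate is the second step: getting a \emph{strictly positive} lower bound for $T_m^{ij}(-y_{xx})y_iy_j$ rather than merely $\geqslant 0$. This requires using that $y_x$ is uniformly transversal to the leaves $\{y=\mathrm{const}\}$ (which holds because $y_x=(-\omega^\beta_{\tilde x},1)$ has last component $1$ and $|\omega^\beta_{\tilde x}|$ is bounded on $B_r$), and then relating the skew-symmetric term to $T_{m-1}$ of the curvature matrix of $\Gamma_r^\beta$ via Lemma 5.14(i) — the bookkeeping of the bounded geometric factors $\sqrt{1+(\omega^\beta_{\tilde x})^2}$ and $\tau[\Gamma_r^\beta]$ along the way is the only genuinely fiddly part.
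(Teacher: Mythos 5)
Your overall architecture --- establish only $T_m[W^\beta]>\varepsilon/2$ throughout $\bar\Omega_r^\beta$, verify $W^\beta_{xx}\in K_m$ at the single point $M_0$ (where $\omega^\beta_{\tilde x}(\mathbf 0)=\mathbf 0$, so the matrix $\tau$ does not interfere), and conclude by Lemma \ref{com1} --- is a legitimate alternative to the paper, which instead checks $T_p[W^\beta]>0$ for every $p=1,\dots,m$ directly. The gap is in how you feed the geometry into (\ref{TW}). You transfer the $(m-1)$-convexity of $\Gamma_r^\beta$ to the matrix $-y_{xx}$ written in the \emph{original} coordinates, claiming $T_{m-1}(-y_{xx})\geqslant c_1\varepsilon$ and $-y_{xx}\in\bar K_m$ ``up to the factor $\sqrt{1+(\omega^\beta_{\tilde x})^2}$ and the conjugation by $\tau$''. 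That transfer is not valid: by (\ref{2}) the curvature matrix is the congruence $\tau^T\omega^\beta_{\tilde x\tilde x}\tau$ (times a positive scalar) with the \emph{non-orthogonal} matrix $\tau$, and congruence by a non-orthogonal matrix preserves neither $p$-traces up to bounded factors nor membership in $K_p$ for $p<n$: already $S={\rm diag}(5,-1)$, $C={\rm diag}(\epsilon,1)$ gives $T_1(S)>0$ but $T_1(C^TSC)<0$; geometrically, ${\rm tr}\,\mathcal K>0$ is compatible with ${\rm tr}\,\omega_{\tilde x\tilde x}<0$ when $|\omega_{\tilde x}|$ is not small, and nothing in Lemma 6.8 forces $|\omega^\beta_{\tilde x}|$ to be small. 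In particular the inequality $T_m(-y_{xx})\geqslant 0$, which you use to discard the first term of (\ref{TW}) and to get a bracket bound uniform in $\beta$, is unsupported: only $\mathcal K[\Gamma_r^\beta]\in K_{m-1}$ is known, and that carries no sign information about $T_m$ of the tangential Hessian.

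The repair is exactly the step your ``restriction of $-y_{xx}$ to $y_x^\perp$'' remark gestures at but then bypasses: by orthogonal invariance (\ref{ort}) evaluate (\ref{TW}) at each point in the orthonormal moving frame of $\Gamma_r^\beta$, in which the gradient of $y$ is parallel to the $n$-th frame vector, so the quadratic term reduces to $(1+(\omega^\beta_{\tilde x})^2)\,T_m^{nn}(-y_{\hat x\hat x})=(1+(\omega^\beta_{\tilde x})^2)\,T_{m-1}(-y^{\langle n\rangle}_{\hat x\hat x})$, and the compressed matrix is identified \emph{exactly}, not ``comparably'', as $\sqrt{1+(\omega^\beta_{\tilde x})^2}\,\mathcal K[\Gamma_r^\beta]$ --- here the congruence by $\tau$ is not something to be estimated, it is precisely formula (\ref{2}); this is (\ref{Wm})--(\ref{Wmm}) in the paper, and Lemma 6.8 then yields the clean lower bound $\mathbf k_{m-1}[\Gamma_r^\beta]\geqslant\varepsilon$. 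The remaining term $(\beta r^2-y)T_m[-y]$ should not be claimed nonnegative; bound it by $\beta r^2|T_m[-y]|\leqslant C\bigl(\|\Gamma_r^0\|_{C^2}\bigr)\beta$ and absorb it through the choice of $\beta_1$, as the paper does. With this correction your closing step works (modulo the harmless sign slip: $\omega^\beta_{\tilde x\tilde x}=\omega_{\tilde x\tilde x}-\beta I$, not $+\beta I$): the Sylvester criterion at $M_0$ together with $T_m[W^\beta]>0$ everywhere and Lemma \ref{com1} gives $m$-admissibility, a slightly more economical finish than the paper's direct verification of all traces $T_p[W^\beta]>0$, $p=1,\dots,m$, which uses (\ref{macl}).
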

\begin{proof} Our aim to find $\beta$ such that values (\ref{TW}) are positive in $\bar\Omega_r^\beta$ for $p=1,\dots,m$.
Due to orthogonal invariance (\ref{ort}) we are free to make use of the most convenient for computing cartesian basis in (\ref{TW}). In this course, fix up some point $M\in\bar\Omega_r^\beta$ with coordinates $x=(\tilde x,x^n)$ and relate to it the basis
$$\hat \mathbf e_i(M)=X_{(i)}[\Gamma^\beta_r](\tilde x),\quad i=1,\dots,n-1,\quad \hat\mathbf e_n(M)=\mathbf n^+[\Gamma^\beta_r](\tilde x).$$
This basis is the moving frame (\ref{j}) of hypersurface $\Gamma^\beta_r$ at $\tilde x$ (see Fig.3).
In view of parametrization (\ref{1}) with $\omega^\beta$ instead of $\omega$, this basis reads as
\begin{equation}\hat\mathbf e_i=(\tau^1_i,\dots,\tau^{n-1}_i,\omega^\beta_{(i)})(\tilde x),\quad\hat \mathbf e_n=\frac{(-\omega^\beta_{1},\dots,-\omega^\beta_{n-1},1)}{\sqrt{1+(\omega^\beta_{\tilde x})^2}}(\tilde x). \label{cb1}\end{equation}
Here $\tau=\tau[\Gamma^\beta_r](\tilde x)$ and we use the notations $\omega^\beta_{(i)}=\omega^\beta_k\tau^k_i$ similar to (\ref{j}).

Due to (\ref{d0}) the hypersurface $\Gamma^\beta_r$ is a level surface of the function $y$. So we have at the point $M$:
$$\frac{\partial y}{\partial\hat x^i}(M)=0, \quad i=1,\dots,n-1,\quad\frac{\partial y}{\partial \hat x^n}(M)=(y_x,\hat\mathbf e_n)=\sqrt{1+(\omega^\beta_{\tilde x})^2}.$$
Then relation (\ref{TW}) in the basis (\ref{cb1}) reads as
\begin{equation}T_p[W^\beta](M)=\frac{(\beta r^2-y)^{p-1}}{(\beta^2 r^4)^p}\left((\beta r^2-y)T_p[-y](M)+(1+(\omega^\beta_{\tilde x})^2)T^{nn}_p(-y_{\hat{x}\hat{x}})(M)\right)\label{Wcm}\end{equation}
It is easy to check that
\begin{equation}T^{nn}_p(-y_{\hat{x}\hat{x}})=T_{p-1}(-y^{\langle n\rangle}_{\hat{x}\hat{x}}),\label{Wm}\end{equation}
where $-y^{\langle n\rangle}_{\hat{x}\hat{x}}$ is the matrix derived from $-y_{\hat{x}\hat{x}}$ by crossing out the row and the column numbered by $n$. Calculate the matrix $-y^{\langle n\rangle}_{\hat{x}\hat{x}}$ via (\ref{yxx}), (\ref{cb1}):
$$-\frac{\partial^2y}{\partial \hat x^i\partial\hat x^j}=-\left(y_{xx}\hat\mathbf e_i,\hat\mathbf e_j\right)=
\omega^\beta_{kl}\tau^k_i\tau^l_j,\quad i,j=1,\dots,n-1.$$
By formula (\ref{2}) with $\omega^\beta$ instead of $\omega$ we get
\begin{equation}-y^{\langle n\rangle}_{\hat{x}\hat{x}}=\sqrt{1+(\omega^\beta_{\tilde x})^2}\:\mathcal K[\Gamma_r^\beta],\quad T_{p-1}(-y^{\langle n\rangle}_{\hat{x}\hat{x}})=
\left(1+(\omega^\beta_{\tilde x})^2\right)^{\frac{p+1}{2}}\mathbf k_{p-1}[\Gamma_r^\beta].\label{Wmm}\end{equation}

It follows from (\ref{Wcm}), (\ref{Wm}), (\ref{Wmm}) and (\ref{yb}) that
$$T_p[W^\beta](M)\geqslant\frac{(\beta r^2-y)^{p-1}}{(\beta^2 r^4)^p}\left(-\beta r^2|T_p[-y](M)|+\mathbf k_{p-1}[\Gamma_r^\beta](M)\right).$$
Due to (\ref{macl}) and Lemma 6.8 we have
$$\mathbf k_{p-1}[\Gamma_r^\beta]\geqslant\left(\mathbf k_{m-1}[\Gamma_r^\beta]\right)^{\frac{p-1}{m-1}}\geqslant\varepsilon,\quad 1\leqslant p\leqslant m.$$
Notice that $|T_p[-y](M)|$ depends on $||\Gamma_r^0||_{C^2}$ and is bounded with respect to $\beta$.
The latter guarantees the existence of $\beta_1=\beta_1(||\Gamma_r^0||_{C^2},\varepsilon)$ ensuring the inequalities
$$T_p[W^\beta](M)>\frac{\varepsilon}{2},\quad p=1,\dots,m,$$
for all $0<\beta\leqslant\beta_1$. But $M\in\bar\Omega_r^{\beta}$ has been an arbitrary point, which means that $W^{\beta}$ is an $m$-admissible function in $\bar\Omega_r^{\beta}$, $0<\beta\leqslant\beta_1$, and inequality (\ref{TmW}) holds true.
\end{proof}
Lemma 6.10 implies a supplement to Theorem 6.7.
\begin{Th}
Let $\partial\Omega$ be $C^2$-smooth in some vicinity of $M_0\in\partial\Omega$ and $(m-1)$-convex at $M_0$. Then an $m$-Hessian kernel of local sub-barriers at $M_0\in\partial\Omega$ does exist.
\end{Th}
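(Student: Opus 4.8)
The plan is to take as the sub-domain of Definition 6.6 the set $\Omega_r^\beta$ constructed above and as the kernel a fixed positive multiple of the auxiliary function $W^\beta$ from (\ref{W}); the analytic core of the matter --- that $W^\beta$ is $m$-admissible --- has already been isolated in Lemma 6.10, so what remains is the normalization of constants and the verification of the boundary inequalities (\ref{cb}).

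First I would fix the data. Since $\partial\Omega$ is $(m-1)$-convex at $M_0$ we have $\mathbf k_{m-1}[\partial\Omega](M_0)>0$; set $\varepsilon:=\frac13\mathbf k_{m-1}[\partial\Omega](M_0)$, so the hypothesis $\mathbf k_{m-1}[\partial\Omega](M_0)\geqslant3\varepsilon>0$ of Lemma 6.8 holds. Lemma 6.8 then provides $r=r(\varepsilon)$ and $\beta_0=\beta_0(\varepsilon)$, and Lemma 6.10 a further $\beta_1\leqslant\beta_0$, such that with $\beta:=\beta_1$ the function $W^{\beta_1}$ from (\ref{W}) is $m$-admissible in $\bar\Omega_r^{\beta_1}$ and $T_m[W^{\beta_1}]>\varepsilon/2$ there. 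Put
$$\Omega_r:=\Omega_r^{\beta_1},\qquad W:=\frac{8}{3}\,W^{\beta_1}.$$
Because $K_m$ is a cone, multiplication by the positive constant $8/3$ preserves the inclusion $W_{xx}\in K_m$, hence $W\in\mathbb K_m(\bar\Omega_r)$ (indeed $T_p[W]=(8/3)^pT_p[W^{\beta_1}]>0$ for $0\leqslant p\leqslant m$); and $\Omega_r^{\beta_1}$ is a domain of type (\ref{sd}), with $\partial\Omega_r=\Gamma_r^0\cup\Gamma_r^{\beta_1}$, $\Gamma_r^0\subset\partial\Omega$, and the relative interior of $\Gamma_r^{\beta_1}$ contained in $\Omega$.

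Second I would verify (\ref{cb}) through the substitution $s:=y/(\beta_1 r^2)$, in terms of which $W=\frac{8}{3}\left(\frac{s^2}{2}-s\right)$. On $\bar\Omega_r^{\beta_1}$ one has $s\in[0,\frac12]$ by (\ref{yb}), and on that interval the map $s\mapsto\frac{s^2}{2}-s$ decreases strictly from $0$ to $-\frac38$; hence $W\leqslant0$ on $\bar\Omega_r^{\beta_1}$. At $M_0$ we have $\tilde x=\mathbf 0$ and $y=0$, so $s=0$ and $W(M_0)=0$. On $\Gamma_r^0$, where $y=\frac{\beta_1}{2}\tilde x^2$ with $|\tilde x|\leqslant r$, we get $s\in[0,\frac12]$ and therefore $W\leqslant0$. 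On $\Gamma_r^{\beta_1}$, where $y\equiv\frac{\beta_1}{2}r^2$, we get $s=\frac12$ and $W\equiv\frac83\cdot\left(-\frac38\right)=-1$. Consequently $W\arrowvert_{\partial\Omega_r}\leqslant0$ and $W\arrowvert_{\partial\Omega_r\cap\Omega}=W\arrowvert_{\Gamma_r^{\beta_1}}=-1\leqslant-1$, so all three conditions of (\ref{cb}) are met and $W$ is an $m$-Hessian kernel of local sub-barriers at $M_0$.

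The only genuine difficulty is already past: it is the content of Lemma 6.10 --- that the $(m-1)$-convexity of $\partial\Omega$ propagates, via the identity (\ref{kos}), the Maclaurin inequalities (\ref{macl}), the orthogonal invariance (\ref{ort}) of $p$-traces, and the relation (\ref{Wmm}) between $T^{nn}_p(-y_{\hat x\hat x})$ and $\mathbf k_{p-1}[\Gamma_r^\beta]$ --- into $m$-admissibility of $W^\beta$ in a full $n$-dimensional collar of $\partial\Omega$. What is added here --- fixing $\varepsilon$, invoking Lemmas 6.8 and 6.10, rescaling by $8/3$, and reading (\ref{cb}) off the monotonicity of a single quadratic in $s$ --- requires no new idea and essentially no computation.
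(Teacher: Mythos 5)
Your proposal is correct and follows essentially the same route as the paper: invoke Lemmas 6.8 and 6.10 to get $r$, $\beta_1$ and the $m$-admissibility of $W^{\beta_1}$, then take $\Omega_r=\Omega_r^{\beta_1}$ and $W=\tfrac{8}{3}W^{\beta_1}$ and check (\ref{cb}) from (\ref{yb}). Your explicit verification of the boundary inequalities via the quadratic in $s=y/(\beta_1 r^2)$ is just a spelled-out version of the paper's one-line appeal to (\ref{yb}), and the homogeneity remark justifying that the factor $8/3$ preserves $m$-admissibility is accurate.
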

\begin{proof}
Indeed, Lemma 6.10 confirms that function (\ref{W}) is $m$-admissible in the closure of domain (\ref{d0}) for all $0<\beta\leqslant\beta_1$. On the other hand, due to relations (\ref{yb}) a function $W=8/3W^\beta$ satisfies inequalities (\ref{cb}). Hence, the domain $\Omega^{\beta_1}_r$ and function
$$W=\frac{8}{3} W^{\beta_1}(x),\quad x\in\Omega^{\beta_1}_r,$$
with $r$, $\beta_1$ from Lemmas 6.8, 6.10, match Definition 6.6.
\end{proof}

\subsection{A sample of a priori estimate}
Now we demonstrate cooperation of Lemma 6.5 with Definition 6.6.

\begin{lemma}
Let $M_0\in\partial\Omega$, $\partial\Omega\cap B_{r_0}(M_0)$ be $C^2$-smooth and $(m-1)$-convex at $M_0$, $\mathbf k_{m-1}[\partial\Omega](M_0)\geqslant\varepsilon>0$.
Let $v\in C^2(\Omega\cap B_{r_0}(M_0))$ and denote the restriction of $v$ to $\partial\Omega$ by $\varphi$:
$$v(x)=\varphi(x),\quad x\in\partial\Omega\cap B_{r_0}(M_0).$$
Assume that there exist $\mu>0$, $u\in\mathbb K_m(\Omega\cap B_{r_0}(M_0))$ such that
\begin{equation}L[v;u]=F^{ij}_m[u]v_{ij}\leqslant\mu,\quad x\in\Omega\cap B_{r_0}(M_0).\label{mu}\end{equation}
Then
\begin{equation}-v_{\mathbf n}(M_0)\leqslant c\left(\frac{1}{\varepsilon},\:\mu,\:\|v\|_{C(\Omega\cap B_{r_0}(M_0))}, \:\|\partial\Omega\cap B_{r_0}(M_0)\|_{C^2},\:\|\varphi\|_{C^2(\partial\Omega\cap B_{r_0}(M_0))}\right),\label{vn}\end{equation}
where  $\mathbf n^+[\partial\Omega]$ is the interior to $\partial\Omega$ normal, $v_{\mathbf n}=(u_x,\mathbf n^+[\partial\Omega])$ is the normal derivative.
\end{lemma}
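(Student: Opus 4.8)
The plan is to build an explicit $m$-admissible sub-barrier for $v$ near $M_0$ out of the $m$-Hessian kernel supplied by Theorem 6.11, to compare $v$ with it via the maximum principle of Lemma 6.5, and then to differentiate the resulting inequality in the inward normal direction at $M_0$.

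\emph{Step 1: local setup and the kernel.} I would fix cartesian coordinates with origin $M_0$ and $\mathbf n^+[\partial\Omega](M_0)=\mathbf e_n$, so that $\partial\Omega\cap B_{r_0}(M_0)$ is the graph $x^n=\omega(\tilde x)$ with $\omega(\mathbf 0)=0$, $\omega_{\tilde x}(\mathbf 0)=\mathbf 0$, and $\|\omega\|_{C^2}$ bounded in terms of $\|\partial\Omega\cap B_{r_0}(M_0)\|_{C^2}$. Since $\partial\Omega$ is $(m-1)$-convex at $M_0$ with $\mathbf k_{m-1}[\partial\Omega](M_0)\geqslant\varepsilon$, Lemmas 6.8, 6.10 and Theorem 6.11 furnish a subdomain $\Omega_r$ as in (\ref{sd}) with $r=r(\varepsilon)\leqslant r_0$, and an $m$-Hessian kernel $W\in\mathbb K_m(\bar\Omega_r)\cap C^2(\bar\Omega_r)$ satisfying (\ref{cb}) and (\ref{pW}). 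From the construction I would also record the quantitative facts $W(M_0)=0$, $T_p[W]\geqslant c_0>0$ on $\bar\Omega_r$ for $p=1,\dots,m$ with $c_0=c_0(\varepsilon)$, and $\gamma:=|(W_x,\mathbf n^+)(M_0)|<\infty$.

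\emph{Step 2: the barrier and the comparison.} Let $\varphi^\ast(\tilde x,x^n):=\varphi(\tilde x,\omega(\tilde x))$ be the extension of $\varphi$ that is constant along $\mathbf e_n$; then $\varphi^\ast\in C^2(\bar\Omega_r)$, $\varphi^\ast=v$ on $\partial\Omega\cap B_r(M_0)$, and $\|\varphi^\ast\|_{C^2(\bar\Omega_r)}\leqslant C_1$ with $C_1=C_1(\|\partial\Omega\|_{C^2},\|\varphi\|_{C^2})$. Set $w:=\varphi^\ast+AW$ with a constant $A>0$ to be chosen, and verify the three hypotheses of Lemma 6.5 on $\Omega_r$. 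First, $w$ is $m$-admissible with $F_m[w]\geqslant\mu$: since $w_{xx}=AW_{xx}+\varphi^\ast_{xx}$ with $W_{xx}\in K_m$, each $T_p(w_{xx})=A^pT_p[W]+O(A^{p-1})$, $p=1,\dots,m$; the leading coefficients being bounded below by $c_0>0$ and $\|\varphi^\ast_{xx}\|$ being bounded by $C_1$, all $T_p(w_{xx})>\frac12A^pc_0>0$ once $A$ is large, whence $w_{xx}\in K_m$ (openness of $K_m$) and $F_m[w]\geqslant(\frac12A^mc_0)^{1/m}\geqslant\mu$ for $A$ large. Second, $w\leqslant v$ on $\partial\Omega_r$: on $\partial\Omega_r\cap\partial\Omega$ we have $W\leqslant0$ by (\ref{cb}), so $w\leqslant\varphi^\ast=v$; on $\partial\Omega_r\cap\Omega$ we have $W\leqslant-1$, so $w\leqslant C_1-A\leqslant-\|v\|_C\leqslant v$ provided $A\geqslant C_1+\|v\|_C$. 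Third, $L[v;u]\leqslant\mu$ holds on $\Omega_r\subset\Omega\cap B_{r_0}(M_0)$ and $u\in\mathbb K_m(\Omega_r)$, by hypothesis. Fixing $A=A(1/\varepsilon,\mu,\|v\|_C,\|\partial\Omega\|_{C^2},\|\varphi\|_{C^2})$ large enough for all three requirements, Lemma 6.5 gives $v-w\geqslant\min_{\partial\Omega_r}(v-w)\geqslant0$ on $\bar\Omega_r$.

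\emph{Step 3: reading off the estimate, and the main difficulty.} Since $v-w\geqslant0$ on $\bar\Omega_r$ and $(v-w)(M_0)=\varphi(M_0)-\varphi^\ast(M_0)-AW(M_0)=0$, while by the construction of $\Omega_r$ in Theorem 6.11 the inward ray $M_0+t\mathbf n^+[\partial\Omega](M_0)$ (in the chosen coordinates, $\tilde x=\mathbf 0$, $x^n=t$) lies in $\Omega_r$ for all small $t>0$, the nonnegative function $t\mapsto(v-w)(M_0+t\mathbf n^+)$ vanishes at $t=0$ and hence has nonnegative right derivative there, i.e. $(v-w)_{\mathbf n}(M_0)\geqslant0$. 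Therefore
$$v_{\mathbf n}(M_0)\geqslant w_{\mathbf n}(M_0)=(\varphi^\ast_x,\mathbf n^+)(M_0)+A(W_x,\mathbf n^+)(M_0)\geqslant-\|\varphi^\ast_x\|_C-A\gamma=:-c,$$
and tracing the dependencies ($c$ depends on $\|\varphi\|_{C^2}$, on $A$, and through $W$ on $r$, $\gamma$, $c_0$, hence on $\varepsilon$ and $\|\partial\Omega\|_{C^2}$) gives exactly (\ref{vn}). Because the kernel $W$ is already delivered by Theorem 6.11, no deep obstacle remains; the delicate points are only (i) ordering the choices of $r$, then $\beta$, then $A$ so that the final $A$ depends solely on the data listed in (\ref{vn}), and (ii) checking that adding the \emph{not necessarily sign-definite} matrix $\varphi^\ast_{xx}$ to the $m$-positive matrix $AW_{xx}$ keeps the sum in $K_m$ with $F_m$ still bounded below by $\mu$ — which is exactly what enlarging $A$, together with the openness of $K_m$ and the $p$-homogeneity of the $T_p$, provides.
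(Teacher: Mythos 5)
Your proposal is correct and follows essentially the same route as the paper: it uses the kernel $W$ from Theorem 6.11 on the subdomain $\Omega_r$, the extension $\Phi(x)=\varphi(\tilde x,\omega(\tilde x))$ of the boundary data, the barrier $\Phi+\gamma W$ made $m$-admissible with $F_m\geqslant\mu$ by taking the multiplier large (via the $p$-homogeneity expansion of $T_p$), the comparison Lemma 6.5 on $\Omega_r$, and the normal derivative comparison at $M_0$ where $v-w$ attains its zero minimum. The only differences are cosmetic (your $A$ is the paper's $\bar\gamma=\max\{\gamma_1,\gamma_2\}$, and your extra term $(\varphi^\ast_x,\mathbf n^+)(M_0)$ in fact vanishes since $\Phi$ is constant in $x^n$).
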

\begin{proof}
Let the parametrization of $\partial\Omega\cap B_{r_0}(M_0)$ be given by (\ref{1}).

Let $W$ be the $m$-Hessian kernel constructed in Subsection 6.2 and finally fixed in Theorem 6.11. Let $\Omega_r=\Omega^{\beta_1}_r\subset\Omega\cap B_{r_0}(M_0)$, $r\leqslant r_0$, be the corresponding sub-domain (\ref{d0}) for $W$, see Theorem 6.11.

Extend the function $\varphi$ from $\partial\Omega\cap B_{r_0}(M_0)$ to $\Omega_r$ by equality $\Phi(x)=\varphi(\tilde x,\omega(\tilde x))$, $x=(\tilde x, x^n)\in\Omega_r$.

Consider the following $\gamma$-set of functions:
\begin{equation}w_{\gamma}=\Phi(x)+\gamma W(x),\quad x\in\bar\Omega_r,\quad \gamma\geqslant1. \label{Aw}\end{equation}

Inequality (\ref{mu}) coincides with the first inequality (\ref{cps1}) from Lemma 6.5.

\begin{enumerate}
\item Firstly prove that Lemma 6.5 holds for $\Omega=\Omega_r$, $w=w_\gamma$ with sufficiently large $\gamma$. Write out the $p$-traces of function (\ref{Aw}) in the form
$$T_p[w_\gamma]=\gamma^p\:T_p\left(\frac{1}{\gamma}\Phi_{xx}+W_{xx}\right),\quad p=1,\dots,m.$$
It follows from Lemmas 6.8, 6.10 that there is
$$\gamma_1=\gamma_1\left(\frac{1}{\varepsilon},\:\mu,\:\|\partial\Omega\cap B_{r_0}(M_0)\|_{C^2},\|\varphi\|_{C^2(\partial\Omega\cap B_{r_0}(M_0))}\right)\gg1$$
such that $w_\gamma\in\mathbb K_m(\Omega_r)$ and $F_m[w]\geqslant\mu$ in $\Omega_r$ for all $\gamma\geqslant \gamma_1$. Then all requirements of Lemma 6.5 for functions $v$, $w=w_{\gamma}$ with $\gamma\geqslant\gamma_1$ are satisfied and the inequality (\ref{v-w}) holds.

\item Prove now that
\begin{equation}(w_\gamma-v)\arrowvert_{\partial\Omega_r}\leqslant 0.\label{bw}\end{equation}
By construction
$$(w_\gamma-v)\arrowvert_{\partial\Omega\cap\partial\Omega_r}=\gamma W\arrowvert_{\partial\Omega\cap\partial\Omega_r}\leqslant0.$$
Let $\gamma_2=\sup_{\Omega_r}(\Phi-v)$. Then for all $\gamma\geqslant \gamma_2$ the inequality
$$(w_\gamma-v)\arrowvert_{\Omega\cap\partial\Omega_r}=(\Phi-v)\arrowvert_{\Omega\cap\partial\Omega_r}-\gamma W\arrowvert_{\Omega\cap\partial\Omega_r}\leqslant \gamma_2-\gamma\leqslant0$$
holds true. Hence, relation (\ref{bw}) got valid with $\gamma\geqslant\gamma_2$.
\end{enumerate}

Let $\bar\gamma=\max\{\gamma_1,\gamma_2\}$. In presence of (\ref{v-w}) relation (\ref{bw}) brings out the estimate
$$w_{\bar\gamma}(x)\leqslant v(x),\quad x\in\bar\Omega_r.$$
In view of the equality $w_{\bar\gamma}(M_0)=\varphi(M_0)$ and (\ref{yxx}), (\ref{yb}), (\ref{hW}) there is the estimate
$$-v_{\mathbf n}(M_0)\leqslant \bar\gamma|W_n(M_0)|=\bar\gamma c\left(\frac{1}{\varepsilon}\right).$$
This guarantees the validity of (\ref{vn}).
\end{proof}

In order to apply Lemma 6.5 in further proceeding we reformulate the problem (\ref{hes}) for $u\in\mathbb K_m(\Omega)$ as
\begin{equation}F_m[u]=f>0,\quad u\arrowvert_{\partial\Omega}=\varphi.\label{Fm}\end{equation}
It is known that a priori estimate of $|u|$ in (\ref{Fm}) does not depend on geometric properties of $\partial\Omega$ but estimates of $|u_x|$ and $|u_{xx}|$ do depend. Here we demonstrate the estimation of $|u_x|$.

\begin{Th}
Let $\Omega$ be a bounded domain in $\mathbb R^n$. Assume that
\begin{equation}f\geqslant\nu>0,\quad x\in\Omega,\quad\mathbf k_{m-1}[\partial\Omega]\geqslant\varepsilon>0.\label{C1}\end{equation}
Then the inequality
\begin{equation}|u_x|\leqslant c\left(\frac{1}{\varepsilon},\:\frac{1}{\nu},\:\|u\|_{C(\Omega)},\:\|f\|_{C^1(\Omega)},\:\|\partial\Omega\|_{C^2},
\|\varphi\|_{C^2(\partial\Omega)}\right)\label{esC1}\end{equation}
holds valid in $\bar\Omega$ for all $m$-admissible solutions to the problem (\ref{Fm}).
\end{Th}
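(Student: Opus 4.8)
The plan is to derive the gradient bound in two stages: an interior-type estimate for $\max_\Omega |u_x|$ in terms of its maximum over $\partial\Omega$, and a boundary estimate for $|u_x|$ on $\partial\Omega$ via the local sub-barriers of Subsection 6.2. For the interior stage I would differentiate equation (\ref{Fm}) in a direction $\xi$, obtaining $L[u_\xi;u]=f_\xi$ with $L$ the linearized operator from (\ref{LF}); since $L$ is elliptic on $u\in\mathbb K_m(\bar\Omega)$ by (\ref{ellip}), the classical maximum principle (or rather a Bernstein-type argument applied to $|u_x|^2$, controlled by $\|f\|_{C^1}$, $1/\nu$, and $\|u\|_{C(\Omega)}$) shows that $|u_x|$ attains its maximum on $\partial\Omega$, up to an additive constant depending only on the listed data. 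So the whole difficulty is pushed to the boundary.

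For the boundary stage, fix $M_0\in\partial\Omega$ and let $\mathbf n^+$ be the interior normal there. The tangential derivatives of $u$ on $\partial\Omega$ are controlled by $\|\varphi\|_{C^1(\partial\Omega)}$ directly, so it remains to bound the normal derivative $u_{\mathbf n}(M_0)$ from both sides. The upper bound $u_{\mathbf n}(M_0)\le c$ is easy: since $u$ is $m$-admissible, $T_1[u]={\rm tr}\,u_{xx}>0$, so $\Delta u>0$ and $u$ is subharmonic; comparing with a harmonic function agreeing with $\varphi$ on $\partial\Omega\cap B_{r_0}(M_0)$ gives the estimate from above with no curvature hypothesis needed. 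The lower bound $-u_{\mathbf n}(M_0)\le c$ is exactly Lemma 6.12: one applies it with $v=u$, noting that $u$ itself is $m$-admissible, hence $F_m[u]=f$, and that the linearized inequality $L[u;u]=F_m^{ij}[u]u_{ij}=m^{-1}T_m[u]^{1/m-1}\cdot m\,T_m[u]=F_m[u]=f\le \mu$ holds with $\mu=\|f\|_{C(\Omega)}$ by $1$-homogeneity of $F_m$ (Euler's identity). This is where the geometric hypothesis $\mathbf k_{m-1}[\partial\Omega]\ge\varepsilon$ enters, through the existence of the $m$-Hessian kernel (Theorem 6.11) and the $1/\varepsilon$-dependence of $|W_n(M_0)|$.

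Combining: on $\partial\Omega$ we get $|u_x|\le c_1(1/\varepsilon,1/\nu,\|u\|_{C(\Omega)},\|f\|_{C^1(\Omega)},\|\partial\Omega\|_{C^2},\|\varphi\|_{C^2(\partial\Omega)})$ by covering $\partial\Omega$ with finitely many neighborhoods $B_{r_0}(M_0)$ and using compactness; then the interior stage propagates this bound to all of $\bar\Omega$, yielding (\ref{esC1}).

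I expect the main obstacle to be the interior reduction, i.e., making rigorous that $\sup_{\bar\Omega}|u_x|$ is controlled by $\sup_{\partial\Omega}|u_x|$ plus admissible data: the naive maximum principle applies to each $u_\xi$ but not directly to $|u_x|$, so one needs an auxiliary function of the form $e^{\alpha u}|u_x|^2$ (or $|u_x|^2 + \lambda u$) and must check that the resulting differential inequality has the right sign using only $L[u_\xi;u]=f_\xi$, ellipticity (\ref{ellip}), and a lower bound on the trace of $(F_m^{ij})$. The boundary piece, by contrast, is essentially packaged already in Lemma 6.12 once one verifies the elementary identity $F_m^{ij}[u]u_{ij}=F_m[u]$.
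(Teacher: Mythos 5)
Your two-stage plan (reduce the global gradient bound to the boundary, then bound the normal derivative at $\partial\Omega$ from above by harmonic comparison and from below by Lemma 6.12 via the $m$-Hessian kernel of Theorem 6.11) is exactly the skeleton of the paper's proof, and your boundary stage is carried out the same way: tangential derivatives from $\varphi$, the upper bound from subharmonicity of $u$ (the paper phrases it through (\ref{macl}) and Lemma 6.5 with $m=1$), and the lower bound from Lemma 6.12 with $v=u$, $\mu=\sup_\Omega f$, using Euler's identity $F_m^{ij}[u]u_{ij}=F_m[u]=f$. This part is fine.

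The genuine gap is the interior reduction, which you leave as an acknowledged ``main obstacle'' and propose to attack with a Bernstein auxiliary function $e^{\alpha u}|u_x|^2$ requiring a lower bound on ${\rm tr}(F_m^{ij}[u])$ — machinery you neither carry out nor need. Your worry that ``the maximum principle applies to each $u_\xi$ but not directly to $|u_x|$'' is misplaced: since $|u_x|(x)=\sup_{|l|=1}u_l(x)$, a bound on $-u_l$ that is uniform in the unit vector $l$ and involves only $\sup_{\partial\Omega}|u_x|$ plus data is exactly a bound on $\sup_{\bar\Omega}|u_x|$. The only real issue is the inhomogeneous right-hand side $f_l$ in $L[u_l;u]=f_l$, and the paper disposes of it with a one-line comparison: take $\mu=\sup_\Omega|f_x|$ and $w=\frac{\mu}{\nu}\,u$; by $1$-homogeneity $F_m[w]=\frac{\mu}{\nu}f\geqslant\mu$, so the pair $v=u_l$, $w=\frac{\mu}{\nu}u$ satisfies (\ref{cps1}) and Lemma 6.5 gives, via (\ref{v-w}),
\begin{equation*}
-u_l(x)\leqslant \frac{\mu}{\nu}\Bigl(\sup_{\Omega}|u|+\sup_{\partial\Omega}|\varphi|\Bigr)+\sup_{\partial\Omega}|u_l|,
\end{equation*}
uniformly in $l$. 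In other words, the solution itself (suitably scaled) serves as the comparison function absorbing $f_l$, so no Bernstein-type quantity, no exponential weight, and no trace estimate for $F_m^{ij}$ are needed; with this observation your outline closes and coincides with the paper's argument.
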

\begin{proof}
Denote
$$\mu=\sup_{\Omega}|f_x|.$$
Choose some vector $l\in\mathbb R^n$, $|l|=1$, and differentiate equation (\ref{Fm}) in the direction $l$:
\begin{equation}(\nabla_x F_m(u_{xx}),l)=F_m^{ij}[u]u_{lij}=(f_x,l)\leqslant\mu,\quad x\in\Omega.\label{dl}\end{equation}
On the other hand, the first inequality in (\ref{C1}) brings out
\begin{equation}F_m[w]\geqslant\mu,\quad w=\frac{\mu}{\nu}u,\quad x\in\Omega.\label{wu}\end{equation}
Relations (\ref{dl}), (\ref{wu}) coincide with inequalities (\ref{cps1}) of Lemma 6.5 with $v=u_l$, which via (\ref{v-w}) produces the estimate of $-u_l$ from above:
$$-u_l(x)\leqslant \frac{\mu}{\nu}\left(\sup_{\Omega}|u|+\sup_{\partial\Omega}|\varphi|\right)+\sup_{\partial\Omega}|u_l|,\quad x\in\Omega.$$
Since this reasoning works for an arbitrary vector $l$, this reduces the estimation of $|u_x|$ in $\bar\Omega$ to this estimate at $\partial\Omega$. In presence of given function $\varphi$, it suffices to find bounds for the normal derivative $u_{\mathbf n}=(u_x,\mathbf n^+[\partial\Omega])$ at $\partial\Omega$:
$$|u_x|\leqslant c\left(\frac{1}{\nu},\;\|u\|_{C(\Omega)},\;\sup_{\partial\Omega}|u_{\mathbf n}|,\;\sup_\Omega|f_x|,\;\|\varphi\|_{C^1(\partial\Omega)}\right).$$
Remind that $\mathbf n^+[\partial\Omega]$ is the interior to $\partial\Omega$ normal.
\begin{enumerate}
\item We start with estimation of $u_{\mathbf n}$ from above. Let $v$ be a harmonic in $\Omega$ function, $v\arrowvert_{\partial\Omega}=\varphi$. Since we are interested in $m$-admissible solutions $u$ to the problem (\ref{Fm}), the inequality (\ref{macl}) is in our possession:
    $$F_1[u]>F_m[u]=f\geqslant\nu>0.$$
    On the other hand
    $$\Delta v=F^{ij}_1[u]v_{ij}=0\leqslant v.$$
    The conditions of Lemma 6.5 are satisfied with $m=1$, $w=u$. Hence, $u\leqslant v$ in $\Omega$. Since $u\arrowvert_{\partial\Omega}=v\arrowvert_{\partial\Omega}$ we conclude that $u_{\mathbf n}\leqslant v_{\mathbf n}$, which gives an estimate from above for $u_{\mathbf n}$.

\item An a priori estimate of $u_{\mathbf n}$ from below is a consequence of Lemma 6.12. Indeed, inequality (\ref{mu}) is valid with  $v=u$, $\mu=\sup_{\Omega}f$:
$$L[u;u]=F^{ij}_m[u]u_{ij}=F_m[u]=f\leqslant\mu.$$
Therefore, the inequality (\ref{vn}) is valid for $v_{\mathbf n}=u_{\mathbf n}$ at an arbitrary point $M_0$ of $(m-1)$-convex surface $\partial\Omega$. This estimate of $u_{\mathbf n}$ from below concludes the proof of inequality (\ref{esC1}).
\end{enumerate}
\end{proof}

\end{document}